\documentclass{elsarticle}
\newcommand*{\mytitle}{Identification of stochastic operators}
\interdisplaylinepenalty=2500
\newcommand{\coloneqq}{=}
\usepackage{amssymb, amsmath, amsthm}
\usepackage{mathtools}
\usepackage[Symbolsmallscale]{upgreek} 
\usepackage{bm} 
\usepackage{mathrsfs}

\usepackage[OT2,OT1]{fontenc}
\DeclareSymbolFont{cyrletters}{OT2}{wncyr}{m}{n}
\DeclareMathSymbol{\Shah}{\mathalpha}{cyrletters}{"78}
 
 \DeclareFontFamily{U} {MnSymbolC}{}
\DeclareFontShape{U}{MnSymbolC}{m}{n}{
  <-6> MnSymbolC5
  <6-7> MnSymbolC6
  <7-8> MnSymbolC7
  <8-9> MnSymbolC8
  <9-10> MnSymbolC9
  <10-12> MnSymbolC10
  <12-> MnSymbolC12}{}
\DeclareSymbolFont{MnSyC} {U} {MnSymbolC}{m}{n}
\DeclareMathSymbol{\mysmallbox}{\mathord}{MnSyC}{"68}
\usepackage{paralist} 
\usepackage{xspace} 
\usepackage{subfig} 
\usepackage{csquotes} 
\usepackage{hyperref}
\hypersetup{
pdfkeywords = {sampling, identification, stochastic, reconstruction, sounding, radar, channel sounding, spreading function, scattering function, time-varying operators},  
pdfauthor = {G\"{o}tz E. Pfander, Pavel Zheltov},  
pdftitle = {\mytitle},
pdfpagelayout = OneColumn, pdfstartview = FitH,  colorlinks, citecolor=blue, linkcolor=blue, urlcolor=blue}
\theoremstyle{plain}
\newtheorem{theorem}{Theorem}[section]

\usepackage{aliascnt} 
\newaliascnt{lemma}{theorem}
\newtheorem{lemma}[lemma]{Lemma}
\aliascntresetthe{lemma}

\newaliascnt{proposition}{theorem}

\aliascntresetthe{proposition}

\newaliascnt{corollary}{theorem}

\aliascntresetthe{corollary}

\usepackage{chngcntr}

\counterwithout{figure}{section}
\theoremstyle{definition}
\newaliascnt{definition}{theorem}
\newtheorem{definition}[definition]{Definition}
\aliascntresetthe{definition}


\newtheorem*{remark}{Remark}
\newcommand*{\R}{\mathbb{R}}
\newcommand*{\N}{\mathbb{N}}
\providecommand{\Z}{\mathbb{Z}}
\newcommand*{\C}{\mathbb{C}}

\renewcommand{\S}{\mathcal{S}} 
\newcommand*{\FT}{\mathcal{F}}

\newcommand*{\Ball}[1]{\mathcal{B}(#1)}

\renewcommand{\d}{\:\mathrm{d}}

\renewcommand{\phi}{\varphi}
\newcommand*{\eps}{\varepsilon} 

\DeclareMathOperator{\supp}{supp}

\DeclareMathOperator{\vol}{vol\,}

\renewcommand{\Vec}{\operatorname{vec}}

\renewcommand*{\implies}{\quad \Rightarrow \quad}

\DeclareMathOperator{\Dom}{Dom}
\DeclareMathOperator{\Sq}{Sqrt}
\DeclareMathOperator{\Covariance}{Cov}
\newcommand*{\Expectation}{\mathbb{E}}
 
\newcommand*{\eqms}{\mathrel{\stackrel{\scriptstyle m.s.}{=}}}
\newcommand*{\leqsim}{\lesssim}

\newcommand*{\geqsim}{\gtrsim}

\newcommand*{\conj}[1]{\overline{#1}}
\newcommand*{\COV}{\Covariance} 
\newcommand*{\EXP}{\Expectation\,}

\DeclarePairedDelimiter\ceiling{\lceil}{\rceil}
\DeclarePairedDelimiter\floor{\lfloor}{\rfloor}
\DeclarePairedDelimiter\abs{\lvert}{\rvert}
\DeclarePairedDelimiter\norm{\lVert}{\rVert}

\DeclarePairedDelimiter\braces{\lbrace}{\rbrace}
\DeclarePairedDelimiter\brackets{\lbrack}{\rbrack}
\DeclarePairedDelimiter\paren{\lparen}{\rparen}
\DeclarePairedDelimiter\ip{\langle}{\rangle}

\DeclarePairedDelimiter\dualp{\langle}{\rangle}
\newcommand*{\Set}[2]{\braces{#1 \text{ such that } #2}}
\newcommand*{\SetBig}[2]{\braces[\Big]{#1 \text{ such that } #2}}

\newcommand*{\E}[1]{\Expectation\braces*{#1}}
\newcommand*{\littleo}[1]{\mathit{o}\paren*{#1}}
\newcommand*{\bigO}[1]{\mathit{O}\paren*{#1}}

\newcommand*{\el}[2][{}]{\ell^{#2}(\Z^{#1}\times \Z^{#1})}
\newcommand*{\eval}{\big\vert}


\newcommand*{\epi}[1]{\:e^{2\pi i #1}}
\newcommand*{\empi}[1]{\:e^{-2\pi i #1}}


\newcommand*{\acorr}{autocorrelation\xspace}

\newcommand*{\nbhd}{neighborhood\xspace} 

\newcommand*{\bupu}{bounded uniform partition of unity\xspace} 
\newcommand*{\nnd}{positive semi-definite\xspace}

\renewcommand{\O}{\Omega}

\newcommand*{\Minf}{{M^\infty}} 
\newcommand*{\Mone}{{M^1}} 
\newcommand*{\Moneone}{\Mone \widehat{\otimes} \Mone} 
\newcommand*{\w}{\omega}

\renewcommand{\k}{{\boldsymbol{\kappa}}}
\newcommand*{\steta}{{\boldsymbol{\eta}}}

\newcommand*{\h}{{\boldsymbol{h}}}
\newcommand*{\OO}{-\O/2,\O/2}
\renewcommand*{\L}{\Z_L}       

\newcommand*{\f}{\boldsymbol{f}}   
   
\renewcommand*{\c}{\boldsymbol{c}}   
\newcommand*{\g}{\boldsymbol{g}}

\newcommand{\tntn}{t,\nu;t'\!,\nu'}


\newcommand*{\OPW}{\operatorname{OPW}}
\newcommand*{\St}{St}
\newcommand*{\StOPW}{\St\!\OPW}
\newcommand*{\SMinf}{\St\Minf}

\newcommand*{\Sl}{\St\ell}
 
\newcommand*{\Slinf}{\Sl^\infty}

\newcommand*{\Id}{\operatorname{Id}} 
\newcommand*{\Hcal}{\mathcal{H}}
\newcommand*{\Mcal}{\mathcal{M}}
 
\newcommand*{\Trans}{T}   %

\newcommand*{\sumZ}[1]{\sum_{{#1}\in\Z}}

\newcommand*{\RR}{\R^2}  
\newcommand*{\rect}{\Box} 

\newcommand*{\KN}{\sigma}
\newcommand*{\kernel}{\kappa}
\newcommand*{\RV}{\operatorname{RV}} 
\newcommand*{\Zak}{\mathcal{Z}}
\renewcommand{\H}{\boldsymbol{H}}


\newcommand*{\tensoratom}[4]{\conj{{#1}_{#2}} \otimes {#3}_{#4} }

\newcommand*{\GG}{\tensoratom{G}{}{G}{}} 

\newcommand*{\RHf}{R_{\scriptscriptstyle \H f}}
\newcommand*{\Reta}{R_{\scriptstyle \steta}}

\newcommand{\klkl}{k,l,k'\!,l'}
\newcommand*{\klmn}{k,l,m,n}
\newcommand*{\klmnp}{k'\!,l'\!,m'\!,n'}
\newcommand*{\MM}{\Mcal \otimes \conj{\Mcal}}  
\newcommand*{\Rf}{R_{\boldsymbol{f}}}
 
\newcommand{\placeholder}{\mathord{\,\cdot\,}} 
\newcommand*{\e}{\boldsymbol{e}} 
\renewcommand*{\P}{\mathscr{P}} 
 
\newcommand*{\M}{\mathcal{M}} 
 
\newcommand*{\stoch}[1]{\bm{\mathcal{#1}}}
 
\newcommand*{\etarect}{\eta_{\mysmallbox}}
 
\newcommand*{\kl}{k,l}
\newcommand*{\klp}{k'\!,l'}
\newcommand*{\klg}{k,l,\gamma}
\newcommand*{\klgp}{k'\!,l'\!,\gamma'}

\newcommand*{\jjp}{{(j,j')}}
\newcommand*{\klmnjj}{k,l,m_\jjp,n_\jjp}
\newcommand*{\klmnjjp}{k'\!,l'\!,m'_\jjp,n'_\jjp}
\newcommand{\mnmn}{m,n,m'\!,n'} 
\newcommand*{\pqpq}{p,q,p'\!,q'}

\newcommand*{\sitau}{\sigma,\tau}
\newcommand*{\sitaup}{\sigma'\!,\tau'} 
\newcommand*{\sitausitau}{\sitau,\sitaup}

\newcommand*{\txtx}{\tau,\xi,\tau'\!,\xi'}
\newcommand*{\SITAU}{\begin{bmatrix} \sigma & \tau & \sigma' & \tau' \end{bmatrix}} 
 
\newcommand*{\PQ}{\begin{bmatrix} p & q & p' & q'\end{bmatrix}} 
\newcommand{\myvec}[1]{\mathrm{#1}}  
\newcommand{\jj}{\myvec{j}}
\newcommand{\kk}{\myvec{k}}
\renewcommand{\ll}{\myvec{l}}
\newcommand{\mm}{\myvec{m}}
\newcommand{\nn}{\myvec{n}}
\newcommand{\pp}{\myvec{p}}
\newcommand{\qq}{\myvec{q}}
\renewcommand{\ss}{\myvec{\upsigma}}
\renewcommand{\tt}{\myvec{\uptau}}
\newcommand{\ff}{\myvec{f}}
\newcommand{\PP}{\myvec{P}}
\renewcommand{\gg}{\myvec{g}}

\newcommand{\Aleph}{J}  
\newcommand{\boldphi}{\boldsymbol{\phi}}
\renewcommand{\phi}{\varphi}
\newcommand{\myindent}{\hspace*{1cm}}

\newcommand{\Eta}{\widetilde{\mathrm{H}}} 
\newcommand{\badH}{\boldsymbol{H}} 
\newcommand{\Gscr}{\mathcal{G}}
\newcommand{\GGscr}{\conj{\mathcal{G}} \otimes \mathcal{G}}

\renewcommand*{\a}{\boldsymbol{a}} 
\newcommand*{\Ra}{R_{\a}} 
\newcommand*{\B}{B}
\newcommand{\Mtilde}{\widetilde{M}}
\newcommand{\Ltilde}{\tilde{\Lambda}}

\newcommand{\size}[1]{\norm{#1}_{\infty}}
\newcommand{\coeff}[1]{(1+\size{#1})}
\newcommand*{\MMM}{M} 
\newcommand{\pambda}{{\lambda'}} 
\newcommand{\Em}[1]{M_{#1}}    
\begin{document}
\date{\today}
\title{\mytitle}
\author[j]{G\"otz~E.~Pfander\fnref{fn1}}
\ead{g.pfander@jacobs-university.de}

\author[j]{Pavel~Zheltov\fnref{fn1}\corref{cor1}}
\ead{p.zheltov@jacobs-university.de}
\address[j]{School of Engineering and Science, Jacobs University Bremen, 28759 Bremen, Germany}
\fntext[fn1]{G.~E.~Pfander and P.~Zheltov acknowledge funding by the Germany Science Foundation (DFG) under Grant 50292 DFG PF-4, Sampling Operators.}
\cortext[cor1]{Corresponding author}
\begin{keyword}
stochastic modulation spaces, generalized stochastic processes, underspread operators, Gabor frame operators, stochastic spreading function, measurements of stochastic channels, time-frequency analysis, defective patterns
\end{keyword}
\fntext[subj]{MSC2010: Primary 42B35, 60G20, 42C40;  Secondary 94A20, 47G10} 
\begin{abstract}
Based on the here developed functional analytic machinery we extend the theory of operator sampling and identification to apply to operators with stochastic spreading functions.
We prove that identification with a delta train signal  is possible for a large class of stochastic operators that have the property that the autocorrelation of the spreading function is supported on a set of 4D volume less than one and this support set does not have a defective structure.  
In fact, unlike in the case of deterministic operator identification, the geometry of the support set has a significant impact on the identifiability of the considered operator class. 
Also, we prove that, analogous to the deterministic case, the restriction of the 4D volume of a support set to be less or equal to one is necessary for identifiability of a stochastic operator class. 
\end{abstract}
\maketitle

\listoffigures
\section{Introduction}\label{sec:intro} 
In the fields of wireless communication and radar and sonar acquisition, a sounding signal that is known both to the sender and to the observer is sent into the channel in order to determine the characteristics of the channel from the received echo. Similarly, in control theory, the problem of identifying a system from the output to a given input is called system identification. 

Both deterministic and stochastic operator identification have their roots in the works of Kailath \cite{Kailath} and Bello \cite{BelloMeas}. They suggested a criterion for identifiability based on the \emph{spread} of the operator, defined as the area of the support of the spreading function. In \cite{KozPfa, PfaWal} the criterion that it is necessary and sufficient that the spread must be less than one for a deterministic operator to be identifiable has been theoretically verified and justified, giving new life to this engineering dogma. 
The universal boundary of \emph{one} for the spread of the deterministic operator to be identifiable is closely related to the Heisenberg uncertainty principle of quantum mechanics. This connection is evident in the time-frequency analysis nature of the proofs given in \cite{KozPfa, PfaWal}, as these rely on the representation theory of the Weyl-Heisenberg group. 

The utility of weighted delta trains as a theoretical tool for the study of identification and sampling theory stems from their position as infinite bandwidth unbounded temporal support sounding signals. 
Recently, more practical identifier signals for a class of channels with a parametric model on the channel structure have been discovered \cite{Bajwa, KraPfa}. In another development, it was shown that a stiff requirement to know the support of the spreading function (precisely the set whose area must be less than one) prior to sounding can be removed by using compressed sensing techniques \cite{HB, PfaWalPreprint}.

Here, we continue to rely on weighted delta trains as identifiers to develop a parallel theory of identification of operators with stochastic spreading functions. 
Since stochastic operators include deterministic operators as special case, it is tenable to suppose that in some form the restriction on the spread to be less than one retains its relevance. However the rules of the game change, as the object to recover, the spreading function, belongs to a much larger class of objects. 
The common strategy to circumvent these difficulties is to decrease the complexity of the channel by requiring the spreading function to have a degenerate form, stationarity in both the time and frequency variables, which corresponds to a WSSUS channel. The study is hereby reduced to the recovery of the so-called scattering function, a deterministic function in two variables, defined below in \eqref{eq:scattering}.
Even in this simplified setting, the communication engineering literature still seems to accept the insight of Bello that the area of the support of the scattering function is a necessary requirement for the identifiability of the operator \cite{BelloMeas}. In fact, even this characterization of the fading properties of a channel is discarded in favor of a simpler yet \emph{spread  factor} given by the area of the minimum rectangle that encompasses the support of the spreading function (and hence, scattering function) in the time-frequency plane. This rule of thumb is perpetuated in the classical books as recent as the monograph of Proakis \cite{Proakis}. 

In \cite{OPZ}, we argue that as in the deterministic case, the condition for the spread factor to be less than one is sufficient in the case of identifiable WSSUS channels, and establish the direct applicability of time-frequency analysis techniques of Kozek, Pfander and Walnut in this simplified stochastic setting. 
In \cite{PfaZh02}, we assume functional analytic results proven here and give a detailed analysis of the general case of stochastic operator sampling with a fully stochastic spreading function. The work in \cite{PfaZh02} extends sampling results for operators and discusses their applications.  A surprising corollary shows that using weighted delta trains as identifiers for WSSUS channels allows the recovery of the scattering function from the autocorrelation of the received signal irrespectively of the area of the support of the scattering function. 

Here, we settle the question of the identifiability of stochastic operators with a general necessary and sufficient \autoref{thm:Phist.is.MM}. It turns out that the volume of the set $M$, the support of the stochastic spreading function, alone is not enough for identifiability of the corresponding operator. The geometry of the set $M$ plays an important role for the possibility of identification. \\[1ex]

The popular and herein used model for channels and linear time-variant (LTV) systems is 
\begin{equation}\label{eq:Hf}
(H f)(x) = \iint \eta(t,\nu)  \: M_\nu  T_t \:  f(x) \d t \d\nu,
\end{equation}
where $T_t$ is a \emph{time-shift} by $t$, that is, $T_t f(x) = f(x-t)$, $t\in\R$, and $M_\nu$ is a \emph{frequency shift} or \emph{modulation} given by $M_\nu f(x) = \epi{\nu x}\, f(x)$, $\nu\in \R$. Taking Fourier transforms, it follows that $\widehat{M_\nu f} (\xi) = \widehat{f}(\xi-\nu) = T_\nu \widehat{f}(\xi)$ for all $\xi\in \R$. The function $\eta(t,\nu)$ is called the \emph{(Doppler-delay) spreading function} of $H$.
Classically, the domain and codomain of $H$ are taken to be the Lebesgue space of square integrable functions $L^2(\R)$ or the discrete finite dimensional space $\C_L$. More generally, $f(x)$, $\eta(t,\nu)$ and $g(x) = H f(x)$ can be elements in spaces of generalized functions, such as the space of tempered distributions $\S'(\R^d) $, the continuous dual of the space $\S(\R^d) $ of infinitely differentiable rapidly decaying functions. 

It is common that models of wireless channels and radar environments  take the stochastic nature of the medium into account. In such models, the spreading function $\steta(t,\nu)$ or the sounding signal $\f(x)$ in \eqref{eq:Hf}, or both, are random processes (that will henceforth be denoted by boldface letters) such that, for example, every sample of the spreading function $\steta(t,\nu; \w)$ belongs to one of the spaces mentioned above. 
In this paper we consider only the spreading function to be stochastic, leaving the sounding signal completely deterministic. 

Usually, the operator is split into the sum of its deterministic portion, representing the mean behavior of the channel, and its zero-mean stochastic portion that represents the noise and the environment. 
We assume that this decomposition has already taken place and focus on operators with purely stochastic zero-mean spreading functions. For a treatment of the deterministic part, we refer to \cite{KozPfa, PfaWal, Pfander}. 

The statistic that presents the most interest in this setting is the \emph{autocorrelation} of the spreading function
\[ 
\Reta(\tntn) = \E{\steta(t,\nu)\, \conj{\steta(t',\nu')}},
\]
and we will pursue the goal of determining $\Reta$ from $\RHf$, that is, $\Reta$ from the autocorrelation of the stochastic channel output $\H f$ (see \autoref{defn:id.3}). 
The time-varying case most studied in the literature assumes the special form 
\begin{equation}\label{eq:scattering}
\Reta(\tntn) = C_\steta (t,\nu) \, \delta(t-t') \, \delta(\nu-\nu'). 
\end{equation}
Such operators are referred to as \emph{wide-sense stationary operators with uncorrelated scattering}, or WSSUS. The function $C_\steta(t,\nu)$ is then called \emph{scattering function} \cite{BelloChar, VanTrees, OPZ}. Our results do not presuppose the stationarity of $\H$, instead, they include it as an interesting special case.
\begin{figure}[ht]

\centering
\def\myscale{0.33}
\subfloat[Tensor square]{ 
\includegraphics{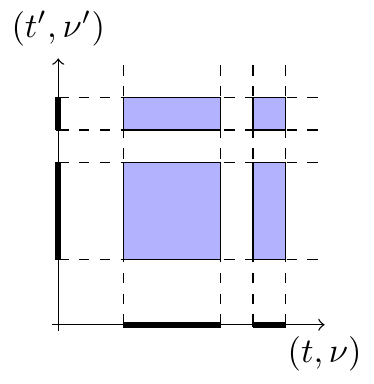}
} 
\subfloat[Arbitrary]{ 
\includegraphics{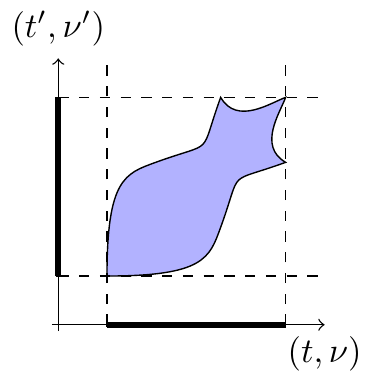}
}
\subfloat[WSSUS]{ 
\includegraphics{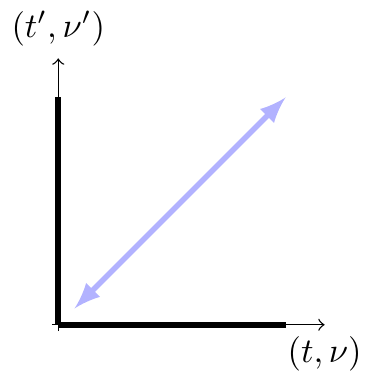}
} 
\caption{Types of distributional support sets of autocorrelations of spreading functions.}
\end{figure}

Under the a priori assumption that the operator $H\colon X\to Y$ belongs to class of linear operators $\Hcal$, identification of the operator from the received echo $H f$ is possible only if for some sounding signal $f$ the mapping $\Phi_f\colon H \mapsto H f$ is injective, 
\begin{equation*}\label{eq:1-1}
 H_1 f = H_2 f \quad \Rightarrow \quad H_1 = H_2.
\end{equation*}
 For linear mappings between Banach spaces to \emph{identify} $\Hcal$ in a \emph{stable} way, we require $\Phi_f$ and its inverse to be bounded:
 \begin{equation}\label{eq:norm.ineq}
A\norm{H_1-H_2}_{\Hcal} \leq \norm{ H_1 f - H_2 f}_Y \leq  B\norm{H_1 - H_2}_{\Hcal} \quad \text{ for all } H_1, H_2\in \Hcal.
\end{equation}
 In those cases when $\Hcal$ is closed under addition, this is equivalent to 
\begin{equation}\label{eq:id.2}
 A\norm{H}_\Hcal \leq \norm{H f}_Y \leq B \norm{H}_\Hcal  \quad \text{ for all } H \in \Hcal
\end{equation}
for some $A,B>0$. The choice of Banach space norms on $Y$ and $\Hcal$  is thus important.  

In the following, we use the symbol $\lesssim$ to abbreviate inequalities up to a constant, that is, $f(t) \lesssim g(t)$ if there exists $C>0$ such that $f(t) \leq C \, g(t)$ for all $t$. Additionally, $f\asymp g$ means $f\lesssim g$ and $f \geqsim g$. 

One major contribution of this paper is to design $Y$ and $\Hcal$ for the identification inequality \eqref{eq:norm.ineq} to hold for stochastic operators accommodating  generalized functions as input signals and generalized stochastic processes as spreading functions. 

In \autoref{thm:sufficiency} we obtain direct generalization of results \cite{PfaWal, KozPfa, Pfander} that deal with deterministic channel operators.  
Cited results show that delta trains as sounding signals identify a large range of so called \emph{operator Paley-Wiener spaces} from the echo $H f$ as  discussed in \autoref{sec:samp_det_op} below. 
For the case of stochastic operator identification, the reconstruction formulas for the \acorr of the spreading function $\Reta$ given the \acorr of the stochastic channel output $\RHf$ can be found in \cite{PfaZh02}, a paper that also addresses sampling theory connection of stochastic operator identification and practical aspects of the proposed technique. 
This paper provides mathematical backbone of the theory developed. 

The possibility of deterministic operator identification by delta trains is linked  to the invertibility of a submatrix of a finite dimensional Gabor system that corresponds to the geometry of the support set \cite{KozPfa}. 
A finite-dimensional Gabor frame is defined as 
\begin{equation*}\label{eq:Gabor.frame}
G \coloneqq \{ M^l \, T^k \, c \}^{L-1}_{k,l=0,}
\end{equation*}
where the finite-dimensional translation operators $T^k$ and modulation operators $M^l$ operating on a vector $c\in \C^L$ are given by 
\begin{equation*}\label{eq:define.fin.dim.TM}
(T^k c)_p = c_{p-k} \quad \text{ and } \quad (M^l c)_p \coloneqq \epi{l p / L} c_p.
\end{equation*}

We extend the same technique to stochastic operators. 
We find that stochastic operator identification using delta trains is still possible if the finite-dimen\-sion\-al matrix $\GG\eval_\Lambda$ is invertible, where the index set $\Lambda$ is now induced by the geometry of the support set of the autocorrelation of the spreading function $\supp \Reta(\tntn)$. 
We will show that analogous to the deterministic case, from the dimensionality considerations it is evident that the restriction on $\supp \Reta(\tntn)$ to have 4D volume less than the critical volume one is still necessary for delta train identification. 
In addition, we show that this constraint to volume less than one also holds if we replace the delta train by any distribution in the considered herein modulation spaces. 

In \cite{PfaZh01} we developed the reconstruction formulas for the stochastic case and discussed more applied aspects of the technique.  
We discovered that in the case of stochastic operator identification, there exist defective configurations of the said support (which we call \emph{patterns}) that prevent successful identification by delta trains.  
Moreover, this effect persists regardless of level of refinement of the time-frequency grid (that is, of the parameters $a, b$ in \autoref{defn:S.is.rectified}). 

In this paper, we also continue the discussion of defective patterns. It turns out that the two families of defective patterns 
that were determined in \cite{PfaZh02} to define operator classes that cannot be identified by weighted delta trains $\Shah_c$, are, in fact, \emph{globally defective} in a sense that \emph{no} sounding signal $f$ identifies them.  

The paper is structured as follows. 
In \autoref{sec:samp_det_op}, we give the well established definitions and theorems from the theory of deterministic operator identification. 
In \autoref{sec:stoch_mod_spaces} we define the Banach spaces $\StOPW(M)$ of stochastic operators that are natural choices to request identifiability of, and develop the functional analysis tools to successfully deal with them, see, in particular, \autoref{thm:H_extension}, \autoref{thm:sufficiency} and \autoref{thm:necessity}.
In \autoref{sec:sufficiency} and  \autoref{sec:necessity} we describe sufficient and necessary conditions on $M$ for the identification of operators in $\StOPW(M)$.
In \autoref{sec:defective.patterns} we introduce criteria for defective patterns. We determine two families of  defective patterns that give rise to support sets $M$ such that $\StOPW(M)$ is impossible to identify any sounding signal $f$. 
In \autoref{sec:bi-infinite} we give a generalized version of the Theorem 2.1 in \cite{Pfa05}, a result needed to derive theorems in the other sections. 
\subsection{Sampling and identification of deterministic operators}\label{sec:samp_det_op}
Equivalently to \eqref{eq:Hf}, any operator $H$ acting on one-variable signals can be represented in a weak sense by 
\begin{enumerate}[(i)]
\item its \emph{time-varying impulse response} $h(x,t) = \int \eta(t,\nu)\epi{\nu x} \d \nu $, then 
\begin{align*}
 (H f)(x) &= \int h(x,t)\, f(x-t) \d t,\\
\shortintertext{
\item its \emph{kernel} $\kernel(x,y) = h(x,x-y) = \int \eta(x-y,\nu) \epi{\nu x} \d \nu,$ then 
} 
(H f)(x) &= \int \kernel(x,y)\, f(y) \d y,\\
\shortintertext{
\item and its \emph{Kohn-Nirenberg symbol} $\KN(x,\xi) \coloneqq \FT_s\eta(t,\nu)$, then 
}
(H f)(x) &= \int \KN(x,\xi) \, \hat{f}(\xi) \, \epi{x\xi} \d\xi.
\end{align*}
\end{enumerate}
In the last equation, the \emph{symplectic Fourier transform} is given by
\[ \left(\FT_s \eta\right)(x,\xi) = \iint \eta(t,\nu)\, \epi{(\nu x - \xi t)} \d t \d\nu.\]

The spreading function $\eta(t,\nu)$ enjoys a particularly simple relationship to the \emph{short-time Fourier transform} (STFT) $V_\phi \colon \S'(\R^d) \to \S'(\R^d)$, defined as 
\begin{equation}\label{eq:STFT}
V_{\phi} f(t,\nu) \coloneqq \ip{f, M_\nu T_t\, \phi}
\end{equation}
for any window $\phi\in \S(\R^d) $.
Note first that the STFT is symmetric with respect to $f$ and $\phi$ up to a unitary phase factor, that is, 
\begin{equation*}\label{eq:V.sym}
\ip{f, M_\nu T_t \phi} = \empi{\nu t}\ip{\phi, M_{-\nu} T_{-t} f}. 
\end{equation*}
For all $f,\phi\in\S(\R^d) $ we then have the useful equality 
\begin{equation*}\label{eq:eta.vs.V}
\ip{Hf,\phi} = \ip{\eta, V_f \phi}. 
\end{equation*}
The inner product $\ip{\placeholder,\placeholder}$ and the dual pairing $\ip{\placeholder, \placeholder}$ in case of distributions are taken to be conjugate linear in the second component. 

We will say that $H$ belongs to an \emph{operator Paley-Wiener space} if the support of the spreading function $\eta(t,\nu) =  \FT_{s} \KN(x,\xi)$ is contained in a subset $S$ of the $(t,\nu)$-time-frequency plane, usually taken to be compact,
\begin{equation*}
\OPW(S) \coloneqq \Set{H \colon L^2(\R) \to L^2(\R)}{\KN\in L^2(\R^2), \quad \supp \FT_s\KN \subseteq S }.
\end{equation*}
The space $\OPW(S)$ becomes a Banach space given a Hilbert-Schmidt norm $\norm{H}_{\OPW} = \norm{\eta_H}_{L^2(\R^2)}$.
Colloquially, we refer to $H$ as \emph{\enquote{bandlimited}} to $S$. 
Based on ideas from \cite{Kailath, BelloMeas}, first results in operator identification were obtained for deterministic operators with spreading function supported on a rectangle of area one in the time-frequency plane \cite{KozPfa}. 
Later it was proven that identification is possible for a larger class of operators with spreading functions supported on a fixed bounded set of arbitrary shape, not just a rectangle, as long as the area of it is less than 1 \cite{PfaWal}. 
To obtain general identification results, it is necessary to \emph{rectify} the support set $S$ as it is done in the theory of Jordan domains. 
\begin{definition}
\label{defn:S.is.rectified}
The set $S$ is \emph{$(a,b,\Lambda)$-rectified} if it can be covered by $L=\frac{1}{ab}$ translations $\Lambda \coloneqq \{(k_j, l_j)\}_{j=0}^{L-1}$ of the rectangle $\rect \coloneqq [0,a)\times[0,b)$ along the lattice $a\Z\times b\Z$: 
\[ 
S \subset \bigcup_{j=0}^{L-1} \rect+(a k_j, b l_j).
\]
\end{definition}

For compact sets the Lebesgue measure agrees with the outer (respectively, inner) Jordan content denoted henceforth $\vol^{+}$  (respectively, $\vol^{-}$). 
\begin{lemma}
\label{lem:cover_det}
For any compact set $S$ of measure $\mu(S)<1$ there exist $a,b>0$ and $\Lambda$ such that $S$ is $(a,b,\Lambda)$-rectified, $L = \frac{1}{ab}$ is prime, and $S \subseteq [0,T] \times[\OO]$, where $Tb=1$ and $\O a = 1$. 
\end{lemma}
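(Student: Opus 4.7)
The plan is to combine outer regularity of Lebesgue measure, a compactness argument, and the infinitude of primes to meet all three constraints (rectifiability, primality of $L$, containment in a concrete rectangle) simultaneously.

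First, since $S$ is compact I may translate it so that $S \subseteq [0, T_0] \times [-\Omega_0/2, \Omega_0/2]$ for some $T_0, \Omega_0 > 0$. Because $\mu(S) < 1$, outer regularity of Lebesgue measure supplies a bounded open set $U \supseteq S$ with $\mu(U) < 1$. By compactness of $S$ and openness of $U$, there exists $\delta > 0$ such that the $\delta$-fattening $\{x \in \R^2 : \dist(x,S) < \delta\}$ is contained in $U$.

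Next, I would select a prime $L$ large enough that one can choose $a,b>0$ with $ab = 1/L$ obeying the three side conditions (i) $a \leq 1/\Omega_0$, (ii) $b \leq 1/T_0$, and (iii) $\sqrt{a^2+b^2} < \delta$. Concretely, I fix $a = \min\{1/\Omega_0,\, \delta/\sqrt{2}\}$ and then set $b = 1/(aL)$; then $b \leq 1/T_0$ and $b < \delta/\sqrt{2}$ both hold as soon as $L$ exceeds a constant depending only on $T_0,\Omega_0,\delta$. Since there are infinitely many primes, a prime $L$ in this range exists. Putting $T = 1/b$ and $\Omega = 1/a$ then forces $T \geq T_0$ and $\Omega \geq \Omega_0$, so $S \subseteq [0,T] \times [-\Omega/2,\Omega/2]$ as required.

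It remains to extract $\Lambda$. Let
\[
\Lambda_0 = \{(k,l)\in \Z^2 : S \cap \bigl([ak,a(k+1))\times[bl,b(l+1))\bigr) \neq \emptyset\}
\]
index the lattice cells meeting $S$. Each such cell has diameter $\sqrt{a^2+b^2} < \delta$, so it lies inside the $\delta$-fattening of $S$ and hence inside $U$. The cells in $\Lambda_0$ are pairwise disjoint, so $|\Lambda_0|\cdot ab \leq \mu(U) < 1 = L\cdot ab$, which gives $|\Lambda_0| < L$. I pad $\Lambda_0$ with arbitrary lattice points to obtain a $\Lambda$ of cardinality exactly $L$; then by construction $S \subseteq \bigcup_{(k,l)\in\Lambda}\rect + (ak,bl)$, so $S$ is $(a,b,\Lambda)$-rectified.

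The main obstacle is the simultaneous juggling of the three constraints: $L$ must be a prime integer (a number-theoretic demand), the reciprocal pair $T=1/b$, $\Omega=1/a$ must accommodate the aspect of the bounding rectangle for $S$, and the cell diameter must be small enough that the cells meeting $S$ still lie in $U$. The infinitude of primes supplies just enough slack to let a single choice of $(L,a,b)$ satisfy all three.
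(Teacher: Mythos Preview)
Your argument is correct and is essentially the standard outer-Jordan-content argument that the paper invokes by citing Folland: cover $S$ by the grid cells it meets, make the mesh fine enough that those cells still sit inside an open superset of measure less than one, and then count. The primality and padding step you spell out is exactly what the paper has in mind when it says the prime requirement ``can always be achieved at the expense of further reduction of parameters $a,b$ and by allowing the inclusion of some rectangles that do not intersect $S$.''
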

\begin{proof}
A proof without the requirement that $L$ must be prime can be found in \cite{Folland}. 
It is easy to see that requiring $L$ to be prime does not lose generality and can always be achieved at the expense of further reduction of parameters $a,b$ and by allowing the inclusion of some rectangles that do not intersect $S$. 
\end{proof}

\begin{figure}[ht]
\centering
\includegraphics{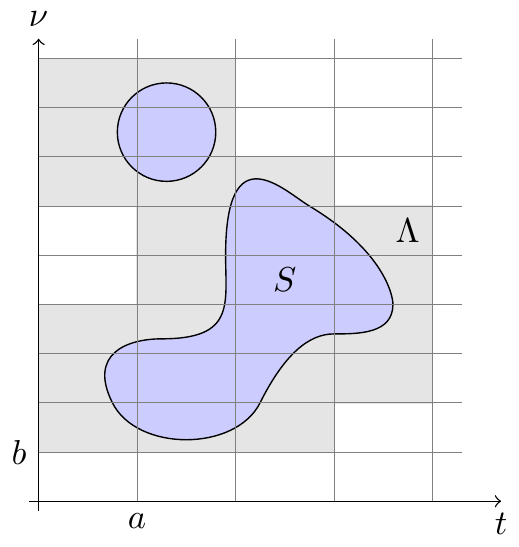}
\caption{An $(a,b,\Lambda)$-rectified support set of a deterministic spreading function in the time-frequency plane\label{fig:rect}. }
\end{figure}

\begin{definition}\label{defn:bupu}
For $a, \eps>0$ we say $r(t)$ generates an \emph{$(a,\eps)$-partition of unity} $\{ r(t+ak) \}_{k\in \Z}$ if
\[ 
\supp r(t) \subseteq (-\eps, a+\eps),\quad \text{ and } \quad \sum\limits_{k\in\Z} r(t+ak) = 1.
\]
An $((a,b),\eps)$-partition of unity of the two-dimensional plane is defined similarly.  If, in addition, the function $r(t)$ belongs to the space $A_c$, then we say that $r(t)$ forms a \emph{bounded uniform partition of unity} \cite{FeiGro92}. Here, $A = \FT L^1(\R)$ is a Fourier image of a space of Lebesgue integrable functions with norm  $\norm{r}_A = \norm{\hat{r}}_{L^1}$, and the subscripted $A_c$ is the subspace of functions in $A$ with compact support.
\end{definition}

\begin{theorem}\cite[Theorem 1.7]{PfaWal}, \cite{PfaWalPreprint} \label{thm:det}
Let $S\subset \RR$ be a compact set with measure $\mu(S)< 1$ such that $S$ is $(a,b,\Lambda)$-rectified in the sense of \autoref{defn:S.is.rectified}, and $S\subseteq [0,\frac{1}{b}] \times [-\frac{1}{2a}, \frac{1}{2a}]$. Then there exists a vector $c\in \C^L$ so that
\[
\Shah_c=\sum_n c_{n \bmod L} \: \delta_{n/L}
\]
identifies $\OPW(S)$ in a sense of \eqref{eq:id.2}, and for any $H\in \OPW(S)$ 
\begin{multline*}
h(x+t,t) = aL\sum_{j=0}^{L-1} \sum_{q\in \Z} a_{j,q} \: H\Shah_c (x-a(k_j+q)) \\
\times r(x-a k_j)\:  \phi(t+a(k_j+q))\epi{b n_j t}
\end{multline*}
unconditionally in $L^2(\R^2)$. 
Here, the coefficients $a_{j,k}$ are uniquely determined by the choice of $\{c_n\}$, and $r(t)$, $\phi(t)$ are functions that are $(a,\eps)$- (resp., $(b,\eps)$-) partitions of unity in time (respectively, frequency) domains, with $\eps>0$ dependent on $S$. 
\end{theorem}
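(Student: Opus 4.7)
The plan is to reduce identification to a finite-dimensional Gabor inversion and then globalize via bounded uniform partitions of unity (bupus).

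First, I would evaluate $(H\Shah_c)(x)$ directly. Since $(Hf)(x)=\iint \eta(t,\nu)\epi{\nu x} f(x-t)\, \d t\, \d\nu$ and $\Shah_c=\sum_n c_n \delta_{n/L}$, one obtains
\[
(H\Shah_c)(x) = \sum_n c_n \int \eta(x-n/L,\nu)\epi{\nu x}\, \d\nu = \sum_n c_n\, h(x,x-n/L).
\]
Using the $L$-periodicity of $c$ together with the support hypothesis $\supp \eta \subseteq \bigcup_{j} \rect+(ak_j,bl_j)$ and the normalization $S\subseteq[0,1/b]\times[-\O/2,\O/2]$, I would periodize in the delay variable and apply a discrete Fourier inversion in the Doppler variable. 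This recasts the samples $\{(H\Shah_c)(x-a(k_j+q))\}_{j,q}$, indexed by $j\in\{0,\dots,L-1\}$ and $q\in\Z$, as the image of a vector of values of $h(x+t,t)$ on an $(a,b)$-grid under the $L\times L$ finite Gabor matrix $G=(M^l T^k c)_{k,l=0,\dots,L-1}$ restricted to the $L$ rows indexed by $\Lambda$.

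Next, I would choose $c\in\C^L$ so that $G\vert_\Lambda$ is invertible. Because $L$ is prime, Chebotarev's theorem on the minors of the finite Fourier matrix implies that a generic $c$, e.g.\ one whose discrete Fourier transform has no vanishing entry, makes \emph{every} $L\times L$ $\Lambda$-submatrix of $G$ non-singular, with a condition number depending only on $L$ and $c$. Inverting this system produces the coefficients $a_{j,q}$ in the statement and thereby recovers the $(a,b)$-grid samples of $h(x+t,t)$ from the samples of $H\Shah_c$ along the arithmetic progression $\{a(k_j+q)\}_q$.

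Finally, to recover the full function $h(x+t,t)$ from its lattice samples I would use bupus: let $r\in A_c$ be an $(a,\eps)$-partition of unity in the $x$-variable and $\phi\in A_c$ a $(b,\eps)$-partition of unity in the $t$-variable. Because the rectification forces $h(x+t,t)$ to be band-limited in $t$ to $[-\O/2,\O/2]$ and compactly supported in the first variable, a tensor bupu sampling theorem in the spirit of Feichtinger--Gr\"ochenig combines the Gabor-inverted samples into precisely the displayed expansion, with the phase $\epi{b n_j t}$ accounting for the Doppler modulation assigned to the $j$-th covering rectangle. Unconditional $L^2$ convergence and the two-sided bound \eqref{eq:id.2} follow from Bessel estimates for the reconstructing system combined with the uniform bound on $\|G\vert_\Lambda^{-1}\|$.

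The main obstacle is the bookkeeping in the periodization: one must verify that the samples $\{h(x,x-n/L)\}_n$ collapse, after $L$-periodization in $n$, into exactly the $\Lambda$-indexed vector whose coupling matrix is $G$ rather than some larger or skew-indexed variant. A subtler point is the passage from pointwise recovery to unconditional $L^2$ convergence, which depends on the Bessel property of the family $\{r(\cdot-ak_j)\,\phi(\cdot+a(k_j+q))\,\epi{b n_j \cdot}\}$ in the appropriate modulation-space norm; this is precisely what $A_c$ buys us. Primality of $L$ is essential to keep the stability constant $A$ in \eqref{eq:id.2} independent of the fine-scale position of $S$ inside the $(a,b)$-cover.
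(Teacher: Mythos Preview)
The paper does not prove this theorem; it is quoted from \cite{PfaWal, PfaWalPreprint} as background, with no argument given here. So there is no ``paper's own proof'' to compare against, and your sketch is in fact a reasonable outline of the strategy used in those cited references: compute $H\Shah_c$ as a sum of impulse-response samples, recognize the finite Gabor matrix $G$ acting on the rectified grid values, invert $G|_\Lambda$, and globalize with partitions of unity.

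Two technical points in your outline are off. First, the invertibility of every $L\times L$ submatrix of the Gabor matrix for prime $L$ is not Chebotarev's theorem (which concerns minors of the DFT matrix); one needs the Lawrence--Pfander--Walnut result that for prime $L$ a generic $c$ gives the full Gabor system $\{M^l T^k c\}$ the Haar property. Chebotarev is an ingredient in that proof, not the statement itself. Second, you describe $\phi$ as a $(b,\eps)$-partition of unity ``in the $t$-variable,'' but the theorem specifies it is a partition of unity in the \emph{frequency} domain; the reconstruction in the Doppler direction is a bandlimited-sampling expansion (with $\widehat\phi$ playing the role of the synthesis window), not a spatial bupu in $t$. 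This affects both the form of the modulation factor $\epi{bn_j t}$ and the $L^2$-convergence argument.
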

\autoref{thm:det} holds not only if $S$ is compact, but for all regions $S$ whose Jordan outer content is less than one \cite{PfaWal}.

\section{Stochastic modulation and stochastic Paley--Wiener spaces}\label{sec:stoch_mod_spaces}
Let $(\Omega, \mathbb{P}, \Sigma)$ be a probability space, and denote $\RV(\Omega)$ the space of zero-mean complex-valued random variables $r(\w)\colon \Omega \to \C$. 
A general treatment of generalized functions as mappings $\S(\R^d)\to \C$ and of generalized random processes as mappings $\S(\R^d) \to \RV(\Omega)$ dates back to \cite{GelVil}. 
However, for our purpose it is more convenient to deal with Banach spaces as domains and norms instead of Frechet spaces and infinite families of seminorms. We will thus restrict our attention to stochastic versions of the modulation spaces $\Mone$ and $\Minf$.  

\begin{definition}
Let $\phi \in \S(\R^d)$ be a fixed non-zero window, and $1\leq p,q \leq \infty$. Then the modulation spaces $M^p(\R^d)$ consist of all tempered distributions $f\in \S'(\R^d)$ such that the short-time Fourier transform belongs to the Lebesgue space $L^p(\R^{2d})$, that is, 
\[ 
M^p(\R^d) = \Set{f\in \S(\R^d)}{\norm{f}_{M^p(\R^d)} = \paren*{\int\abs{V_\phi f(t,\nu)}^p \d t \d \nu }^{1/p} < \infty}.
\]
$M^p(\R^d)$ is a Banach space; changing $\phi$ leads to equivalent norms. Moreover, $\Minf= (\Mone)'$, where $\Minf$ is defined by replacing the integral by taking the supremum over all $(t,\nu)\in \R^2$. 
\end{definition}
We will use the following property of the space of functions $\Mone(\R^d)$ \cite{FeiGro92}
\[
\Mone(\R^d)\, \widehat{\otimes}\, \Mone(\R^d)=\Mone(\R^{2d}). 
\]
Here, the projective tensor product space $\Mone(\R^d)\, \widehat{\otimes}\, \Mone(\R^d)$ is defined as 
\begin{equation}\label{eq:projective.product}\begin{multlined}
\Mone(\R^d)\, \widehat{\otimes}\, \Mone(\R^d) \coloneqq \Biggl\{ f\in L^1(\R^{2d})\colon f(x,x') \stackrel{L^1}{=} \sum_{n=1}^{\infty} f_n(x) g_n(x') \text{ and }\\
\norm{f}_{\Mone\widehat{\otimes}\Mone} = \inf_{f_n, g_n} \sum_{n=1}^\infty \norm{f_n}_{\Mone} \norm{g_n}_{\Mone} < \infty \Biggr\}.
\end{multlined}\end{equation}

We consider generalized stochastic processes as mappings from $\Mone(\R^d)$ to $\RV(\Omega)$ with autocorrelations in $\Minf(\R^{2d})$, an idea first presented in \cite{FeiHor} and developed further independently of our work in \cite{Wahlberg}. 

We denote by $\Ball{X}$ the unit ball in the normed space $X$. 
\begin{definition}
A \emph{generalized stochastic process} $\f$ on $\R^d$ is a bounded linear map $\Mone(\R^d)\to\RV(\Omega)$. The space of all generalized stochastic processes consists of  the equivalence classes of maps $\Mone(\R^d)\to \RV(\Omega)$ with the usual operator norm 
\[ 
\norm{\f}_{\SMinf(\R^d)} = \sup_{\phi \in \Ball{\Mone(\R^d)}} \norm{\f(\phi)}_{\RV(\Omega).}
\]
It is denoted $\SMinf(\R^d)$; the $\St$ in $\SMinf$ stands for \enquote{stochastic}. 
\end{definition}
\begin{definition}\label{defn:crosscorr}
Let $\f,\g\in \SMinf(\R^d)$. The  \emph{cross-correlation} distribution 
\[ 
R_{\f \g}\colon \Mone(\R^d)\, \widehat{\otimes}\, \Mone(\R^d) \to \C \text{ for all  } \phi,\psi\in\Mone(\R^d).
\]
is defined by 
\[ 
R_{\f \g} (\phi,\conj{\psi}) \coloneqq \ip*{R_{\f \g}, \phi\otimes \conj{\psi}} \coloneqq \ip{\f(\phi), \g(\psi)}_{\RV} = \E{\f(\phi)\, \conj{\g(\psi)}} 
\]
for rank-one tensors $\phi\otimes \conj{\psi}$ and extended in a linear way to finite sums. 
Consider $\theta = \sum_{i=1}^{n} \phi_i \otimes \psi_i$, 
\begin{multline*}	
\abs{R_{\f \g}(\theta)}
= \abs{R_{\f \g} (\sum_{i=1}^{n} \phi_i \otimes \psi_i)} 
=\abs*{\sum_{i=1}^{n} \ip{\f(\phi_i), \g(\psi_i)}_{\RV} } \\
 \leq \sum_{i=1}^{n} \norm{\f(\phi_i)}_{\RV} \norm{\g(\psi_i)}_{\RV} 
\leq \norm{\f}_{\SMinf} \norm{\g}_{\SMinf} \sum_{i=1}^{n} \norm{\phi_i}_{\Mone} \norm{\psi_i}_{\Mone.}
\end{multline*}
Since this holds true for any tensor representation of $\theta$, it follows that 
\[
\abs*{R_{\f \g}(\theta)} \leq  \norm{\f}_{\SMinf}\norm{\g}_{\SMinf} \norm{\theta}_{\Moneone}
\]
for all such $\theta$. We then extend $R_{\f\g}$ to the whole of $\Mone(\R^d) \widehat{\otimes} \Mone(\R^d) = \Mone(\R^{2d})$ by continuity. 
The cross-correlation $\Rf = R_{\f \f}$ of a generalized stochastic process $\f$ with itself is its \emph{autocorrelation}. 
\end{definition}

\begin{lemma}\label{lem:sm}
For all $\f\in \SMinf(\R^d)$, 
$\norm{\f}_{\SMinf(\R^d)}^2 \asymp \norm{\Rf}_{\Minf(\R^{2d}).}$
\end{lemma}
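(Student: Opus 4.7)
The plan is to prove the two inequalities $\norm{\Rf}_{\Minf} \lesssim \norm{\f}_{\SMinf}^2$ and $\norm{\f}_{\SMinf}^2 \lesssim \norm{\Rf}_{\Minf}$ separately. The first of these is already essentially contained in the derivation given inside \autoref{defn:crosscorr}: taking $\g = \f$ there, the chain of Cauchy--Schwarz and projective tensor estimates yields $\abs{\Rf(\theta)} \le \norm{\f}_{\SMinf}^2 \norm{\theta}_{\Moneone}$ for every finite-rank $\theta \in \Mone(\R^d)\,\widehat\otimes\,\Mone(\R^d)$, and this extends by continuity to all of $\Mone(\R^{2d})$. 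Identifying $\Minf(\R^{2d}) = \bigl(\Mone(\R^{2d})\bigr)'$ then immediately gives $\norm{\Rf}_{\Minf(\R^{2d})} \le \norm{\f}_{\SMinf(\R^d)}^2$.

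For the reverse inequality, I would exploit the fact that $\norm{\f(\phi)}_{\RV}^2 = \E{\f(\phi)\conj{\f(\phi)}} = \Rf(\phi \otimes \conj{\phi})$ by definition of the autocorrelation. For any $\phi \in \Ball{\Mone(\R^d)}$, estimate
\begin{equation*}
\norm{\f(\phi)}_{\RV}^2 = \abs{\Rf(\phi \otimes \conj{\phi})} \le \norm{\Rf}_{\Minf(\R^{2d})} \norm{\phi \otimes \conj{\phi}}_{\Moneone}.
\end{equation*}
The rank-one tensor satisfies $\norm{\phi \otimes \conj{\phi}}_{\Moneone} \le \norm{\phi}_{\Mone} \norm{\conj{\phi}}_{\Mone}$ directly from the infimum appearing in \eqref{eq:projective.product}. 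Taking a symmetric window (say a Gaussian) in the definition of $\Mone$ yields $\norm{\conj{\phi}}_{\Mone} = \norm{\phi}_{\Mone}$; for a general window one obtains $\norm{\conj{\phi}}_{\Mone} \asymp \norm{\phi}_{\Mone}$ from the equivalence of modulation-space norms under change of window. Combining, $\norm{\phi \otimes \conj{\phi}}_{\Moneone} \lesssim \norm{\phi}_{\Mone}^2 \le 1$, hence $\norm{\f(\phi)}_{\RV}^2 \lesssim \norm{\Rf}_{\Minf(\R^{2d})}$. Passing to the supremum over $\phi \in \Ball{\Mone(\R^d)}$ gives $\norm{\f}_{\SMinf(\R^d)}^2 \lesssim \norm{\Rf}_{\Minf(\R^{2d})}$.

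The only real subtlety, and thus the step I would be most careful about, is the behavior of the $\Mone$-norm under complex conjugation, since the $M^1$ norm is defined via the STFT and the latter is not symmetric in $f$ and $\phi$ without a unitary phase factor. The cleanest fix is to pick a real, symmetric window from the outset when defining the equivalent class of $\Mone$ norms; then the identity $V_\phi \conj{f}(t,\nu) = \conj{V_\phi f(t,-\nu)}$ makes $\norm{\conj{\phi}}_{\Mone} = \norm{\phi}_{\Mone}$ literally (not just up to constants). With this convention, the constant in the equivalence $\norm{\f}_{\SMinf}^2 \asymp \norm{\Rf}_{\Minf}$ is in fact $1$ on one side, and independent of $\f$ on the other.
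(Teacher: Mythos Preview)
Your proof is correct and takes a somewhat different, more elementary route than the paper's. The paper first observes via Cauchy--Schwarz that
\[
\norm{\f}_{\SMinf}^2 = \sup_{\phi\in\Ball{\Mone}}\ip{\f(\phi),\f(\phi)}_{\RV} = \sup_{\phi,\psi\in\Ball{\Mone}}\abs{\ip{\f(\phi),\f(\psi)}_{\RV}},
\]
and then invokes the Feichtinger--Gr\"ochenig kernel theorem, which says that any bounded operator $T\colon\Mone\to\Minf$ has a kernel $\kernel_T\in\Minf(\R^{2d})$ with $\norm{T}_{\Mone\to\Minf}\asymp\norm{\kernel_T}_{\Minf}$; applying this to $T\phi=\Rf(\phi,\placeholder)$ finishes both directions at once. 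Your argument instead works directly with the duality $\Minf(\R^{2d})=(\Mone(\R^{2d}))'$ and the projective tensor identification $\Mone\,\widehat\otimes\,\Mone=\Mone(\R^{2d})$: one direction recycles the estimate already inside \autoref{defn:crosscorr}, the other tests $\Rf$ against the rank-one tensor $\phi\otimes\conj{\phi}$. What you gain is that you never have to appeal to the kernel theorem as a black box (though of course that theorem rests on the same two ingredients); the price is the small detour through complex conjugation, which the paper's operator-norm formulation sidesteps. One minor caveat: your claim that the constant is exactly $1$ on one side depends on which of the several equivalent norms on $\Mone(\R^{2d})$, $\Moneone$, and $\Minf(\R^{2d})$ you fix; in general the identifications hold only up to constants, so $\asymp$ is the honest conclusion either way.
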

\begin{proof}
We begin with an observation that taking $\psi=\phi$, we guarantee
\begin{multline*}
\norm{\f}^2_{\SMinf(\R^d)} = \sup_{\phi \in \Ball{\Mone(\R^d)}}  \ip{\f(\phi), \f(\phi)}_{\RV} \leq \sup_{\phi, \psi \in \Ball{\Mone(\R^d)}} \ip{\f(\phi), \f(\psi)}_{\RV}  \\
\leq \sup_{\phi, \psi \in \Ball{\Mone(\R^d)}} \norm{\f(\phi)}_{\RV} \norm{\f(\psi)}_{\RV} = \norm{\f}^2_{\SMinf(\R^d).}
\end{multline*}

By \cite[Theorem~5.4, p.~21]{FeiGro92} and its proof, every operator $T\colon\Mone\to\Minf$ is in a one-to-one correspondence with a kernel $\kernel_T\in\Minf(\R^{2d})$ such that $\ip{T \phi, \psi} = \ip{\kernel_T, \conj{\phi}\otimes \psi}$ for all $\phi, \psi \in \Mone(\R^d)$, moreover, $\norm{T}_{\Mone\to\Minf}\asymp \norm{\kernel_T}_{\Minf(\R^{2d})}.$ 
In particular, consider an operator $T\colon \Mone\to\Minf$ such that $T \phi = \Rf(\phi, \placeholder)$. Clearly, $\kernel_T = \Rf$. We now compute 
\begin{align*}\label{eq:2d}
\norm{\f}^2_{\SMinf(\R^d)} &= \sup_{\phi,\psi \in \Ball{\Mone(\R^d)}} \abs*{\ip*{\f(\phi), \f(\psi)}}  \\
&= \sup_{\phi, \psi \in \Ball{\Mone(\R^d)}} \abs{\dualp{T \phi, \psi}}  \\
&= \norm{T}_{\Mone \to \Minf} \asymp \norm{\kernel_T}_{\Minf(\R^{2d}} = \norm{\Rf}_{\Minf(\R^{2d}).} \qedhere 
\end{align*}
\end{proof}
An observation crucial for operator identification is the characterization of the modulation space $\Mone$ as Wiener amalgam space $W(A, \ell^1)$ \cite{FZ}, defined with $p=1$ by 
\begin{equation}\label{eq:Wiener}
W(A,\ell^p) = \SetBig{f \in A_{loc}}{\norm{f}_{W(A,\ell^p)} = \paren[\Big]{\sum_{n\in\Z^d} \norm{f \: T_n \psi}^p_{A} }^\frac{1}{p} < \infty},
\end{equation}
where $\psi \in A_c(\R^d)$ is a bounded uniform partition of unity, see \autoref{defn:bupu}. The norm $\norm{\placeholder}_{W(A, \ell^p)}$ does not depend (up to equivalence) on the choice of $\psi$. The space $W(A, \ell^\infty)$ is defined similarly, with usual modifications. 

The (distributional) support of a generalized stochastic process $\f$ is defined as the complement of the largest set $S$ such that for any $\phi\in \Mone(\R^d)$ entirely supported within $S$, $\f(\phi)=0$ as elements of $\RV$. 

\begin{theorem}\label{thm:steta_extension}
Any bounded linear operator $\steta \colon M^1(\R^2)\to \RV$ with $\supp \steta$ compact extends to a bounded linear operator $\steta \colon W(A,\ell^\infty)\to \RV$. 
\end{theorem}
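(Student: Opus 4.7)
The plan is to extend $\steta$ by precomposition with a compactly supported cutoff. Let $K = \supp \steta \subset \R^2$, compact by hypothesis, and pick a function $\chi \in A_c(\R^2)$ (which is contained in $\Mone(\R^2)$ since $A_c \subset W(A,\ell^1) = \Mone$) that equals $1$ on an open neighborhood of $K$. For $\phi \in W(A,\ell^\infty)$, define
\[
\widetilde{\steta}(\phi) \coloneqq \steta(\chi \phi).
\]
The work is to show (i) $\chi\phi \in \Mone(\R^2)$ with the right norm bound, (ii) independence of the choice of $\chi$, and (iii) consistency with $\steta$ on $\Mone$.

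For the norm bound, I would use the amalgam characterization $\Mone(\R^2) = W(A,\ell^1)$ from \eqref{eq:Wiener} and the fact that the Wiener algebra $A = \FT L^1$ is a Banach algebra under pointwise multiplication. Let $\psi \in A_c(\R^2)$ generate the bupu used in \eqref{eq:Wiener}. Since $\chi$ has compact support, there is a finite set $F \subset \Z^2$ (whose size depends only on $\supp\chi$ and $\supp \psi$) such that $\chi \cdot T_n\psi \equiv 0$ for $n \notin F$. Consequently,
\[
\norm{\chi\phi}_{\Mone} = \sum_{n\in \Z^2} \norm{\chi\phi \, T_n\psi}_A = \sum_{n\in F} \norm{\chi\phi\, T_n\psi}_A \leq \norm{\chi}_A \sum_{n\in F} \norm{\phi\, T_n\psi}_A \leq \abs{F}\,\norm{\chi}_A\, \norm{\phi}_{W(A,\ell^\infty)}.
\]
Combining this with boundedness of $\steta\colon \Mone \to \RV$ gives
\[
\norm{\widetilde{\steta}(\phi)}_{\RV} = \norm{\steta(\chi\phi)}_{\RV} \leq \norm{\steta}_{\Mone \to \RV}\, \norm{\chi\phi}_{\Mone} \lesssim \norm{\phi}_{W(A,\ell^\infty)},
\]
so $\widetilde{\steta}$ is bounded on $W(A,\ell^\infty)$.

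For well-definedness, if $\chi_1,\chi_2$ are two such cutoffs, the function $(\chi_1-\chi_2)\phi$ lies in $\Mone$ (by the same estimate) and vanishes on an open neighborhood of $K = \supp \steta$. By the definition of the distributional support of a $\RV$-valued map (applied to $\steta$), we get $\steta((\chi_1-\chi_2)\phi) = 0$, hence $\widetilde{\steta}$ is independent of $\chi$. Consistency with $\steta$ on $\Mone$ follows from the same argument applied to the decomposition $\phi = \chi\phi + (1-\chi)\phi$ for $\phi \in \Mone$: both pieces are in $\Mone$, and $\steta((1-\chi)\phi) = 0$ since $(1-\chi)\phi$ vanishes on a neighborhood of $K$.

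I expect the main technical obstacle to be verifying the algebra-multiplication inequality $\norm{\chi g}_A \leq \norm{\chi}_A\norm{g}_A$ together with the finiteness of $F$; both are standard but the underlying point that $A_c$ multipliers send $W(A,\ell^\infty)$ into $W(A,\ell^1)$ (because compact support collapses the $\ell^\infty$-sum to a finite $\ell^1$-sum) is what drives the entire extension. A minor subtlety is that one must produce a $\chi \in A_c$ with $\chi \equiv 1$ on a neighborhood of $K$; this is accomplished by convolving the indicator of a slight enlargement of $K$ with a suitable bump, both of which lie in $A_c$.
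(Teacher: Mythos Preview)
Your proposal is correct and follows essentially the same approach as the paper: define the extension by $\steta(\phi)\coloneqq\steta(\chi\phi)$ for a compactly supported cutoff in $A_c$, and use the Banach-algebra property of $A$ together with the finiteness of the active bupu indices to bound $\norm{\chi\phi}_{W(A,\ell^1)}$ by a constant times $\norm{\phi}_{W(A,\ell^\infty)}$. The only cosmetic difference is that the paper rescales so that the bupu generator $\psi$ itself serves as the cutoff, whereas you introduce a separate $\chi$; your extra remarks on well-definedness and consistency on $\Mone$ are a welcome addition.
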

\begin{proof}
Without loss of generality, we can rescale the argument variables $(t,\nu)$ in such a way that $\supp \steta \subseteq [-\frac12,\frac12)\times[-\frac12,\frac12)$. 
Let $\psi\in A_c$ be such that it generates a \bupu as in \eqref{eq:Wiener}, and whose support is large enough to cover the support of $\steta$, that is, 
\begin{equation}\label{eq:bupu}
\sum_{n\in \Z^d} \Trans_{n} \psi \equiv 1, \quad \text{ and } \quad \psi\big\vert_{\supp \steta} \equiv 1.
\end{equation}

For any $\phi\in W(A, \ell^\infty)$ by definition we have 
\[ \norm{\phi}_{W(A, \ell^\infty)} \coloneqq \sup_{n\in\Z^d} \norm{\Trans_n \psi \: \phi}_A < \infty. \]
In particular, $\psi\,\phi\in A$, and $\psi\,\phi$ is compactly supported, thus $\psi\,\phi\in W(A,\ell^1)$.  In fact,  since support of $\psi$ is compact, the set $\mathcal{F} \coloneqq \Set{n}{\supp \Trans_n\psi \cap \supp \psi \neq \varnothing}$ is finite,  $\abs{\mathcal{F}} < \infty$. We further observe
\begin{equation}\label{eq:psiphi}\begin{aligned}
\norm{\psi\:\phi}_{W(A, \ell^1)} &= \sumZ{n} \norm{\Trans_n \psi\: \psi \, \phi}_A 
= \sum_{n\in \mathcal{F}} \norm{\Trans_n \psi\:\phi}_A \norm{\psi}_A \\
&\leq \abs{\mathcal{F}} \, \norm{\psi}_A \, \sup_{n\in \mathcal{F}} \norm{\Trans_n \psi\:\phi}_A 
\leq  \abs{\mathcal{F}} \, \norm{\psi}_A \, \norm{\phi}_{W(A,\ell^\infty),}
\end{aligned}\end{equation}
where we used the fact that $A$ is a Banach algebra under pointwise products.
We can now define 
\[ \steta(\phi)\coloneqq \steta(\psi\,\phi).\]
This obviously defines a linear mapping $W(A,\ell^1) \to \RV$. For any choices $\psi, \psi'$ satisfying \eqref{eq:bupu} we have $(\psi-\psi')\phi\equiv 0$ on $\supp \steta$, therefore, it is well defined. Boundedness follows by observing 
\begin{align*} 
\norm{\steta(\phi)}_{\RV} &= \norm{\steta(\psi\:\phi)}_{\RV}  \\
&\leq \norm{\steta}_{W(A,\ell^1)\to \RV} \norm{\psi\:\phi}_{W(A,\ell^1)}  \\
&\leq C \norm{\steta}_{\SMinf(\R^{2d})} \norm{\phi}_{W(A,\ell^\infty),}
\end{align*} 
where we used \eqref{eq:psiphi} and recalled that $\Mone = W(A,\ell^1)$, and $\SMinf = \mathcal{L}(\Mone,\RV)$.
\end{proof}

\begin{theorem}\label{thm:kernel}
For all $\H \colon \Mone(\R)\to \SMinf(\R)$, there exists a unique spreading function $\steta \colon \Mone(\R^2) \to \RV$ such that for all $f, \phi\in \Mone(\R)$
\[
\dualp{\H f,\phi} = \dualp{\steta, V_f \phi}.
\]
\end{theorem}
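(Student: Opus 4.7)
The plan is to prove the theorem in two stages: first construct a \emph{stochastic integral kernel} $\kernel_{\H}\in \SMinf(\R^2)$ that represents $\H$, and then obtain $\steta$ from $\kernel_{\H}$ by the classical change of variables that, in the deterministic setting, converts a kernel into a spreading function. This parallels the deterministic Feichtinger kernel theorem already invoked in the proof of \autoref{lem:sm}, with $\RV(\Omega)$ in place of $\C$.

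For the first stage, I would define $B(f,\phi)\coloneqq \dualp{\H f,\phi}$ for $(f,\phi)\in \Mone(\R)\times \Mone(\R)$ and observe that
\[
\norm{B(f,\phi)}_{\RV}\leq \norm{\H f}_{\SMinf}\norm{\phi}_{\Mone}\leq \norm{\H}_{\Mone\to\SMinf}\,\norm{f}_{\Mone}\norm{\phi}_{\Mone},
\]
so $B$ is a bounded bilinear form on $\Mone(\R)\times \Mone(\R)$. The universal property of the projective tensor product then supplies a unique bounded linear extension $\widetilde{\kernel}\colon \Mone(\R)\,\widehat{\otimes}\,\Mone(\R)\to \RV$ satisfying $\widetilde{\kernel}(f\otimes \phi)=B(f,\phi)$. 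Invoking the identification $\Mone(\R)\,\widehat{\otimes}\,\Mone(\R)=\Mone(\R^2)$ from \eqref{eq:projective.product}, the functional $\widetilde{\kernel}$ becomes an element $\kernel_{\H}\in \SMinf(\R^2)$, the stochastic analogue of the Feichtinger kernel.

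For the second stage, I would use the substitution $(x,y)\mapsto (t,\nu)=(x-y,\nu)$ composed with a partial Fourier transform that, in the deterministic case, sends a kernel $\kernel_H(x,y)$ to the spreading function $\eta_H(t,\nu)$. Both operations are topological automorphisms of $\Mone(\R^2)$ (they lie in the metaplectic group action, under which Feichtinger's algebra is invariant), so their composition dualizes to a topological automorphism of $\SMinf(\R^2)=\Linear(\Mone(\R^2),\RV)$. Applying this dual automorphism to $\kernel_{\H}$ produces the desired $\steta\in \SMinf(\R^2)$. Since the image of $f\otimes \overline{\phi}$ under the very same change of variables is $V_f\phi(t,\nu)=\int f(x)\,\overline{\phi(x-t)}\,\empi{\nu x}\,\d x$, one obtains on rank-one tensors
\[
\dualp{\steta, V_f\phi}=\widetilde{\kernel}(f\otimes \overline{\phi})=\dualp{\H f,\phi}
\]
once the conjugate-linear-in-second-slot convention is tracked through. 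Uniqueness of $\steta$ then follows from the density of finite sums of rank-one tensors in $\Mone(\R^2)$ that is built into the definition \eqref{eq:projective.product}.

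The main obstacle will be verifying that the kernel-to-spreading-function change of variables is a topological automorphism of $\Mone(\R^2)$, and that its adjoint acts on $\SMinf(\R^2)=\Linear(\Mone(\R^2),\RV)$ as claimed. This is standard in time-frequency analysis, but it must be combined with careful bookkeeping of the paper's sign and conjugation conventions so that the final identity $\dualp{\H f,\phi}=\dualp{\steta, V_f\phi}$ holds literally, rather than merely up to an isometry of $\Mone(\R^2)$.
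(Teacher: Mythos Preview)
Your proposal is correct and is precisely the approach the paper intends: the paper's own proof consists solely of the remark that it ``follows closely the proof of the kernel theorem for modulation spaces in \cite[Theorem~14.4.1, p.~314]{Gro},'' and your two-stage argument (bounded bilinear form $\Rightarrow$ unique linear extension on $\Mone\widehat{\otimes}\Mone=\Mone(\R^2)$ via \eqref{eq:projective.product}, then kernel-to-spreading-function via a metaplectic automorphism of $\Mone(\R^2)$) is exactly that proof, transported from $\C$-valued to $\RV$-valued functionals. You have simply made explicit what the paper leaves to the citation.
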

\begin{proof}
The proof follows closely the proof of the kernel theorem for modulation spaces in \cite[Theorem~14.4.1, p.~314]{Gro}.
\end{proof}

\begin{definition}
We say that an operator $\H \colon \Minf(\R)\to \SMinf(\R)$ belongs to the stochastic Paley-Wiener class $\StOPW(M)$ if  
\[ 
\dualp{\H f,\phi} = \dualp{\steta, V_f \phi} \quad \text{ for all } f \in \Minf(\R), \phi \in \Mone(\R),
\]
the support of $\steta$ is compact and the distributional support of $\Reta$ is a subset of the closed set $M \subseteq \supp \steta \otimes \supp \steta$. 
The space $\StOPW(M)$ inherits the operator norm $\norm{H}_{\Minf \to \SMinf.}$ 
\end{definition}
\begin{theorem}\label{thm:H_extension}
Any bounded linear operator $\H \colon \Mone(\R) \to \SMinf(\R)$ with $\supp \steta$ compact extends to a bounded linear operator $\H \colon \Minf(\R)\to \SMinf(\R)$.  Moreover, 
\[ 
\norm{\H f}_{\SMinf} \leq C \norm{\steta}_{\SMinf} \norm{f}_{\Minf} \quad \text{ for all } f\in\Minf(\R).
\]
\end{theorem}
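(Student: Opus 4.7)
The strategy is to lift the spreading-function representation of \autoref{thm:kernel} from $\Mone$-inputs to $\Minf$-inputs, using the compact support of $\steta$ to first enlarge the domain of $\steta$ and then pull $\Minf$-data into that enlarged domain via the short-time Fourier transform.

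First, since $\supp\steta$ is compact by hypothesis, \autoref{thm:steta_extension} supplies a bounded extension (still denoted $\steta$) from $\Mone(\R^2)$ to $W(A,\ell^\infty)(\R^2)$ with
\[
\norm{\steta(\Phi)}_{\RV} \leq C_1\, \norm{\steta}_{\SMinf(\R^2)}\, \norm{\Phi}_{W(A,\ell^\infty)(\R^2).}
\]
Next, I would establish the key STFT bound
\[
\norm{V_f \phi}_{W(A,\ell^\infty)(\R^2)} \leq C_2\, \norm{f}_{\Minf(\R)}\, \norm{\phi}_{\Mone(\R)} \quad\text{for }f\in\Minf(\R),\ \phi\in\Mone(\R).
\]
By the symmetry \eqref{eq:V.sym}, $\abs{V_f\phi(t,\nu)}=\abs{V_\phi f(-t,-\nu)}$, this reduces to showing that $V_\phi \colon \Minf(\R) \to W(A,\ell^\infty)(\R^2)$ is bounded whenever $\phi\in\Mone(\R)$. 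One derives this from the reproducing formula $V_\phi f = V_\phi f \conv V_\phi \phi$ (a twisted convolution over the time-frequency plane with the Feichtinger-class ambiguity function $V_\phi\phi \in \Mone(\R^2) = W(A,\ell^1)(\R^2)$), together with the standard Wiener amalgam convolution relation $L^\infty \conv W(A,\ell^1) \hookrightarrow W(A,\ell^\infty)$.

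With Steps~1 and~2 in place, I would define the extension by $(\H f)(\phi) \coloneqq \steta(V_f \phi)$ for $f \in \Minf(\R)$, $\phi \in \Mone(\R)$. Linearity of $\phi \mapsto (\H f)(\phi)$ is immediate, and chaining the two estimates yields
\[
\norm{(\H f)(\phi)}_{\RV} \leq C_1 C_2\, \norm{\steta}_{\SMinf}\, \norm{f}_{\Minf}\, \norm{\phi}_{\Mone.}
\]
Taking the supremum over $\phi \in \Ball{\Mone(\R)}$ gives $\H f \in \SMinf(\R)$ with the asserted norm bound and $C = C_1 C_2$. The uniqueness clause of \autoref{thm:kernel} guarantees the new map agrees with the original $\H$ on $\Mone(\R) \subset \Minf(\R)$, so it is genuinely an extension rather than a fresh construction.

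The main obstacle is the STFT regularity invoked in Step~2. The containment $V_\phi f \in W(A,\ell^\infty)(\R^2)$ is strictly stronger than the trivial $V_\phi f \in L^\infty$ that characterizes $\Minf$, and this extra local $A$-regularity is precisely what is needed to mate with the compact-support cutoff baked into the proof of \autoref{thm:steta_extension}; it is also here that the hypothesis $\phi\in\Mone(\R)$ is indispensable, since a generic tempered window would fail to endow $V_\phi f$ with the required local smoothness.
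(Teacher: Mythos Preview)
Your argument is correct and follows essentially the same route as the paper: invoke \autoref{thm:steta_extension} to extend $\steta$ to $W(A,\ell^\infty)$, use the bound $\norm{V_f\phi}_{W(A,\ell^\infty)}\lesssim\norm{\phi}_{\Mone}\norm{f}_{\Minf}$, and then chain the two through the kernel representation of \autoref{thm:kernel}, finishing by taking a supremum over $\phi\in\Ball{\Mone}$. The only difference is that where you sketch the STFT regularity $V_\phi f\in W(A,\ell^\infty)$ via the reproducing (twisted-)convolution identity and the amalgam relation $L^\infty\conv W(A,\ell^1)\hookrightarrow W(A,\ell^\infty)$, the paper simply cites this as \cite[Proposition~4.2, part~1]{PfaWalFeich}.
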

\begin{proof} 
Let $\steta \in \SMinf(\R^{2d})$ with $\supp \steta\subseteq M$ such that $M$ is compact. 
From \cite[Proposition~4.2,~part~1]{PfaWalFeich} we know that for $\phi\in\Mone(\R), f\in \Minf(\R)$, we have $V_{\phi} f \in W(A,\ell^\infty)$ and 
\[ 
\norm{V_f \phi}_{W(A,\ell^\infty)} \leq \norm{\phi}_{\Mone} \norm{f}_{\Minf.} 
\]
By \autoref{thm:kernel}, $\dualp{\H f,\phi} = \dualp{\steta, V_f \phi}$,
and by \autoref{thm:steta_extension}, $\steta$ accepts $V_f \phi \in W(A,\ell^\infty)$ as input, so we can estimate 
\begin{align*} 
 \abs{\dualp{\H f,\phi}} &=\abs{\dualp{\steta, V_f \phi}} \\
&\leq C \norm{\steta}_{\SMinf}  \norm{V_f \phi}_{W(A,\ell^\infty)} \\
&\leq C \norm{\steta}_{\SMinf} \norm{\phi}_{\Mone}\norm{f}_{\Minf.}
\end{align*}
Taking the supremum over all $\phi\in\Ball{\Mone(\R)}$ on both sides gives
\begin{equation*}\label{eq:upper}
\norm{\H f}_{\SMinf} \leq C\norm{\steta}_{\SMinf}\norm{f}_{\Minf,}
\end{equation*}  
and ultimately, for the operator norm,
\begin{equation*}
\norm{\H}_{\StOPW(M)} \leq C\norm{\steta}_{\SMinf.} \qedhere
\end{equation*}
\end{proof}

\begin{lemma}\label{lem:EV=VE}
For all $\f,\g \in \SMinf(\R^d)$, $\displaystyle \ip{V_\phi \f, V_\psi \g}_{\RV} = \mathcal{I}V_{\phi \otimes \conj{\psi}} R_{\f\g},$ where $\mathcal{I}$ is the reflection in the fourth variable: $\mathcal{I}f(\cdot,\cdot, \cdot, \nu') = f(\cdot,\cdot, \cdot, -\nu')$. 
\end{lemma}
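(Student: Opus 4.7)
The plan is to unravel both sides to an expectation of a product of evaluations of $\f$ and $\g$ at time--frequency-shifted windows, and show they agree up to a reflection in the last Doppler variable.

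First, I unfold the left-hand side at the point $(t,\nu,t'\!,\nu')$. By the definition \eqref{eq:STFT} of the short-time Fourier transform, for $\f,\g\in\SMinf(\R^d)$ with $\phi,\psi\in\Mone(\R^d)$, one has $V_\phi\f(t,\nu) = \f(M_\nu T_t \phi)$ and analogously for $\g$, because modulation spaces are invariant under time--frequency shifts so $M_\nu T_t\phi\in\Mone$. Thus, using the definition of $\ip{\cdot,\cdot}_{\RV}$ and then \autoref{defn:crosscorr},
\[
\ip{V_\phi\f(t,\nu),V_\psi\g(t'\!,\nu')}_{\RV} = \E{\f(M_\nu T_t\phi)\,\conj{\g(M_{\nu'}T_{t'}\psi)}} = \ip{R_{\f\g},\,(M_\nu T_t\phi)\otimes\conj{M_{\nu'}T_{t'}\psi}}.
\]

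Next, I expand the right-hand side. The STFT of the distribution $R_{\f\g}\in\Minf(\R^{2d})$ with window $\phi\otimes\conj{\psi}\in\Mone(\R^{2d})$ at $(t,\nu,t'\!,\nu')$ is
\[
V_{\phi\otimes\conj\psi}R_{\f\g}(t,\nu,t'\!,\nu') = \ip{R_{\f\g},\,M_{(\nu,\nu')}T_{(t,t')}(\phi\otimes\conj\psi)}.
\]
A direct pointwise calculation gives
\[
M_{(\nu,\nu')}T_{(t,t')}(\phi\otimes\conj\psi)(x,x') = \epi{\nu x}\phi(x-t)\cdot\epi{\nu' x'}\conj{\psi(x'-t')} = (M_\nu T_t\phi)(x)\cdot\conj{(M_{-\nu'}T_{t'}\psi)(x')},
\]
since $\conj{M_{-\nu'}T_{t'}\psi}(x') = \conj{\empi{\nu' x'}\psi(x'-t')} = \epi{\nu' x'}\conj{\psi(x'-t')}$. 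Therefore the tensor window factors as $(M_\nu T_t\phi)\otimes\conj{M_{-\nu'}T_{t'}\psi}$, and applying \autoref{defn:crosscorr} once more,
\[
V_{\phi\otimes\conj\psi}R_{\f\g}(t,\nu,t'\!,\nu') = \E{\f(M_\nu T_t\phi)\,\conj{\g(M_{-\nu'}T_{t'}\psi)}}.
\]

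Comparing the two displays, the only discrepancy is the sign of $\nu'$ inside the modulation argument acting on $\psi$. That is precisely what the reflection $\mathcal{I}$ in the fourth variable corrects, so
\[
\ip{V_\phi\f,V_\psi\g}_{\RV}(t,\nu,t'\!,\nu') = V_{\phi\otimes\conj\psi}R_{\f\g}(t,\nu,t'\!,-\nu') = \bigl(\mathcal{I}V_{\phi\otimes\conj\psi}R_{\f\g}\bigr)(t,\nu,t'\!,\nu'),
\]
as claimed. The only mild subtlety is bookkeeping the conjugate-linearity convention of $\ip{\cdot,\cdot}$, which is what forces the sign flip on $\nu'$ and explains the need for $\mathcal{I}$; everything else is a direct application of the identities for time--frequency shifts and the definitions of cross-correlation and STFT already recorded above.
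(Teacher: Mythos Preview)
Your proof is correct and follows essentially the same route as the paper's: both unfold the left-hand side via the definition of the STFT and of $R_{\f\g}$, then identify the tensor window $M_{(\nu,\nu')}T_{(t,t')}(\phi\otimes\conj\psi)$ with $(M_\nu T_t\phi)\otimes\conj{M_{-\nu'}T_{t'}\psi}$ to exhibit the sign flip in $\nu'$. You simply spell out the window factorization a bit more explicitly than the paper does.
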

\begin{proof}
By \autoref{defn:crosscorr} of $R_{\f\g}$,
\begin{align*}	
\ip{V_\phi \f, V_\psi \g}_{\RV} &= \E{V_\phi \f (t,\nu) \,\conj{V_\psi \g (t',\nu')}}  \\
&= \E{\dualp{\f, M_\nu T_t \phi}\,\conj{\dualp{\g, M_{\nu'}T_{t'} \,\psi}}}  \\
&= \dualp*{R_{\f\g}, M_\nu T_t \phi \,\otimes \,\conj{M_{\nu'}T_{t'} \,\psi}} \\
&= V_{\phi \otimes \, \conj{\psi}} \ R_{\f\g}(t,\nu, t',-\nu').  \qedhere
\end{align*} 
\end{proof}

\begin{remark}
Note that we only used that the space $\RV(\Omega)$ has a Hilbert space structure. The theory developed above still holds if we replace $\RV(\Omega)$ with an arbitrary Hilbert space $\Hcal$, and $\E{\placeholder,\placeholder}$ with the  inner product $\ip{\placeholder, \placeholder}_{\Hcal.}$
\end{remark}
\section{A sufficient condition for the identifiability of \texorpdfstring{$\StOPW(M)$}{StOPW(M)}}\label{sec:sufficiency} 
By identifying the stochastic operator $\H$, we mean determining $\Reta(\tntn)$, the autocorrelation  of the spreading function $\steta_{\scriptstyle \H}(t,\nu)$, from $\RHf(x,x')$, the autocorrelation of the received response $\H f(x)$ to a fixed sounding signal $f$. 
More generally, given a class of stochastic operators $\Hcal$, not necessarily closed under addition, we would like to be able to tell apart any two operators using \eqref{eq:norm.ineq}. 
\newcommand{\Phist}{\Phi_f^{\text{st}}}
Since the autocorrelations in question satisfy $\ip*{\RHf, \phi \otimes \conj{\psi}} = \ip{\Reta, V_{\phi\otimes\conj{\psi}} \: f \otimes \conj{f}}$,
we are trying to invert in a stable way the linear mapping $\Phist\colon \Minf(\R^4) \to \Minf(\R^2)$ given formally and weakly by 
\begin{align}\label{eq:Phist}
\ip*{\Phist (\placeholder), \phi \otimes \conj{\psi}} = \ip{\placeholder, V_{\phi\otimes\conj{\psi}} \: f \otimes \conj{f}}.
\end{align}
We reprise here the definition \eqref{eq:norm.ineq} tailored to the map $\Phist$ and the class $\StOPW(M)$. 
\begin{definition}\label{defn:id.3}
We say that $f \in \Minf(\R)$ \emph{identifies} the set $\StOPW(M)$ if the linear mapping  $\Phist$ given by \eqref{eq:Phist} 
is boundedly invertible, that is, if there exist constants $0 < A \leq  B  < \infty$ such that for all pairs $H_1 , H_2 \in \StOPW(M)$, 
\begin{equation}\label{eq:id.3}
A\norm{R_{\steta_1}-R_{\steta_2}}_{\Minf(\R^4)} \leq \norm{R_{\H_1 f}  - R_{\H_2 f}}_{\Minf(\R^2)} \leq  B\norm{R_{\steta_1} - R_{\steta_2}}_{\Minf(\R^4)}. 
\end{equation}
\end{definition}
Our methods require the support set of the autocorrelations, or rather, their superset $M$, to be rectified satisfying  the symmetry properties of the autocorrelation function.
\begin{definition}\label{defn:M.is.rectified}
We say the set $M$ is \emph{symmetrically $(a,b,\Lambda)$-rectified} if it can be covered by the translations 
$\Lambda \coloneqq \braces{(k_j, l_j, k'_j, l'_j) }_{j=1}^{J}$
of the prototype parallelepiped $\rect^2 \coloneqq [0,a)\times[0,b) \times [0,a) \times [0,b)$ along the lattice $(a,b,a,b)\Z^4$, that is,  
\begin{equation}\label{eq:M.is.rectified}
M \subset \bigcup_{j=0}^{J} \rect^2+ (a k_j, b l_j, a k_j', b l_j'),
\end{equation}
such that the 4D volume of $\rect^2$ is small: $ab=\frac{1}{L}$, with $L$ is prime, and the index set $\Lambda$ is an \emph{admissible} set, in a sense that 
\begin{equation}\label{eq:spd}
(\klkl) \in \Lambda \quad \implies (k,l,k,l), (k'\!,l'\!,k'\!,l'), (k'\!,l'\!,k,l) \in \Lambda.
\end{equation}
The rectification $(a,b,\Lambda)$ is \emph{\label{lbl:precise} precise} if we have equality in \eqref{eq:M.is.rectified}. 
We will always denote $\Gamma \subset \Z^2$ the projection of $\Lambda$ onto the first two (and by symmetry, the second two) indices, that is,  $\Gamma = \{ \gamma = (k,l) \mid (k,l,k,l) \in \Lambda \}$. Clearly, $\Lambda \subset \Gamma \times \Gamma$. 
\end{definition}

The following theorem is an identification result corresponding to operator sampling results in \cite{PfaZh02}. It provides justification to the formal calculations that lead to the reconstruction formula (10) in \cite{PfaZh02}. 
\begin{theorem}
\label{cor:Reta_less_RShah} \label{thm:sufficiency}
Let $M$ be symmetrically $(a,b,\Lambda)$-rectified with $L^2=\abs{\Lambda}$. If some $c\in\C^L$ generates $G(\,:\,,kL+l) = [ M^l \, T^k c ]_{k,l=0}^{L-1}$ such that the submatrix $(\GG)\vert_\Lambda$ is invertible, then 
\[
\norm{\steta}_{\SMinf(\R^2)} \lesssim \norm{\H\Shah}_{\SMinf(\R)} \quad \text{ for any } \H\in\StOPW(M),
\]
where the sounding signal is given by $\Shah_c(x) = \sum_{k\in\Z} c_{k \bmod L}\: \delta(x-a k).$
\end{theorem}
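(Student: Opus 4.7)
The plan is to reduce the stochastic identification inequality to a tensor-Gabor linear-algebra problem, paralleling the strategy of \cite{KozPfa, PfaWal} in the deterministic case but lifted to the level of autocorrelations.

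First, by \autoref{lem:sm} one has $\norm{\steta}_{\SMinf(\R^2)}^2 \asymp \norm{\Reta}_{\Minf(\R^4)}$ and $\norm{\H\Shah_c}_{\SMinf(\R)}^2 \asymp \norm{\RHf}_{\Minf(\R^2)}$, so the claim is equivalent to
\[
\norm{\Reta}_{\Minf(\R^4)} \lesssim \norm{\RHf}_{\Minf(\R^2)}.
\]
I would then expand $\RHf(x,x') = \E{(\H\Shah_c)(x)\,\conj{(\H\Shah_c)(x')}}$ using \eqref{eq:Hf}. Because $\Shah_c$ concentrates the time variable on $a\Z$, the double integral over $(t,\nu)$ and $(t'\!,\nu')$ collapses into a weighted double sum against the product $c_{k\bmod L}\conj{c_{k'\bmod L}}$ of a quasi-periodic time--frequency slice of $\Reta$. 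All $\Minf$-level manipulations are justified by \autoref{thm:H_extension} and \autoref{lem:EV=VE}, which let one move between $\steta$, $\Reta$, and their short-time Fourier transforms without leaving the stochastic modulation-space setting.

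Next, I would exploit the symmetric $(a,b,\Lambda)$-rectification of $M$: write $\Reta = \sum_{\klkl\in\Lambda}\Reta^{\klkl}$, where each piece is supported in $\rect^2+(ak,bl,ak'\!,bl')$. A Zak-transform/Poisson-summation argument inside each cell (analogous to the one driving \autoref{thm:det}) shows that, when $\RHf$ is probed on a suitable $L^2$-point time--frequency grid and the resulting samples are arranged as a fiber-valued vector in $\C^{L^2}$, the map from the cell pieces $\{\Reta^{\klkl}\}_{\klkl\in\Lambda}$ to these samples is given, up to time--frequency shifts, by the finite-dimensional matrix $(\GG)\eval_\Lambda$. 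The tensor structure $\GG=\conj G\otimes G$ appears because $\RHf$ is sesquilinear in $\H\Shah_c$ (one factor conjugated), and the admissibility property \eqref{eq:spd} is precisely what ensures that the index set $\Lambda$ respects the Hermitian symmetry $\Reta(\tntn)=\conj{\Reta(t'\!,\nu'\!,t,\nu)}$ so that the linear system is consistent and closes on $\Lambda$. The hypothesis then gives pointwise (per-fiber) bounded invertibility, so each local piece $\Reta^{\klkl}$ is recovered from the corresponding fiber of $\RHf$.

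The main obstacle, and the final step, is upgrading this pointwise invertibility to an $\Minf(\R^4)$ bound on $\Reta$. This is exactly what the bi-infinite Gabor-matrix machinery of \autoref{sec:bi-infinite}, the generalization of Theorem 2.1 of \cite{Pfa05}, is built for: it delivers uniform Wiener-amalgam/modulation-space control of the cellwise inverses, provided the finite-dimensional submatrix $(\GG)\eval_\Lambda$ is invertible, by gluing the fiber-by-fiber reconstructions through a bounded uniform partition of unity on the time--frequency plane. Once this gluing yields $\norm{\Reta^{\klkl}}_{\Minf}\lesssim \norm{\RHf}_{\Minf}$ uniformly in $\klkl\in\Lambda$, summing the finitely many cell contributions and invoking \autoref{lem:sm} in reverse yields the stated inequality. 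I expect the technical bookkeeping of the four-dimensional rectification, together with the propagation of the $\Minf$-norm through the tensor-Gabor inverse, to be the delicate part; the linear-algebraic content is essentially captured by invertibility of $(\GG)\eval_\Lambda$.
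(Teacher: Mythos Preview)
Your high-level outline is close to the paper's in spirit---reduce to autocorrelations via \autoref{lem:sm}, expand $\H\Shah_c$ using the delta train, apply a Zak/Poisson argument to pass from $\h$ to $\steta$, and recognize the finite Gabor tensor matrix $(\GG)\eval_\Lambda$ acting on the cell pieces. But your final step contains a genuine misidentification of the tool.

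You invoke the bi-infinite matrix machinery of \autoref{sec:bi-infinite} to ``upgrade pointwise invertibility to an $\Minf(\R^4)$ bound.'' That machinery does the \emph{opposite}: \autoref{cor:my.bi-infinite} is a non-invertibility criterion (decay plus singular slanted minors imply no bounded left inverse), and it is used only in the necessity direction (\autoref{sec:necessity}, \autoref{sec:defective.patterns}). It cannot deliver the stable recovery you need here. The actual mechanism in the paper is simpler and does not need any bi-infinite analysis: after a Riesz-basis reduction (\autoref{lem:Riesz}) and the Zak identity (\autoref{eq:h.to.steta}), the $\Minf$ norm becomes a supremum over continuous fiber variables and over the finite index $(j,j')\in\Z_L^2$. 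At each fiber one has a \emph{finite} $L^2\times L^2$ linear system governed by $(\GG)\eval_\Lambda$, so invertibility immediately gives the two-sided $\ell^\infty$ norm equivalence $\norm{(\GG)\eval_\Lambda v}_\infty\asymp\norm{v}_\infty$ with constants independent of the fiber. The supremum structure then propagates this directly to the $\Minf$ estimate---no gluing via partitions of unity or bi-infinite inverses is required. The lemma you are missing is \autoref{lem:Riesz} (together with \autoref{lem:trim} to restrict the sum to $\Lambda$), not anything from \autoref{sec:bi-infinite}.
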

\begin{proof}
In the following, by abuse of notation, $\norm{f}_{\Minf([0,a))} \coloneqq \norm{V_g f}_{L^\infty([0,a)\times \R).}$
Consider 
\allowdisplaybreaks
\begin{align*}
\MoveEqLeft \norm{\H \Shah_c}^2_{\SMinf(\R)} = \norm{R_{\H \Shah_c}(x,x')}_{\Minf(\R^2)}  \\
&= \sup_{n,n'\in \Z} \norm{ R_{\H\Shah_c}(x+an, x'+an')}_{\Minf([0,a)^2)} \\
&= \sup_{j,j'\in \L} \sup_{m,m'\in \Z} \norm{ R_{\H\Shah_c}(x+aj + amL, x'+aj'+am'L)}_{\Minf} \\
&\asymp \sup_{j,j'\in\L} \norm[\Big]{\sumZ{m,m'} R_{\H\Shah_c}(x+an, x'+an') \: M_{-an} \psi(\gamma) \: \conj{M_{-an'} \psi(\gamma')}}_{\Minf,}
\end{align*}
where we use $n=mL+j, n'=m'L + j'$ and apply \autoref{lem:Riesz} below with $\Psi_{n,n'}(\gamma,\gamma') \coloneqq  M_{-an} \psi(\gamma)\: \conj{M_{-an'} \psi(\gamma')}$ with $\psi(\gamma)$  chosen later. 
Consider the expression inside the norms
\begin{multline*}
R_{j,j'}  = \EXP \left( \sumZ{m} \H\Shah_c(x+an)\empi{an\gamma} \psi(\gamma) \right) \\
\times  \left( \sumZ{m'}\H\Shah_c(x'+an')\empi{an'\gamma'} \psi(\gamma') \right)^*.
\end{multline*}
The treatment of both factors is identical, we only detail the first. 
\begin{align*}
R_j &=\sumZ{m} \H\Shah_c(x+an)\:\empi{an\gamma}\\
 &=\sumZ{m}\sumZ{k}c_{n+k}\h(x+a(j+mL), x-ak) \empi{a(j+mL)\gamma} \\
&= \sumZ{m} \sumZ{k} c_{k+mL+j} \: \h(x+aj+amL, x-ak) \empi{amL\gamma} \empi{aj\gamma} \\
&=  \sumZ{k} c_{k+j} \left[ \sumZ{m} \: \h(x+aj+amL, x-ak) \empi{amL\gamma}\right] \empi{aj\gamma} \\
\intertext{--- by \autoref{eq:h.to.steta} ---}
&= b^d \sumZ{k} c_{k+j} \sumZ{l} \: \steta(x-ak, \gamma-bl)\epi{(x+aj)(\gamma-bl)} \empi{aj\gamma}  \\
&= b^d \sumZ{k} \sumZ{l} c_{k+j} \: \steta(x-ak, \gamma-bl)\epi{(x\gamma + blx - jl/L)}  \\
&= b^d \sumZ{k} \sumZ{l} \left( M^{-l} T^{-k} c \right)_j \: \steta(x-ak, \gamma-bl)\epi{x(\gamma-bl)} \\
&= b^d \sumZ{k,l} G_{j,kl} \:  \steta(x+ak, \gamma+bl) \epi{x(\gamma-bl)},
\end{align*}
where in the last step we rename $k=-k, l=-l$ and denote $G_{j,(k,l)} = \left( M^l T^k c \right)_j$.
Now we return to the original $\norm{\H\Shah_c}_{\SMinf(\R),}$ 
\newcommand*{\mysup}{ \sup_{\vphantom{j'\in\L}\nu,\nu' \in \R}  \sup_{\vphantom{j'\in\L}t,t'\in [0,a)} \sup_{\vphantom{j'\in\L}\xi,\xi' \in \R} \sup_{\vphantom{j'\in\L}\tau,\tau'\in [0,b)}  \sup_{\vphantom{j'\in\L}j,j'\in\L}}
\newcommand*{\mystrut}{\vphantom{[\xi'}} 
\begin{align*}
\MoveEqLeft \norm{\H\Shah_c}^2_{\SMinf(\R)} =\sup_{j,j'\in\L} \norm*{R_{j,j'}}_{\: M^\infty([0,a)^2)}\\
&= b^{2d} \sup_{j,j'\in\L} \Big\Vert \EXP \sumZ{k,l} G_{j,kl} \:  \steta(x+ak, \gamma+bl) \epi{x(\gamma-bl)} \: \psi(\gamma)\\
&\qquad \times \Big( \sumZ{k'\!,l'} G_{j',k'l'} \:  \steta(x'+ak', \gamma'+bl') \epi{x'(\gamma'-bl')} \: \psi(\gamma')\Big)^* \Big\Vert_{\Minf([0,a)^2)} \\
&\asymp  \sup_{j,j'\in\L}\Big\Vert \Big\langle \sumZ{\klkl} G_{j,kl} \: \:\conj{G_{j',k'l'} } \: \Reta(x+ak, \gamma+bl, x'+ak', \gamma'+bl') \\
&\qquad \times  \epi{x(\gamma-bl)} \: \psi(\gamma) \:\conj{ \epi{x'(\gamma'-bl')} \: \psi(\gamma')}, \\
&\qquad \times \: M_\nu T_t \: \phi(x) \:\conj{ M_{\nu'} T_{t'} \: \phi(x')} \: M_\xi T_\tau \: \phi(\gamma) \:\conj{ M_{\xi'} T_{\tau'} \: \phi(\gamma')} \Big\rangle \Big\Vert_{L^\infty(([0,a)\times \R)^2),} \\ 
\intertext{where $\phi(x)\in\S(\R)$ is a smooth window as in \eqref{eq:STFT}. We substitute  $x=x+a k, \gamma=\gamma+b l, x'=x'+a k', \gamma' = \gamma'+b l'$ }
&\asymp  \sup_{j,j'\in\L} \Big\Vert \sum_{\klkl\in \Z} \Big\langle G_{j,kl}\:\conj{G_{j',k'l'} }\: \Reta(x,\gamma, x',\gamma') \epi{(x\gamma-x'\gamma')} \\
&\qquad \times \: \psi(\gamma-bl) \:\conj{ \psi(\gamma'-bl')}, \quad M_\nu T_{t+ak} \: \phi(x) \:\conj{ M_{\nu'} T_{t'+ak'} \: \phi(x')} \: \\
&\qquad \times \:  M_{\xi-ak} T_{\tau+bl} \: \phi(\gamma) \:\conj{ M_{\xi'-ak'} T_{\tau'+bl'} \: \phi(\gamma')} \Big\rangle \Big\Vert_{L^\infty} \\
\intertext{denote for simplicity $\widetilde{\Reta}(x,\gamma, x',\gamma') \coloneqq \Reta(x,\gamma, x',\gamma')\epi{(x\gamma-x'\gamma')}$ and narrow the domain of $\tau, \tau'$ to $[0,b)$ } 
&\geqsim \mysup \Big\vert \Big\langle  \sum_{\klkl\in\Z} G_{j,kl}\:\conj{G_{j',k'l'} }\: \widetilde{\Reta}(x,\gamma, x',\gamma')  \\ 
 &\qquad \times \: \psi(\gamma-bl) \:\conj{ \psi(\gamma'-bl')},  \quad  M_\nu T_{t+ak} \: \phi(x) \:\conj{ M_{\nu'} T_{t'+ak'} \: \phi(x')} \\
&\qquad \times   \: M_{\xi-ak} T_{\tau+bl} \: \phi(\gamma) \:\conj{ M_{\xi'-ak'} T_{\tau'+bl'} \: \phi(\gamma')} \Big\rangle \Big\vert \\
\intertext{by selecting $\psi(\gamma)$ such that $\psi(\gamma)\equiv 1$ on $[0,b) + \supp \: \phi(\gamma)$, $\psi(\gamma)$ can be dropped }
&\asymp  \mysup \Big\vert \Big\langle \sum_{\klkl\in\Z} G_{j,kl}\:\conj{G_{j',k'l'}}\:\widetilde{\Reta}(x,\gamma, x',\gamma'),  \\
&\qquad \times \: M_\nu T_{t+ak} \: \phi(x) \:\conj{ M_{\nu'} T_{t'+ak'} \: \phi(x')} \\
&\qquad \times \: M_{\xi-ak} T_{\tau+bl} \: \phi(\gamma) \:\conj{ M_{\xi'-ak'} T_{\tau'+bl'} \: \phi(\gamma')} \Big\rangle \Big\vert \\
\intertext{by the condition on the support of $\Reta$, by \autoref{lem:trim} we can reduce the summation to $(\klkl)\in \Lambda$ }
&\asymp  \mysup \Big\vert \sum_{\klkl\in\Lambda} G_{j,kl}\:\conj{G_{j',k'l'}} \\ 
&\qquad \times V_\phi \widetilde{\Reta} (t+ak, \nu, \tau+bl, \xi-ak; t'+ak', \nu', \tau'+bl', \xi'-ak') \Big\vert\\
\intertext{by assumption, the submatrix $\left(\GG\right)\eval_\Lambda$ is invertible, thus the finite vector norms are equivalent  $\norm{(\GG)\eval_\Lambda v}_\infty \asymp \norm{v}_\infty$, and we can continue }
&\asymp  \sup_{\nu,\nu' \in \R \mystrut}  \sup_{t,t'\in [0,a) \mystrut} \sup_{\xi,\xi' \in \R \mystrut} \sup_{\tau,\tau'\in [0,b)\mystrut} \sup_{(\klkl) \in\Lambda\mystrut} \\
&\qquad \Big\vert  V_\phi \widetilde{\Reta} (t+ak, \nu, \tau+bl, \xi-ak; t'+ak', \nu', \tau'+bl', \xi'-ak') \Big\vert\\
\intertext{we can return to $\klkl\in\Z$, and exchange the order of supremums }
&\asymp \sup_{\klkl\in\Z} \sup \Big\{ \abs*{V_\phi \widetilde{\Reta} (t, \nu, \tau, \xi; \:   t', \nu', \tau', \xi')} \text{ where } \nu,\nu' \in \R, \\
& \quad  t,\xi\in a[k, k+1), t',\xi'\in a[k', k'+1), \tau \in b[l, l+1), \tau'\in b[l', l'+1) \Big\} \\
&= \norm{V_\phi\Reta}_{L^\infty}  =  \norm{\Reta}_{M^\infty} = \norm{\steta}_{\SMinf}^2. \qedhere
\end{align*}
\end{proof}

We have used the following terminology and lemmas. 
\begin{definition}\label{def:rieszbasisforspan}
A sequence $\{f_n\}$ in a Banach space $X$ is an $\ell^p$-\emph{Riesz basis for its closed linear
span} if there exist constants $A, B>0$ such that
for every finite sequence $\{c_n\} \in \ell^p(\Z)$
\[
A\,\norm{\{c_n\}}_p \leq \norm{\sum_n c_n\,f_n}_{X} \leq B \norm{\{c_n\}}_p. 
\]
\end{definition}

\begin{lemma}\label{lem:Riesz}
Given an $\ell^\infty$-Riesz system $\{\Psi_{mL + j,m'L+j'}\}_{m,m'\in\Z}^{j,j' \in \L}$ of $\Minf(\R^2)$, 
\begin{multline*}
 \sup_{j,j'\in\L} \sup_{m,m'\in\Z} \abs*{\{ a_{m,m'}\} } \asymp \\
 \sup_{j,j' \in \L} \norm[\Big]{\sumZ{m,m'} a_{m,m'} \: V_{\phi\otimes \conj{\phi}} \: \Psi_{mL+j, m'L+j'} (\txtx) }_{L^\infty(\R^4).}
\end{multline*}
\end{lemma}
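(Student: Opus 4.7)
My plan is to combine the defining inequality of an $\ell^\infty$-Riesz basis (\autoref{def:rieszbasisforspan}) with the standard equivalence between the $\Minf(\R^2)$-norm and the $L^\infty(\R^4)$-norm of the STFT with a tensor-product window $\phi\otimes\conj{\phi}$.

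First I would observe that the outer supremum over $\L$ on the left-hand side is vacuous: the coefficients $a_{m,m'}$ do not depend on $j,j'$, so
$\sup_{j,j'\in\L}\sup_{m,m'\in\Z}\abs{a_{m,m'}} = \sup_{m,m'\in\Z}\abs{a_{m,m'}}$.
Next, for each fixed $(j,j')\in \L\times\L$, the sub-family $\{\Psi_{mL+j,m'L+j'}\}_{m,m'\in\Z}$ inherits the $\ell^\infty$-Riesz property from the full family (apply the defining inequality to coefficient arrays that vanish outside this sub-lattice of indices). Because $\L$ is finite, the resulting Riesz constants can be taken uniformly in $(j,j')$, yielding
\[
\sup_{m,m'\in\Z}\abs{a_{m,m'}} \asymp \norm[\Big]{\sumZ{m,m'} a_{m,m'}\,\Psi_{mL+j, m'L+j'}}_{\Minf(\R^2)}
\]
with constants independent of $(j,j')$.

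Then I would replace the $\Minf(\R^2)$-norm by the equivalent $L^\infty(\R^4)$-norm of its STFT with window $\phi\otimes \conj{\phi}\in\Mone(\R^2)$, and move $V_{\phi\otimes\conj{\phi}}$ inside the summation by linearity. Taking $\sup_{j,j'\in\L}$ of both sides then produces exactly the right-hand side of the claimed asymptotic equivalence.

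The main technical obstacle is justifying the interchange of $V_{\phi\otimes\conj{\phi}}$ and the possibly infinite sum, since with $\ell^\infty$ coefficients the series need not converge in $\Minf$-norm. I would handle this by interpreting $\sum_{m,m'} a_{m,m'}\Psi_{mL+j,m'L+j'}$ in the weak-$*$ sense on $\Mone(\R^2)$ and noting that, by definition, $V_{\phi\otimes\conj{\phi}}F(\txtx)$ is the pairing of $F$ against $M_{(\nu,\nu')}T_{(\tau,\tau')}(\phi\otimes\conj{\phi})\in\Mone(\R^2)$, so it commutes pointwise in $(\txtx)$ with weak-$*$ limits of such series. Once this point is addressed, the remainder of the argument is pure bookkeeping.
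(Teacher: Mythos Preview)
Your proposal is correct and follows essentially the same approach as the paper's proof: fix $(j,j')$, use that the corresponding sub-family is an $\ell^\infty$-Riesz basis with uniform bounds, invoke the equivalence $\norm{F}_{\Minf(\R^2)}\asymp\norm{V_{\phi\otimes\conj{\phi}}F}_{L^\infty(\R^4)}$, and then take $\sup_{j,j'\in\L}$. The paper's proof is in fact terser than yours---it does not explicitly mention the vacuity of the left-hand outer supremum, the uniformity of the Riesz constants over the finite set $\L$, or the weak-$*$ interpretation needed to pass the STFT through the sum---so your additional remarks only add rigor rather than diverge from the intended argument.
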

\begin{proof}
For any $j,j'\in \L$, 
$
\{ \Psi_{mL+j, m'L+j'} \}_{m,m'\in\Z}
$
are $\ell^\infty$-Riesz bases for their respective linear spans with the same Riesz bounds, that is, 
\begin{align*}
\sup_{m,m'\in\Z} \abs*{\{ a_{m,m'}\} } &= \norm*{\{a_{m,m'}\}}_{\ell^\infty[m,m']} \\
&\asymp \norm*{\sum\nolimits_{m,m'\in \Z} a_{m,m'} \: \Psi_{mL+j, m'L+j'}}_{\Minf(\R^2)} \\
&\asymp \norm*{\sum\nolimits_{m,m'\in \Z} a_{m,m'} \: V_{\phi \otimes \conj{\phi}} \: \Psi_{mL+j, m'L+j'}}_{L^\infty(\R^4).} \qedhere
\end{align*}
\end{proof}

\begin{lemma}\label{eq:h.to.steta}
With the notation used above, 
\[ 
\sumZ{m} \h(x+aj+amL, x-ak) \empi{amL\gamma} = b^d \sumZ{l} \steta(x-ak, \gamma - bl)\epi{(x+aj)(\gamma-bl)}.
\]
\end{lemma}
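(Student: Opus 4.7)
The plan is to expand each time-varying impulse response $\h$ into its Fourier integral in terms of $\steta$, exchange the $m$-summation with the $\nu$-integral, and then apply Poisson summation on the resulting pure exponential in $m$, which produces a Dirac comb in $\nu$ whose lattice spacing is precisely $b$. Reindexing the lattice sum to match the sign convention yields the right-hand side.

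Concretely, I would start from the identity $\h(x,t) = \int \steta(t,\nu)\:\epi{\nu x}\d\nu$ recalled in \autoref{sec:samp_det_op} and substitute to get
\[
\sumZ{m} \h(x+aj+amL,\,x-ak)\empi{amL\gamma} = \int \steta(x-ak,\nu)\:\epi{\nu(x+aj)} \sumZ{m} \epi{amL(\nu-\gamma)} \d\nu.
\]
The inner exponential sum is a classical Dirac comb in $\nu$: by Poisson summation,
\[
\sumZ{m} \epi{amL(\nu-\gamma)} = \tfrac{1}{aL}\sumZ{l}\delta\!\left(\nu-\gamma-\tfrac{l}{aL}\right) = b\sumZ{l}\delta(\nu-\gamma - bl),
\]
where I used the rectification relation $ab=1/L$, so $1/(aL)=b$. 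Integrating the delta comb against $\steta(x-ak,\nu)\epi{\nu(x+aj)}$ collapses the $\nu$-integral to the sum
\[
b\sumZ{l}\steta(x-ak,\gamma+bl)\:\epi{(\gamma+bl)(x+aj)},
\]
and the substitution $l\mapsto -l$ (which merely reindexes a sum over $\Z$) delivers exactly the claimed right-hand side, with $b^{d}=b$ since $d=1$ in this one-dimensional setting.

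The main obstacle is justifying the Fubini-type exchange of $\sum_m$ and $\int\d\nu$ as well as the Poisson summation, since $\steta$ is only a generalized stochastic process in $\SMinf(\R^2)$ rather than a classical function. This difficulty, however, is only apparent: the identity is always invoked paired against test functions $\phi\otimes\conj\phi\in \Moneone$ inside the proof of \autoref{thm:sufficiency}, and after this pairing both sides become tempered distributions (in fact $\Minf$ functions) for which Poisson summation and interchange of summation with integration are standard. Thus the cleanest way to present the argument is to pair both sides with an arbitrary $\phi\in \Mone(\R^2)$, apply \autoref{thm:H_extension} (so that $\steta$ acts on the resulting $W(A,\ell^\infty)$ element built from the comb and the exponentials), and invoke the distributional Poisson summation formula in that dual setting; the pointwise-looking statement in the lemma is then the customary formal shorthand.
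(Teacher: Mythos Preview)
Your argument is correct and is essentially the same as the paper's. The paper merely packages your computation as the ``Poisson summation formula for Zak transforms'': it recognizes the left-hand side as $\Zak_{aL}\h(\placeholder,x-ak)(x+aj,\gamma)$, invokes the dual-lattice identity $\Zak_{aL}f(x,\gamma)=b^d\epi{x\gamma}\Zak_b\hat f(\gamma,-x)$ (which is proved exactly by your Fourier-expand-and-Poisson-sum maneuver, using $aL\cdot b=1$), and then uses $\widehat{\h(\placeholder,t)}=\steta(t,\placeholder)$ to land on the right-hand side.
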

\begin{proof}
Let the \emph{non-normalized Zak transform}  $\Zak_{\alpha} \colon L^2(\R) \to L^2\left([0, \alpha)\times[0,\alpha^{-1})\right)$ be defined  by
\begin{equation*}\label{eq:defn.Zak}
\Zak_{\alpha}\phi(x,\nu) \coloneqq \sumZ{m} \phi(x-\alpha m )\epi{\alpha m \nu}. 
\end{equation*}
Since $\h(\widehat{\placeholder} , t) = \steta(t,\nu)$, by the Poisson summation formula for Zak transforms \cite{Gro}
\begin{equation*}\begin{split}
\MoveEqLeft \sumZ{m} \h(x+aj+amL, x-ak) \empi{amL\gamma} \\
&= \left(\Zak_{aL} \h(\placeholder, x-ak)\right)  (x+aj, \gamma) \\
&= b^d \epi{(x+aj)\gamma} \, \left(\Zak_b \h(\widehat{\placeholder} , x-ak)\right) (\gamma, -x-aj) \\
&= b^d \epi{(x+aj)\gamma} \, \left(\Zak_b \steta(x-ak, \cdot) \right)(\gamma, -x-aj) \\
&= b^d \epi{(x+aj)\gamma} \, \sumZ{l} \steta(x-ak, \gamma - bl)\empi{(x+aj)bl} \\
&= b^d \sumZ{l} \steta(x-ak, \gamma - bl)\epi{(x+aj)(\gamma-bl)}. \qedhere
\end{split}\end{equation*}
\end{proof}

Let $S_\eps$ denote the $\eps$-\nbhd of a set $S\subseteq \R^n$, that is, 
\[ S_\eps = S+\eps \,\Ball{\R^n}. \]

\begin{lemma}\label{lem:trim}
Let $\Lambda$ index the set of $k\in \Z^n$ such that the $\eps$-\nbhd of the support of $R(x)$ is covered by disjoint translations of a parallelepiped $[0,a) = [0,a_1)\times \dotsb \times [0,a_n)$ along the lattice $a\Z^n$
\begin{equation}\label{eq:supp.of.R}
(\supp R(x))_\eps \subseteq a\bigcup\nolimits_{k \in \Lambda} [k, k+1). 
\end{equation}
Then for any $\phi(x)$ supported on an $\eps$-\nbhd of the origin $\eps\,\Ball{\R^n}$, and for any $t\in [0,a)$, we have  
\[ \sum\nolimits_{k\in\Z^n} a_k \ip*{R(x), T_{t+ak} \phi(x)}  = \sum\nolimits_{k\in\Lambda} a_k \ip*{R(x),  T_{t+ak} \phi(x)}. \]
\end{lemma}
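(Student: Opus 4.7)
The strategy is to show that every term with index $k \in \Z^n \setminus \Lambda$ drops out of the full sum, reducing it to the sum over $\Lambda$. It suffices to prove that $\ip{R(x), T_{t+ak}\phi(x)} = 0$ whenever $t \in [0,a)$ and $k \notin \Lambda$, which I would obtain by placing the support of $T_{t+ak}\phi$ in the complement of $\supp R$.

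First, I would exploit that $\{a[k, k+1)\}_{k \in \Z^n}$ tiles $\R^n$ disjointly by half-open product cubes. Combining this disjointness with the covering hypothesis \eqref{eq:supp.of.R} yields $(\supp R)_\eps \cap a[k, k+1) = \varnothing$ for every $k \notin \Lambda$. Next, for any $t \in [0,a) = [0,a_1)\times\dotsb\times[0,a_n)$ and such $k$, the point $t + ak$ lies componentwise in $a[k, k+1)$, hence $t + ak \notin (\supp R)_\eps = \supp R + \eps\, \Ball{\R^n}$. Unfolding the Minkowski sum, no $y \in \supp R$ lies within distance $\eps$ of $t + ak$, so $\supp R$ is disjoint from $t + ak + \eps\, \Ball{\R^n}$. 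Since $\supp \phi \subseteq \eps\, \Ball{\R^n}$ forces $\supp T_{t+ak}\phi \subseteq t + ak + \eps\, \Ball{\R^n}$, the pairing $\ip{R, T_{t+ak}\phi}$ vanishes. Restricting to the nonzero contributions then gives the claimed identity.

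The argument is essentially formal, so I do not anticipate a substantial obstacle. The only point requiring a little care is the convention for open versus closed $\eps$-neighborhoods: if the inclusion in \eqref{eq:supp.of.R} is interpreted with a closed neighborhood while $\supp \phi$ and $\supp R$ touch at distance exactly $\eps$, one can shrink $\eps$ by an arbitrarily small amount without affecting the covering hypothesis, after which the disjointness is strict and the pairing again vanishes.
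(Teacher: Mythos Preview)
Your proposal is correct and follows essentially the same approach as the paper: both arguments show that each term with $k\notin\Lambda$ vanishes because $\supp T_{t+ak}\phi$ is disjoint from $\supp R$, then conclude by restricting the sum. The only cosmetic difference is that the paper bounds $\supp T_{t+ak}\phi \subseteq [ak,a(k+1))_\eps$ and takes complements in \eqref{eq:supp.of.R} to obtain $\supp R \cap \bigl(\bigcup_{k\notin\Lambda}[ak,a(k+1))\bigr)_\eps=\varnothing$, whereas you work pointwise with the ball $t+ak+\eps\,\Ball{\R^n}$; the underlying disjointness-of-supports idea is identical.
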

\begin{proof}
For all $t\in [0,a)$, 
\begin{equation}\label{eq:supp}
 \supp T_{t+ak}\phi(x) \subseteq \left[ak, a(k+1)\right)_\eps. 
\end{equation}
Taking complements in \eqref{eq:supp.of.R}, we get 
\begin{equation}\label{eq:nothing}
\supp R(x) \cap \left(\bigcup\nolimits_{k \notin \Lambda} [ak, a(k+1))\right)_\eps = \varnothing.
\end{equation}
Then 
\begin{align*}
\MoveEqLeft \sum_{k\in\Z} a_k \ip*{R(x), T_{t+ak} \phi(x)} =  %
\ip[\Big]{R(x), \sum_{k\in\Lambda} a_k T_{t+ak} \phi(x)} +  \ip[\Big]{R(x), \sum_{k\not\in\Lambda} a_k T_{t+ak} \phi(x)}.
\end{align*}
Since by \eqref{eq:supp} the $\supp \sum_{k\not\in\Lambda} a_k T_{t+ak} \phi(x) \subseteq \left( \bigcup_{k\not\in\Lambda} (ak, a(k+1)) \right)_\eps$, which is disjoint from $\supp R(x)$ by \eqref{eq:nothing}, the second summand is zero.
\end{proof}

We will also need the equation \cite[(5)]{PfaZh02} and the following lemma.
\begin{lemma}\label{lem:equivalence}\cite[Lemma 9]{PfaZh02}
Let $\Lambda$ be a fixed finite pattern, $\Lambda \subseteq \{(0,0),\dotsc, (L-1, L-1)\}$ that satisfies \eqref{eq:spd} and let $G \in \C^{L\times K}$. The following are equivalent:
\begin{enumerate}[(i)]
\item For each \nnd $Y\in \C^{K\times K}$, there exists a unique $X\in \C^{L\times L}$ such that 
			\begin{inparaenum}[a)]
			\item $X$ is \nnd,
			\item $\supp X = \Lambda$, and
			\item $Y = GXG^*$. 
			\end{inparaenum}

\item If an hermitian matrix $N\in \C^{L \times L}$ with $\supp N \subseteq \Lambda$ solves the homogeneous equation $0 = GNG^*$, then $N=0$. 

\item The matrix $\GG\eval_\Lambda$ has a left inverse (is full rank). 
\end{enumerate}
\end{lemma}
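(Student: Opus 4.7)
The plan is to prove the equivalence by establishing the cycle (iii)$\Rightarrow$(i)$\Rightarrow$(ii)$\Rightarrow$(iii), leaning throughout on the standard vectorization identity
\[
\Vec(GXG^*) = (\conj{G}\otimes G)\,\Vec(X) = \GG\,\Vec(X),
\]
which translates the linear map $X \mapsto GXG^*$ restricted to matrices with $\supp X \subseteq \Lambda$ into the action of the submatrix $\GG\eval_\Lambda$ on the $\abs{\Lambda}$-dimensional coordinate vector. Throughout, the admissibility condition \eqref{eq:spd} on $\Lambda$ plays an essential role and must be invoked at several of the transitions below.

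For (iii)$\Rightarrow$(i), injectivity of $\GG\eval_\Lambda$ immediately gives uniqueness of a preimage $X$ with $\supp X \subseteq \Lambda$, and since $X \geq 0$ forces $GXG^* \geq 0$, PSD is preserved under the map; existence of a PSD $X$ with support exactly $\Lambda$ for $Y$ in the image follows from the admissibility condition, which ensures that the cone of PSD matrices supported on $\Lambda$ has nonempty interior relative to the Hermitian matrices with support in $\Lambda$. Dually, (ii)$\Rightarrow$(iii) is a short Hermitization argument: given $v \in \C^{\abs{\Lambda}}$ with $\GG\eval_\Lambda v = 0$, arrange its entries into a matrix $N$ supported on $\Lambda$; by \eqref{eq:spd}, $\supp N^* \subseteq \Lambda$ as well, so $N_r = (N+N^*)/2$ and $N_i = (N-N^*)/(2i)$ are each Hermitian with support in $\Lambda$, and both are annihilated by $X\mapsto GXG^*$ by linearity. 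Hypothesis (ii) then forces $N_r = N_i = 0$, whence $N = N_r + iN_i = 0$ and $v = 0$.

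The remaining implication (i)$\Rightarrow$(ii) is the chief obstacle, because it requires converting uniqueness over the PSD cone into a statement about general Hermitian matrices in the kernel of $X \mapsto GXG^*$. I would handle it by a perturbation at an interior point of the cone: using \eqref{eq:spd} to guarantee that all diagonal indices $(k,l,k,l)$ for $(k,l) \in \Gamma$ are included in $\Lambda$, construct a strictly positive definite matrix $X_0$ with $\supp X_0 = \Lambda$ (for instance as a Gram matrix of generic vectors indexed by $\Gamma$, padded with zero rows and columns off $\Gamma$). Given Hermitian $N$ with $\supp N \subseteq \Lambda$ and $GNG^* = 0$, for sufficiently small $\eps > 0$ both $X_\pm = X_0 \pm \eps N$ are PSD with $\supp X_\pm = \Lambda$ (no cancellation occurs for small $\eps$), and $GX_\pm G^* = GX_0G^*$. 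The uniqueness clause of (i) applied to $Y = GX_0 G^*$ then forces $X_0 \pm \eps N = X_0$, hence $N = 0$. The admissibility \eqref{eq:spd} thus enters in an indispensable way both here (to supply the strictly positive centre $X_0$ with the prescribed support) and in the Hermitization step, which is why it cannot be dispensed with in the statement of the lemma.
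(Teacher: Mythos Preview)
The paper does not supply its own proof of this lemma; it is quoted verbatim from \cite[Lemma~9]{PfaZh02}, so there is nothing in the present paper to compare your argument against. Your cycle (iii)$\Rightarrow$(i)$\Rightarrow$(ii)$\Rightarrow$(iii) via the vectorization identity $\Vec(GXG^*)=(\conj G\otimes G)\Vec X$ is the natural route, and your use of the admissibility condition \eqref{eq:spd} in both the Hermitization step (ii)$\Rightarrow$(iii) and the perturbation step (i)$\Rightarrow$(ii) is exactly right.

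Two small points are worth tightening. First, in (i)$\Rightarrow$(ii), your proposed $X_0$ as ``a Gram matrix of generic vectors indexed by $\Gamma$'' would have support all of $\Gamma\times\Gamma$, not $\Lambda$; you need $\supp X_0=\Lambda$ exactly in order to invoke the uniqueness clause of (i). The correct construction is the diagonally dominant one (large entries on $(\gamma,\gamma)\in\Lambda$, small nonzero Hermitian entries on the off-diagonal part of $\Lambda$, zero off $\Lambda$), which the present paper in fact uses elsewhere (see the proof of \autoref{thm:Phist.is.MM}, step (ii)). Second, in (iii)$\Rightarrow$(i) you prove existence only ``for $Y$ in the image,'' whereas (i) as literally written asserts existence for \emph{every} positive semidefinite $Y$; that stronger reading cannot hold in general (take $Y=0$, or any $Y$ of rank exceeding $\rank G$), so (i) should be read as a uniqueness statement, which is what you establish. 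With those two caveats your argument is complete.
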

We are now ready to formulate the first main result about the identification of stochastic operators. 
\begin{theorem}[Sufficient conditions for identifiability] \label{thm:id_stoch}
Let $M$ be a symmetrically $(a,b,\Lambda)$-rectified set such that $L=\frac{1}{ab}$ is prime and $\abs{\Lambda} \leq L^2$. If the submatrix 
\[ 
\GG\eval_\Lambda = \brackets*{\conj{M^l  T^k c} \otimes M^{l'}T^{k'} c }_{(\klkl) \in\Lambda} 
\]
is invertible for $c \in \C^L$, then $\Shah_c(x) = \sumZ{k} c_{k \bmod L} \: \delta(x-a k)$ identifies $\StOPW(M)$.
 If the rectification is \hyperref[lbl:precise]{precise}, then invertibility of $\GG\eval_\Lambda$ is also necessary for the identifiability by $\Shah_c$. 
\end{theorem}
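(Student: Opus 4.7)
The plan is to view the mapping that sends the autocorrelation of a spreading function to the autocorrelation of the received echo as a \emph{single} linear operator $\Phi_{\Shah_c}\colon D \mapsto R_{\H\Shah_c}$ on $\Minf(\R^4)$, where the relation between $D$ and $R_{\H\Shah_c}$ is the one obtained by replacing $R_\steta$ with $D$ in \eqref{eq:Phist}. Identifiability of $\StOPW(M)$ is then precisely the two-sided estimate
\[
A\,\norm{D}_{\Minf(\R^4)} \;\le\; \norm{\Phi_{\Shah_c}(D)}_{\Minf(\R^2)} \;\le\; B\,\norm{D}_{\Minf(\R^4)}
\]
on the linear span of the autocorrelations of operators in $\StOPW(M)$, a span that contains every hermitian distribution supported in $M$ and in particular the difference $R_{\steta_1}-R_{\steta_2}$.

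For the sufficient direction, I would show that the chain of estimates proving \autoref{thm:sufficiency} is \emph{linear} in $R_\steta$: positive definiteness of $R_\steta$ is used nowhere past the initial identity $\norm{\H\Shah_c}^2_{\SMinf}=\norm{R_{\H\Shah_c}}_{\Minf}$, and that identity can simply be bypassed by starting with $\norm{\Phi_{\Shah_c}(D)}_{\Minf(\R^2)}$ in its place. The same sequence of steps --- expansion via \autoref{eq:h.to.steta}, amalgam decomposition via \autoref{lem:Riesz}, and the localisation \autoref{lem:trim} --- terminates in an expression controlled by $\GG\eval_\Lambda$ acting on the coefficients. Invertibility of $\GG\eval_\Lambda$ by hypothesis then converts the $(j,j')$-supremum into a supremum over $(\klkl)\in\Lambda$ without loss, yielding $\norm{D}_{\Minf(\R^4)}\lesssim \norm{\Phi_{\Shah_c}(D)}_{\Minf(\R^2)}$ for every $D\in\Minf(\R^4)$ with $\supp D\subseteq M$. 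Applied to $D=R_{\steta_1}-R_{\steta_2}$ this is the left inequality of \eqref{eq:id.3}. The right inequality is the boundedness of $\Phi_{\Shah_c}$ on $\Minf(\R^4)$, which follows by duality once one checks that, for $\phi,\psi\in\Ball{\Mone(\R)}$, the test element $V_f\phi\otimes\conj{V_f\psi}$ lies in a bounded subset of $\Mone(\R^2)$ when $f=\Shah_c$ --- a consequence of the Wiener-amalgam control developed in \autoref{sec:stoch_mod_spaces} together with the proof technique of \autoref{thm:H_extension}.

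For the necessary direction under precise rectification, assume $\GG\eval_\Lambda$ is singular. The (iii)$\Rightarrow$(i) implication of \autoref{lem:equivalence} then produces distinct nonnegative definite matrices $X_1, X_2\in\C^{L\times L}$ with $\supp X_i\subseteq\Lambda$ and $GX_1 G^*=GX_2 G^*$. Lift each $X_i$ to a distribution $R_i\in\Minf(\R^4)$ by placing a common smooth bump of total mass $(X_i)_{(k,l),(k',l')}$ inside the parallelepiped $\rect^2+(ak,bl,ak',bl')$ for every $(\klkl)\in\Lambda$; preciseness guarantees $\supp R_i = M$, and choosing, e.g., a tensor of identical Gaussian factors preserves positive semidefiniteness, so that $R_i$ arises as the autocorrelation of a Gaussian-realised $\steta_i$ with $\H_i\in\StOPW(M)$. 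Running the computation of \autoref{thm:sufficiency} on $\Phi_{\Shah_c}(R_i)$ identifies the $(j,j')$-entry, on each rectification cell, with the $(j,j')$-entry of $GX_iG^*$; since these coincide, $R_{\H_1\Shah_c}=R_{\H_2\Shah_c}$ while $R_{\steta_1}\neq R_{\steta_2}$, contradicting identifiability. The main obstacle lies precisely here: one must simultaneously preserve positive semidefiniteness (so that a genuine $\H_i\in\StOPW(M)$ is produced), keep $\supp R_i$ exactly inside $M$ (where preciseness is essential to exclude stray cross-terms), and carry out the reduction to $GX_iG^*$ without residual error. None of these constraints intervene in the sufficient direction, whose proof is purely linear book-keeping.
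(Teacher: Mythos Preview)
Your outline matches the paper's proof in both directions. For sufficiency, the paper simply strings together \autoref{thm:H_extension}, \autoref{lem:sm}, and \autoref{cor:Reta_less_RShah} into the chain \eqref{eq:main_chain} for a single $\H$; your observation that the computation is linear in $R_\steta$ is exactly what is needed to pass from this to the difference estimate \eqref{eq:id.3}, and the paper does not spell that step out. For necessity, the paper does what you propose: invoke \autoref{lem:equivalence} to obtain a nonzero hermitian $N$ in the kernel, take the two positive-definite matrices to be $I$ and $I+N/K$ for $K$ large, and lift them to spreading functions $\steta=\sum_{k,l}\a_{k,l}\,\chi_{[0,a)\times[0,b)}(t-ak,\nu-bl)\empi{blt}$, invoking the Zak-transform identity (equation~(5) of \cite{PfaZh02}) rather than rerunning the calculation of \autoref{thm:sufficiency}.

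Two small corrections to your write-up. First, $V_f\phi\otimes\conj{V_f\psi}$ is a function on $\R^4$, not $\R^2$, and for $f=\Shah_c\in\Minf$ it lies only in $W(A,\ell^\infty)(\R^4)$, not in $\Mone(\R^4)$; the pairing with $D$ is nevertheless bounded because $D$ has compact support, precisely via the extension mechanism of \autoref{thm:steta_extension}, which you correctly flag. Second, Gaussian bumps are not compactly supported and will leak across cell boundaries; use compactly supported bumps (the paper takes indicator functions) so that $\supp\steta_i$ is compact and $\supp R_{\steta_i}\subseteq M$, as required for membership in $\StOPW(M)$.
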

\begin{proof}[Proof of sufficiency]
By  \autoref{thm:H_extension}, \autoref{lem:sm},  \autoref{cor:Reta_less_RShah} and the definition of  operator norms, for any $\H\in\StOPW(M)$ we have 
\begin{equation}\label{eq:main_chain}\begin{multlined}
\norm{\H }_{\StOPW} \leqsim \norm{\steta}_{\SMinf} = \norm{\Reta}^{\frac12}_{\Minf(\R^{4d})}  \leqsim \norm{R_{\H \Shah_c}}^{\frac12}_{\Minf(\R^{2d})}  \\
=\norm{\H \Shah_c}_{\SMinf} \leq \norm{\H}_{\StOPW}\norm{\Shah_c}_{\Minf} \asymp \norm{\H}_{\StOPW.}
\end{multlined}\end{equation}
which means that $\StOPW(M)$ is identifiable by $\Shah_c$. 
\end{proof}

\begin{proof}[Proof of the necessity]
Given an $(a,b, \Lambda)$-symmetrically rectified set $M$ such that the corresponding Gabor submatrix $(\GG)\eval_\Lambda$ is not invertible, by \autoref{lem:equivalence} we can find a non-trivial hermitian matrix $N$ supported on $\Lambda$ in the kernel of $(\GG)\eval_\Lambda$. 
It remains to design a pair of stochastic channels $\H, \H'$ with spreading functions $\steta$ and $\steta'$, respectively, that are indistinguishable by $\Shah_c$. 
By mixing equation (5) from \cite{PfaZh02}, we get for all $(t,\nu)\in [0,a)\times [0,b)$
\[
\brackets[\Big]{\quad  \underbrace{\Zak_{a L}(\H \Shah_c)(t+ap, \nu)}_{\mathbf{Z}_p(t,\nu)} \quad  }_{p=0}^{L-1} \eqms G \: \brackets[\Big]{\quad \underbrace{\steta(t+a k, \nu+b l) \epi{b l t}}_{\steta_{k,l}(t,\nu)}\quad }_{k,l=0.}^{L-1}
\]

\renewcommand{\v}{\boldsymbol{v}}
\newcommand{\Zb}{\boldsymbol{Z}}
\newcommand{\mychi}{\chi_\mysmallbox}
Denoting $\Zb(t,\nu) = [ \Zb_p(t,\nu) ]_{p=0}^{L-1}$ and $\v = [\steta_{k,l}]_{k,l=0}^{L-1}$ (similarly, $\Zb', \v'$),
\begin{align*}
\Vec \E{\Zb \Zb^*}(\tntn) &= (\GG)\eval_\Lambda \Vec \E{\v \v^*}(\tntn), \\
\Vec \E{\Zb' (\Zb')^*}(\tntn) &= (\GG)\eval_\Lambda \Vec \E{\v' (\v')^*}(\tntn).
\end{align*}
We let $\stoch{E} = \braces*{\e_{k,l}}_{k,l=1}^L$ an independent collection of $L^2$ random variables in $\RV$, that is, $\E{\e_{k,l} \conj{\e_{k'\!,l'}}} = \delta_{k,k'} \delta_{l,l'} \coloneqq I(\klkl)$, an identity matrix in $\C^{L^2\times\L^2}$. 
Observe that for a sufficiently large $K>0$, both $I$ and $I + N/K$ are positive-definite. 
Therefore, there exist random vectors $\a(k,l), \a'(k,l)\colon \Z_{L^2} \to \RV$ such that $\E{\a \a^*} = I$ and $\E{\a' (\a')^*} = I+N/K$. 
We set
\[ 
\steta(t,\nu) = \sum_{k,l =0}^{L-1} \a_{k,l} \: \mychi(t - a k, \nu - b l) \empi{b l t}, 
\]
and define $\steta'(t,\nu)$ similarly with $\a'$. This implies $\v_{k,l}(t,\nu) = \a_{k,l} \mychi(t, \nu)$. 
Thus 
\[
\E{ \v \v^*}(\tntn)  = \E{ \v' (\v')^*}(\tntn) + \frac{1}{K} N \: \mychi(t,\nu)\:\mychi(t',\nu').
\]
By construction of $N$, it follows that 
\[
\E{\Zb \Zb^*}(\tntn)  = \E{\Zb' \Zb'^*}(\tntn),
\]
Since Zak transform is invertible, $\Zb$ and $\Zb'$ uniquely determine $\H \Shah_c$ and $\H' \Shah_c$. 
Therefore, we have $\H_1 \neq \H_2$ while $R_{\H_1 \Shah_c}(x,x')  = R_{\H_2 \Shah_c}(x,x')$, that is, $\Phist$ is not bounded below, and $\StOPW(M)$ is not identifiable. 
\end{proof} 

We have shown that the identifiability of the class $\StOPW(M)$ with 4D volume of the support set of the \acorr of the spreading function $\steta(t,\nu)$ less than one is determined by the invertibility of the matrix $\GG\eval_\Lambda$. 

Unlike the deterministic case, where $\OPW(M)$ allows identifiability if the 2D area is smaller than one, in the stochastic case the geometry of the support set $M$ plays a nontrivial role. 
In \cite{PfaZh02} we describe the defective support sets $M$ such that for any refinement of the grid (that is, for $a,b$ arbitrarily small) the corresponding matrix $\GG\eval_\Lambda$ is not invertible, even though the volume $\vol^+(M)$ is less than one, thus, the volume requirement is not sufficient to guarantee identifiability by weighted delta trains. In \autoref{sec:defective.patterns} we show that this phenomenon cannot be fixed by replacing $\Shah_c$ with a different tempered distribution. 

But first, we show that the volume requirement is absolutely \emph{necessary} for identifiability of  $\StOPW(M)$. We prove that in the case of $\vol(M)>1$, the class $\StOPW(M)$ is not identifiable with \emph{any} sounding signal $f\in \Minf(\R)$.

\section{A necessary criterion for the identifiability of \texorpdfstring{$\StOPW(M)$}{StOPW(M)}} \label{sec:necessity}
The main goal of this section is to prove that the class $\StOPW(M)$ is not identifiable if $\vol^-(M)>1$ (\autoref{thm:necessity}). 

In fact, we prove a stronger and more technical result, \autoref{thm:Phist.is.MM}, that ties stability of the evaluation operator $\Phist$ to the existence of a bounded left inverse of a certain bi-infinite matrix dependent on $f$ and on the geometry of $M$. 
By \autoref{defn:id.3},  the stability of $\Phist$ is equivalent to identifiability of the class $\StOPW(M)$ by a sounding signal $f$, thus proving \autoref{thm:necessity}, as well as Theorems \ref{thm:k2k4} and \ref{thm:k33} below. 

In the following, we shall replace the set $M$ with a subset $U$ that has volume larger than one, and that has a precise rectification with $\lambda$ having cardinality being a perfect square. As $\StOPW(M)$ contains $\StOPW(U)$, non-identifiability of the latter implies the same for the former.

The proof of \autoref{thm:Phist.is.MM} uses ideas from \cite[Theorem 3.6]{KozPfa} and \cite[Theorem 4.1]{PfaWal}, but it requires also a sophisticated analysis of stochastic modulation spaces. We will need the following basic results from time-frequency analysis, given here without proof. 
\begin{theorem}\label{thm:gaborframeexistence}
\cite{Lyu92,SW92,Sei92b} The Gabor system
$\{M_{a' p}\, T_{b' q} \, g_0 \}_{p,q \in\Z}$ is an $\ell^\infty$-Riesz basis for $\Minf(\R^d)$ if $a',b'>0$ satisfy $a'b'<1$ and  $g_0(x) = e^{-x^2/2}$.
\end{theorem}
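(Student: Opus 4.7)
The statement is classical, going back to the deep density theorems for sampling in the Bargmann--Fock space \cite{Lyu92,SW92,Sei92b}. The plan is three-fold: (i) identify the Gaussian Gabor coefficients with point evaluations of entire functions; (ii) apply the Lyubarskii--Seip--Wallst\'en density theorem to obtain the result in $L^2$; (iii) transfer the statement to the modulation space $\Minf$ via Feichtinger--Gr\"ochenig coorbit theory.

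For step (i), one uses the Bargmann transform $B\colon L^2(\R)\to\mathcal{F}^2$ into the Bargmann--Fock space of entire functions that are square-integrable against a Gaussian measure. Under $B$, the window $g_0(x)=e^{-x^2/2}$ maps (up to a scalar) to the constant function $1$, and a direct calculation shows that the short-time Fourier transform $V_{g_0}f(t,\nu)$ equals $Bf(\nu-it)$ multiplied by a unimodular Gaussian weight. Consequently, the Gabor analysis coefficients $\ip{f,\,M_{a'p}T_{b'q}g_0}$ are --- up to such a weight --- precisely the point values of the entire function $Bf$ on the lattice $\Lambda=\{a'p-ib'q\colon p,q\in\Z\}\subset\C$.

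For step (ii), the Lyubarskii--Seip--Wallst\'en theorem asserts that $\Lambda$ is a set of sampling for $\mathcal{F}^2$ if and only if its Beurling density $1/(a'b')$ strictly exceeds one, i.e., $a'b'<1$. Translated back via $B$, this is the $L^2$-frame inequality $\norm{f}_2^2\asymp \sum_{p,q}\abs{V_{g_0}f(b'q,a'p)}^2$. For step (iii), since $g_0\in \Mone(\R^d)$ belongs to Feichtinger's algebra, coorbit theory guarantees that the Gabor frame operator $S=\sum_{p,q}\ip{\placeholder,M_{a'p}T_{b'q}g_0}\,M_{a'p}T_{b'q}g_0$ extends to a Banach-space isomorphism on every modulation space $M^p$, $1\le p\le\infty$, with canonical dual window $\tilde g_0:=S^{-1}g_0$ still in $\Mone$. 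Wiener-amalgam estimates then yield the norm equivalences $\norm{f}_{\Minf}\asymp \sup_{p,q}\abs{V_{\tilde g_0}f(b'q,a'p)}$ and, dually, $\norm{\sum_{p,q} c_{p,q}\,M_{a'p}T_{b'q}g_0}_{\Minf}\asymp \norm{\{c_{p,q}\}}_{\ell^\infty}$ --- exactly the claimed $\ell^\infty$-Riesz basis inequality in $\Minf(\R^d)$.

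The principal obstacle lies in the transfer to $\Minf$, a non-reflexive, non-separable space. It rests on the delicate fact that the canonical dual $\tilde g_0$ remains in the Feichtinger algebra $\Mone$. The classical route uses the Janssen representation of $S$ together with Wiener's $1/f$-lemma for the non-commutative twisted convolution algebra; the subcritical density $a'b'<1$ is precisely what ensures the absolute convergence of the associated kernel series and thereby the invertibility of $S$ on every $M^p$.
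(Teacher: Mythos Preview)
The paper does not prove this theorem: it is quoted as a known result from \cite{Lyu92,SW92,Sei92b} under the rubric ``basic results from time-frequency analysis, given here without proof.'' So there is no argument in the paper to compare your proposal against. Your three-step outline --- Bargmann transform, Lyubarskii--Seip--Wallst\'en density, then coorbit-theoretic transfer to $M^p$ --- is exactly the classical route that the cited references (together with the later Gr\"ochenig--Leinert spectral invariance machinery you allude to for step~(iii)) establish, and it is a fair high-level summary.

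Two remarks on the sketch itself. First, the cited works treat $d=1$; the statement here is written for $\Minf(\R^d)$, so for $d>1$ one needs either a tensor-product argument (the Gaussian factors, and the lattice is separable) or the multivariate extensions of the Fock-space sampling theorem --- worth a sentence. Second, be careful with the terminology: for $a'b'<1$ the Gaussian Gabor system is overcomplete in $L^2$, hence a \emph{frame} rather than a Riesz basis there; what the paper actually uses downstream (in the proof of \autoref{thm:Phist.is.MM}, step~(v)) is the stability of the \emph{analysis} map $C_{g_0}\colon\SMinf\to\Slinf$, i.e., the Banach-frame inequality in $\Minf$. Your step~(iii) delivers precisely that via the boundedness of $S^{-1}$ on $\Minf$ and $\tilde g_0\in\Mone$, so the substance is right even if the ``$\ell^\infty$-Riesz basis'' label in the theorem statement is somewhat loose.
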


Let $\eta_0 \in \S(\R)$ with values in $[0,1]$ and
\[ 
\eta_0(x) =
\begin{cases}
  1, & x \in \left[\frac12 - \frac{1}{2\lambda}, \frac12 + \frac{1}{2\lambda}\right), \\    
  0, & x \notin [0,1].
\end{cases}
\] 
Define $\etarect (t,\nu)=\eta_0(t/a) \otimes\eta_0(\nu/b)$.   Then $\supp \etarect  \subseteq \rect = [0,a)\times [0,b)$. 
\begin{lemma}\label{lem:elementarybound} \cite[Lemma 4.12]{Pfander}
Fix $\lambda>1$ with $1<\lambda^4<\tfrac{\Aleph}{L}$ and choose $\etarect$ as above. Then the operator $P\in \OPW^{\infty}([0,a)\times[0,b))$ with $\etarect$ as its spreading function has the following properties.
\begin{enumerate}[a)]
\item The families 
\begin{align*}
\mathrm{H} &=  \braces{\eta_{\klmn}(t,\nu)}_{\klmn\in\Z} = \braces{M_{\left( \lambda k/a,\lambda l /b\right)} \: T_{\left(a m, b n\right)}\: \etarect(t,\nu)}_{\klmn\in\Z},\\
\Eta &= \braces{\eta_{\klmn}(t,\nu) \otimes \conj{\eta_{\klmnp}(t',\nu')}}, \\
\P &= \{ P_{k,l,m,n} \}_{\klmn \in \Z}  = \braces*{  M_{\lambda k/a}\: T_{a m -\lambda l / b }\: P\: T_{\lambda l /b} \:  M_{bn-\lambda k/a}  }_{\klmn\in\Z}
\end{align*}
are $\ell^\infty$-Riesz bases for their closed linear spans in $\Minf(\R^2)$, $\Minf(\R^4)$ and $\OPW^{\infty}([0,a)\times[0,b))$, respectively. 

\item $P$  is a time-frequency localization operator in the following sense: there exists  a function
$d_0\colon \R\rightarrow\R_0^+$,  which decays rapidly at infinity,
that is, $d_0(x)=\littleo{x^{-r}}$ for all $r\in \N$, and which has the
property that for all $f\in \Minf(\R)$ we have for $x\in \R$ and $\xi \in \widehat{\R}$, 
\[ 
\abs{Pf(x)}\leq \norm{f}_{\Minf(\R)}\,d_0(x), \quad \text{ and }  \quad \abs{\widehat{Pf}(\xi)}\leq \norm{f}_{\Minf(\R)}\,d_0(\xi).
\]
\end{enumerate}
\end{lemma}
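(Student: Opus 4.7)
The plan is to establish the Riesz basis properties in part (a) by exploiting the compact support of $\etarect$ in the fundamental rectangle $\rect$ together with the sub-critical lattice density ensured by $\lambda>1$, and then to read off part (b) directly from the Schwartz decay of $\eta_0$ and its Fourier transform.

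For the family $\mathrm{H}$, first note that since $\supp\etarect\subseteq\rect=[0,a)\times[0,b)$, the translates $T_{(am,bn)}\etarect$ have pairwise disjoint supports that tile $\R^2$ along the lattice $a\Z\times b\Z$. On each such cell, after rescaling to the unit square, the modulations by $(\lambda k/a,\lambda l/b)$ act as complex exponentials $e^{2\pi i(\lambda k u+\lambda l v)}$, and because $\lambda>1$ the localized family $\{e^{2\pi i(\lambda k u+\lambda l v)}\eta_0(u)\eta_0(v)\}_{k,l\in\Z}$ is a Riesz sequence by a standard undersampled Fourier-series argument. Using the STFT characterization of $\Minf(\R^2)$ (cf.\ \eqref{eq:STFT}) together with the fact that the STFT of a compactly supported function is essentially localized near that support in phase space, the cellwise Riesz estimates glue into the global $\ell^\infty$-Riesz basis property for $\mathrm{H}$ in $\Minf(\R^2)$. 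The family $\Eta$ has exactly the same structure in $\R^4$: its elements are translates and modulations of the four-dimensional window $\etarect\otimes\conj{\etarect}$ supported in $\rect\times\rect$ at sub-critical density with parameter $\lambda$, so the argument for $\mathrm{H}$ transfers verbatim to $\Minf(\R^4)$. Finally, for $\P$: the spreading function of $P_{\klmn}$ is, up to phase factors from commuting time and frequency shifts through $P$, precisely $\eta_{\klmn}$, and by \autoref{thm:kernel} the map sending an operator in $\OPW^\infty(\rect)$ to its spreading function is an isometric isomorphism onto the subspace of $\Minf(\R^2)$ of functions supported in $\rect$; hence the Riesz basis property of $\P$ follows from that of $\mathrm{H}$.

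For part (b), I would compute the kernel of $P$ explicitly. Since $\etarect(t,\nu)=\eta_0(t/a)\eta_0(\nu/b)$ is separable, the kernel decomposes as
\[
\kernel(x,y)=b\,\eta_0((x-y)/a)\,\widehat{\eta_0}(-bx),
\]
which gives $Pf(x)=b\,\widehat{\eta_0}(-bx)\cdot(\eta_0(\cdot/a)*f)(x)$. The convolution with $\eta_0(\cdot/a)\in\S(\R)\subset\Mone(\R)$ is uniformly bounded by a constant multiple of $\norm{f}_{\Minf(\R)}$, while $b\,\widehat{\eta_0}(-bx)$ is Schwartz-class in $x$; their product yields the estimate $\abs{Pf(x)}\leq\norm{f}_{\Minf}d_0(x)$ with $d_0$ of rapid decay. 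The analogous estimate for $\widehat{Pf}(\xi)$ follows from the symmetry of the construction under the symplectic Fourier transform: the Kohn-Nirenberg symbol $\KN(x,\xi)=\FT_s\etarect(x,\xi)$ also factors as a product of Schwartz functions in $x$ and $\xi$, and reading off the dual kernel yields the matching decay in frequency.

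The main obstacle I anticipate is the passage from the local cellwise Fourier estimates to a global $\ell^\infty$-Riesz basis statement in the modulation-space setting, since \autoref{thm:gaborframeexistence} only covers Gaussian windows on $L^2$. The compact support of $\etarect$ makes the cells genuinely disjoint, but one must still verify that the STFT with a fixed smooth analysis window interacts properly with the cell decomposition, requiring explicit control on how $V_\phi$ distributes mass across neighboring cells. Once this local-to-global passage is handled, the remaining steps reduce to well-known facts from time-frequency analysis and the kernel theorem.
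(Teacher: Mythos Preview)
The paper does not prove this lemma; it is quoted from \cite[Lemma~4.12]{Pfander}, so there is no in-paper argument to compare against. Your outline matches the approach of that reference: part~(a) is a Gabor Riesz-sequence statement for a Schwartz window on an undersampled lattice, and part~(b) follows from the factorization $Pf(x)=b\,\widehat{\eta_0}(-bx)\cdot(\eta_0(\cdot/a)\ast f)(x)$ together with the rapid decay of $\widehat{\eta_0}$.

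Two points deserve tightening. First, your claim that the spreading function of $P_{\klmn}$ equals $\eta_{\klmn}$ ``up to phase factors'' understates what happens: a direct computation shows that the extra phase depends on $(t,\nu)$, so the spreading functions of the family $\P$ differ from those in $\mathrm{H}$ by an additional cell-dependent modulation $M_{(-bn,-am)}$ and a unimodular constant. This is harmless---a fixed modulation on each cell is an isometry and preserves the Riesz property---but it should be stated explicitly rather than hidden under ``phase factors''. Second, the local-to-global obstacle you flag is most cleanly bypassed not by ad~hoc STFT localization but by the general principle that a Gabor system with window in $\Mone$ which is an $L^2$-Riesz sequence is automatically an $\ell^p$-Riesz sequence in $M^p$ for every $1\le p\le\infty$; the $L^2$ case then follows immediately from your disjoint-support and undersampled-exponentials argument. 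Finally, your appeal to \autoref{thm:kernel} for an ``isometric isomorphism'' between $\OPW^\infty(\rect)$ and the spreading-function side is a slight overreach: that result gives only existence and uniqueness of the spreading function, and the norm equivalence you need must be supplied separately.
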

\begin{theorem}\label{thm:Phist.is.MM} 
Fix $f \in \Minf(\R)$. 
Let $U$ be a bounded and measurable set in $\R^4$ with a \hyperref[defn:M.is.rectified]{precise $(a,b,\Lambda)$-rectification}, where $L=\frac{1}{ab}$ is prime, and $\abs{\Lambda} = J^2$ for some $J>0$. 
 Let
\begin{equation}\label{eq:tilde.Lambda}
\Ltilde\coloneqq \Set{(\klmn,\klmnp)}{\klkl\in\Z, (\mnmn)\in \Lambda} \subseteq \Z^4\times\Z^4
\end{equation}
 be a cylinder set extruded from $\Lambda$, and form the bi-infinite matrix
\[
(\MM)\eval_{\Ltilde} = \brackets[\Big]{\M_{p,q; \klmn} \: \conj{\M_{p'\!,q'; \klmnp}} }_{\pqpq\in\Z^4}^{(\klmn, \klmnp)\in\Ltilde}
\]
from a subset of columns of the tensor product matrix $\MM$ indexed by $\Ltilde$, where 
\begin{equation*}\label{eq:M}
\M_{ p,q; \klmn} = \ip*{M_{\lambda k / a}\: T_{a m + \lambda l / b  } \: P \: T_{-\lambda l / b} \: M_{ b n- \lambda k / a} \, f, \: M_{\lambda^2 \alpha p} \: T_{\lambda^2 \beta q} \: g_0},
\end{equation*}
with $\lambda, \alpha, \beta>0$ positive constants such that $\lambda^4 < \min(\frac{J}{L}, \frac{1}{\alpha \beta})$, $g_0(x) = e^{-\frac{x^2}{2}}$ a gaussian window, and $P$ is a time-frequency localization operator as defined in \autoref{lem:elementarybound}.

If  the matrix $(\MM)\eval_{\Ltilde}$ is not stable, then the evaluation operator 
\[ 
\Phi^{\text{St}}_f\colon\Minf(\R^4) \to \Minf(\R^2),\quad R_{\steta_{\H}} \mapsto \RHf
\]
defined by \eqref{eq:Phist} is \emph{not} bounded below, and inequality \eqref{eq:id.3} fails to hold.
\end{theorem}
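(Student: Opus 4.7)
I would prove the contrapositive: assume $\Phi_f^{\text{St}}$ is bounded below in the sense of \autoref{defn:id.3} and deduce that $(\MM)\eval_{\Ltilde}$ is bounded below. The guiding idea is to convert both the input norm $\norm{R_{\steta_{\H}}}_{\Minf(\R^4)}$ and the output norm $\norm{R_{\H f}}_{\Minf(\R^2)}$ into $\ell^\infty$-norms of coefficient sequences using two Riesz systems: the family $\Eta$ supplied by \autoref{lem:elementarybound}(a) on the input side, and the tensor Gabor system $\{M_{\lambda^2\alpha p}T_{\lambda^2\beta q}g_0 \otimes \conj{M_{\lambda^2\alpha p'}T_{\lambda^2\beta q'}g_0}\}$ on the output side, which is an $\ell^\infty$-Riesz basis for $\Minf(\R^2)$ by \autoref{thm:gaborframeexistence} since $\lambda^4\alpha\beta<1$. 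The coordinate change between these two Riesz systems turns out to be exactly $(\MM)\eval_{\Ltilde}$.

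Given any finitely supported hermitian sequence $a=(a_{\klmn,\klmnp})_{(\klmn,\klmnp)\in\Ltilde}$, set
\[
R = \sum_{(\klmn,\klmnp)\in\Ltilde} a_{\klmn,\klmnp}\,\eta_{\klmn}\otimes\conj{\eta_{\klmnp}}.
\]
Since modulations preserve support and $(\mnmn)\in\Lambda$ in each term, $\supp R\subseteq U\subseteq M$, and \autoref{lem:elementarybound}(a) yields $\norm{R}_{\Minf(\R^4)}\asymp\norm{a}_{\ell^\infty(\Ltilde)}$. For $K>0$ large enough, both kernels $K\delta_{\klmn,\klmnp}\pm a_{\klmn,\klmnp}$ are positive semi-definite on the (finite) active index set; accordingly, there exist independent random coefficient collections $\{\a_{1,\klmn}\}, \{\a_{2,\klmn}\}\subset\RV(\Omega)$ realizing them as second-order statistics. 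The stochastic processes $\steta_i \coloneqq \sum_{\klmn}\a_{i,\klmn}\,\eta_{\klmn}$ lie in $\SMinf(\R^2)$, and the admissibility of $\Lambda$ in \autoref{defn:M.is.rectified} forces the diagonal contributions $(\klmn,\klmn)$ into $\Ltilde$, so the associated $\H_1,\H_2$ belong to $\StOPW(M)$ and satisfy $R_{\steta_1}-R_{\steta_2}=R$.

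Linearity together with \autoref{thm:H_extension} gives $\H_i f = \sum_{\klmn}\a_{i,\klmn}\,\tilde P_{\klmn}f$ in $\SMinf(\R)$, where $\tilde P_{\klmn}\coloneqq M_{\lambda k/a}\,T_{am+\lambda l/b}\,P\,T_{-\lambda l/b}\,M_{bn-\lambda k/a}$ is the localization operator featured in the definition of $\M_{p,q;\klmn}$. Consequently,
\[
\Phi_f^{\text{St}}(R) = R_{\H_1 f}-R_{\H_2 f} = \sum_{(\klmn,\klmnp)\in\Ltilde} a_{\klmn,\klmnp}\,\tilde P_{\klmn}f \otimes \conj{\tilde P_{\klmnp}f}.
\]
By the $\ell^\infty$-Riesz basis property of the tensor Gabor system on $\Minf(\R^2)$ and the identity $V_{g_0}(\tilde P_{\klmn}f)(\lambda^2\alpha p,\lambda^2\beta q) = \M_{p,q;\klmn}$, the sampled Gabor transform of $\Phi_f^{\text{St}}(R)$ equals $(\MM)\eval_{\Ltilde}\,a$, whence
\[
\norm{\Phi_f^{\text{St}}(R)}_{\Minf(\R^2)} \asymp \norm{(\MM)\eval_{\Ltilde}\,a}_{\ell^\infty}.
\]
The assumed bound \eqref{eq:id.3} then produces
\[
\norm{a}_{\ell^\infty}\asymp\norm{R}_{\Minf(\R^4)}\lesssim\norm{\Phi_f^{\text{St}}(R)}_{\Minf(\R^2)}\asymp\norm{(\MM)\eval_{\Ltilde}\,a}_{\ell^\infty}.
\]
Extending from hermitian to general sequences by splitting into hermitian and skew-hermitian parts (and multiplying the latter by $i$) yields the desired stability of $(\MM)\eval_{\Ltilde}$, contradicting the hypothesis.

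The principal technical obstacle is the output-side norm identification: one must verify the $\ell^\infty$-Riesz basis property of the tensor Gabor system on $\Minf(\R^2)$ at the sampling density $\lambda^4\alpha\beta<1$, and check that sampling $V_{g_0}(\tilde P_{\klmn}f)$ on the lattice $\lambda^2\alpha\Z\times\lambda^2\beta\Z$ reproduces $\M_{p,q;\klmn}$ verbatim. The remaining steps---constructing independent random variables with prescribed covariance, exchanging the finite sum and the expectation, and passing from hermitian to arbitrary sequences---are more routine and follow the same time-frequency analytic strategy used in \cite[Theorem~3.6]{KozPfa} and \cite[Theorem~4.1]{PfaWal}.
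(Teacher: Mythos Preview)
Your argument is correct and follows essentially the same route as the paper's proof: both exploit the $\ell^\infty$-Riesz basis $\Eta$ from \autoref{lem:elementarybound} on the input side, the Gabor system $\{M_{\lambda^2\alpha p}T_{\lambda^2\beta q}g_0\}$ on the output side (via \autoref{thm:gaborframeexistence}), and the trick of realizing a hermitian sequence as a difference of two positive-definite covariances by adding a large multiple of the identity. The only cosmetic differences are that you frame the argument as a contrapositive and work directly with sequences supported on $\Ltilde$, whereas the paper argues directly and routes through an auxiliary factorization $QQ^*=Y$ with $\supp Y=\Lambda$ to impose the pattern; neither choice changes the substance.
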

\begin{proof}
We denote $\Gamma$ the projection of $\Lambda$ onto the first two (and by symmetry, the second two) indices, that is,  $\Gamma = \{ \gamma = (m,n) \mid (m,n,m,n) \in \Lambda \}$. Clearly, $\Lambda \subset \Gamma \times \Gamma$, and $\abs{\Gamma} \geq L$. 
\renewcommand*{\d}{\boldsymbol{d}}   
\newcommand{\Xpattern}{Y}

\begin{figure}[ht]
\includegraphics{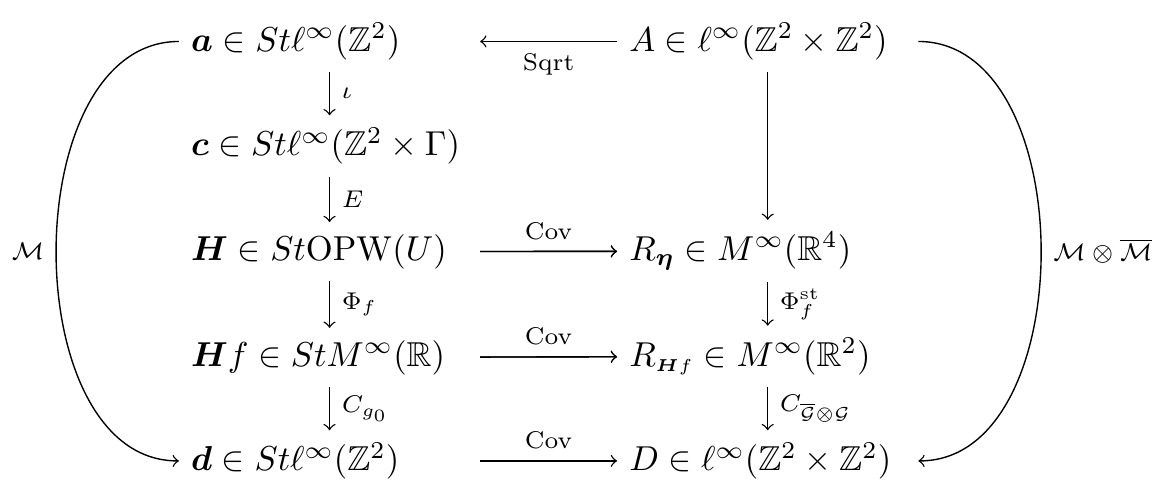}
\caption{The construction diagram.\label{fig:diagram}}
\end{figure}
We follow the construction depicted in \autoref{fig:diagram}, one mapping at a time. 
\begin{enumerate}[(i)]
\item We start with an arbitrary \nnd bi-infinite matrix $A \in \ell^\infty_{+}(\Z^2\times\Z^2)$, and its matrix square root $\B \in \ell^\infty_{+}(\Z^2\times\Z^2)$ \cite{Chui}. 
Let \label{eq:E.stoch} $\stoch{E}$ \label{pg:stoch.E} be a fixed collection of independent random variables that forms a countable basic sequence in $\RV$ indexed by $\Z^2$. Let $\a = \B \stoch{E}$ be a discrete stochastic process in 
\begin{multline*}
\Slinf(\Z^d) = \ell^\infty[\Z^d; \RV] = \\
\SetBig{\a \colon \Z^d \to \RV}{\norm{\a}_{\Slinf(\Z^d)} = \sup_{z\in\Z^d} \norm{\a_z}_{\RV} < \infty}.
\end{multline*}
Similarly to the proof of \autoref{lem:sm}, it is easy to see that $\norm{\a}_{\Slinf(\Z^d)} = \norm{\Ra}_{\ell^\infty(\Z^{2d})}^{\frac12}$, where $\Ra$ stands for a bi-infinite matrix of covariances $\E{\a \a^*}$. 
The map $\Sq\colon A \mapsto \a$ is bounded and bounded below, since 
\[
\norm{\a}^2_{\Slinf(\Z^2)}= \norm{\Ra}_{\infty} =  \norm{\E{\B \stoch{E} \stoch{E}^* \B^*}}_{\infty} = \norm{\B \B^*}_{\infty} = \norm{A}_{\infty.}
\] 
\item As the set $\Gamma$ is finite, we can trivially identify $\Slinf(\Z^2)$ with $\Slinf(\Z^2 \times \Gamma)$. We can now relabel and \enquote{decorrelate} in part the random variables $(\a_{k,l})_\gamma = \a_{k, l\abs{\Gamma}+\gamma}$ to produce the desired \acorr pattern $\Lambda$ in the $\gamma$ variable by setting 
\[ \c_{\klg} = \sum_{i\in\Gamma} Q_{\gamma, i} \, (\a_{\k, l})_i.
\]
Here, $Q\colon\Gamma\times\Gamma\to\C$ is an invertible scalar-valued matrix that satisfies $Q Q^* = \Xpattern$ with $\Xpattern$ such that $\supp \Xpattern = \Lambda$. 
A choice of a positive definite matrix $\Xpattern$ with a given admissible support set $\Lambda$ is always possible by constructing a strongly diagonally dominant matrix and using that $(\gamma, \gamma) \in \Lambda$ for $\gamma \in \Gamma$. 
The covariance of $\c$ then satisfies $\E{\c_{\klg} \, \c_{\klgp}^*} = 0$ for all $(\gamma, \gamma') \notin \Lambda$, that is, it is supported on a cylinder $\Ltilde$.
We have the norm equivalence
\begin{multline*}
\norm{\c}_{\Slinf(\Z^2\times\Gamma)} = \sup_{\kl} \: \sup_{\gamma} \norm{\c_{\kl,\gamma}}_{\RV} \\
= \sup_{\kl} \: \sup_{\gamma} \norm{(Q \, \a_{\kl})_{\gamma}}_{\RV} \asymp \norm{\a}_{\Slinf(\Z^2\times\Gamma),}
\end{multline*}
that is, the mapping $\iota\colon \Slinf(\Z^2) \to \Slinf(\Z^2\times \Gamma), \quad \a \mapsto \c$ is also bounded and bounded below. 

\item We define a synthesis map $E\colon \Slinf(\Z^2\times \Gamma) \to \StOPW(U)$
\begin{equation*}\label{eq:synthesismap}\begin{split}
E(\c) = \badH &= \sum_{k,l\in\Z} \sum_{\gamma \in \Gamma}  \: \c_{\klg} 
\: \underbrace{M_{\lambda k / a}\: T_{a m_\gamma + \lambda l / b  } \: P \: T_{-\lambda l / b} \: M_{ b n_\gamma- \lambda k / a}}_{P_{k,l,m_\gamma,n_\gamma}} 
\end{split}\end{equation*}
where, as in \autoref{lem:elementarybound}, the operator $P_{\klmn}$ has the spreading function $M_{(\lambda  k / a, \lambda l / b ) } \: T_{(a m, b n)}\: \etarect(t,\nu)$. It is easy to verify that the operator $\badH$ has a spreading function
\[
 \steta_E =  \sum_{\kl\in\Z} \sum_{\gamma \in \Gamma}  \: \c_{\klg} \: M_{(\lambda  k / a, \lambda l / b ) } \: T_{(a m_\gamma, b n_\gamma)}\: \etarect(t,\nu).
\]
We abuse the notation slightly by denoting $\gamma = (m_\gamma, n_\gamma)$. 
The map $E\colon \Slinf(\Z^2\times \Gamma) \to \StOPW(U)$ is stable and bounded because 
\begin{align*}
\MoveEqLeft \norm{\badH}_{\StOPW(U)}^2 = \norm{\steta_E(t,\nu)}^2_{\SMinf(\R^2)} \\
&= \norm{R_{\steta_E}(\tntn)}_{\Minf(\R^4)} \\
 &= \Big\lVert \EXP \adjustlimits \sum_{\kl\in\Z} \sum_{\gamma \in \Gamma} \c_{\klg}(\w)  \: 
										M_{(\lambda  k / a, \lambda l / b) } \: T_{(a m_\gamma, bn_\gamma)}\etarect(t,\nu)\\
& \myindent \times \adjustlimits \sum_{\klp\in\Z} \sum_{\gamma' \in \Gamma} \conj{ \c_{\klgp} (\w)\: 
										M_{(\lambda  k' / a, \lambda l' / b) } \: T_{(a m_{\gamma'}, bn_{\gamma'})}\etarect(t',\nu')  }\Big\rVert_{\Minf(\R^4)} \\
&=\Big\lVert\adjustlimits \sum_{\klkl\in\Z} \sum_{\gamma, \gamma' \in \Gamma}  \E{ \c_{\klg} \, \c_{\klgp}^* }  \: M_{(\lambda  k / a, \lambda l / b) } \: T_{(a m_\gamma, bn_\gamma)}\,\etarect(t,\nu)\\
&\myindent \times\: \conj{M_{(\lambda  k' / a, \lambda l' / b) } \: T_{(a m_{\gamma'}, bn_{\gamma'})}\,\etarect(t',\nu')}   \Big\rVert_{\Minf(\R^4)} \\
&= \norm*{\E{ \c_{\klg} \, \c_{\klgp}^* }_{\klkl \in \Z, \gamma, \gamma' \in \Gamma}}_{\ell^\infty(\Z^4\times\Gamma^2)} \\  
&\asymp \norm{\braces*{\c_{\klg}}_{\kl\in\Z, \gamma \in \Gamma}}^2_{\Slinf(\Z^2\times\Gamma),}
\end{align*} 
where we used the fact that $\eta_{\klmn}\otimes \conj{\eta_{\klmnp}}$ is a Riesz basis for its span (by \autoref{lem:elementarybound}).

\item Now, given a stochastic operator $\badH \in \StOPW(U)$, we apply the evaluation functional $\Phi_f \colon \StOPW(U) \to \SMinf(\R)$ to obtain 
\[ 
\Phi_f \badH = \badH f.
\]  

\item By assumption, $\paren*{\lambda^2 \alpha} \paren*{\lambda^2 \beta} < 1$, therefore, the Gabor system 
\[
\Gscr =  \braces*{ g_{pq} = M_{\lambda^2 \alpha p} \: T_{\lambda^2 \beta q} \, g_0 }_{p,q\in\Z}
\]
and hence, the tensor product system 
\[
\GGscr =  \braces*{\gg_{\pqpq}=  \conj{g_{p,q}} \otimes g_{p'\!,q'}}_{\pqpq\in\Z}
\]
are frames by \autoref{thm:gaborframeexistence}. 
Thus, the analysis map (with respect to the frame $\Gscr$ given by
\begin{align*}
  C_{g_0}\colon \SMinf(\R)  \to  \Slinf(\Z^2),\quad  \boldphi \mapsto
         \braces*{ \ip{ \boldphi , g_{p,q} }    }_{p,q\in\Z}
\end{align*}
is bounded and stable, since for all $\d = C_{g_0} \boldphi$ we have 
\begin{multline*}
\norm{\d}_{\Slinf(\Z^2)}^2 = \norm{R_{\d}}_{\ell^\infty(\Z^4)} 
= \sup_{\pqpq\in\Z} \norm{\EXP\ip*{\boldphi, g _{p,q}}\conj{\ip*{\boldphi, g_{p'\!,q'}}}}_{\RV} \\
= \sup_{\pqpq\in\Z} \abs*{\ip*{R_{\boldphi}, \gg_{\pqpq}}}
\asymp \norm{R_{\boldphi}}_{\Minf(\R^2)} 
= \norm{\boldphi}^2_{\SMinf(\R).}
\end{multline*}

\item Let us denote $D= \E{\d \d^*}$ the covariance matrix of $\d = \paren*{C_{g_0} \circ \Phi_f \circ E \circ \iota} \, \a$, and compute 
\begin{equation*}\begin{split}
D_{\pqpq} 
&= \sum_{\substack{\klkl\in\Z \\ (m,n),(m'\!,n')\in\Gamma}} A_{\klmn; \klmnp} \: \underbrace{\ip{ P_{\klmn}\,  f, g_{p,q} }}_{\M_{p,q; \klmn}}  \\
& \qquad \qquad \times \:  \conj{\ip{P_{\klmnp} \, f, g_{p'\!,q'}}}  \: \Xpattern_{\mnmn} \\
&= \sum_{\substack{\klkl\in\Z \\ (\mnmn )\in\Lambda}} A_{\klmn; \klmnp} \: \M_{p,q; \klmn}  \:  \conj{\M_{p'\!,q'; \klmnp}},
\end{split}\end{equation*}
or, in a matrix form, 
\begin{equation}\label{eq:MM}
\Vec D = (\MM)\eval_{\Ltilde}\: \Vec A.
\end{equation}

\item Denote for brevity $\MMM =  (\MM)\eval_{\Ltilde} $ the above map that acts on bi-infinite matrices $\el[4]{\infty}$ (implicitly identified henceforth with infinite vectors $\ell^\infty(\Z^8)$). By assumption, $\MMM$ does not have a bounded left inverse, that is, for any $\eps>0$ there exists a vector $x \in \ell^\infty_0(\Z^8)$ such that $\norm{x}_{\ell^\infty(\Z^8)} = 1$, but $\norm{(\MM)\eval_{\Ltilde} x}_{\ell^\infty(\Z^4)} <\eps$. 
We can now trivially identify a compactly supported vector $x\in \ell^\infty(\Z^8)$ with  $X\in \ell^{\infty}( \Z^4 \times \Z^4)$, a bi-infinite matrix with finitely many nonzero entries such that 
\[ 
\norm{X}_{\el[4]{\infty}} = 1, \text{ and } \norm{\MMM X}_{\ell^\infty(\Z^4)} < \eps. 
\]

\item Due to the tensor product nature of the map $\MM$ and the \hyperref[eq:spd]{symmetry} of the set $\Ltilde$, 
\begin{align*}
\MoveEqLeft \paren*{\MMM X^*}_{\pqpq} \\ 
&= \sum_{\klkl\in\Z, (\mnmn)\in\Lambda} \M_{p,q; \klmn} \conj{\M_{p'\!,q'; \klmnp}} \: \conj{X_{\klmnp; \klmn}} \\
&= \sum_{\klkl\in\Z, (\mnmn)\in\Lambda}\conj{\M_{p'\!,q'; \klmnp}\conj{\M_{p,q; \klmn}}  \: X_{\klmnp; \klmn}} \\
&= \conj{\paren*{(\MM)\eval_{\Ltilde} \Vec (X)}_{p'\!,q'\!,p,q}} \\
&= \conj{(\MMM X)_{p'\!,q'\!,p,q}}. 
\end{align*}
Thus, the conjugate transpose matrix $X^*$ also satisfies $\norm{X^*}_\infty =1$ and $\norm{ \MMM X^*}_\infty<\eps.$
Furthermore, consider hermitian matrices $X_1 = \tfrac{X+X^*}{2}$ and $X_2 = \tfrac{X-X^*}{2i}$. Since $X=X_1+iX_2$, by reverse triangle inequality, $\max(\norm{X_1}_\infty,\norm{X_2}_\infty) \geq \frac12$. Let the maximum be obtained by $X_1$ without loss of generality. By triangle inequality, 
\[ 
\norm{ \MMM X_1}_\infty = \norm{\MMM \, \tfrac{X+X^*}{2}}_\infty \leq \norm{\MMM  X}_\infty \leq \eps. 
\]

Since the square root map $\Sq$, the rearrangement map $\iota$, the synthesis map $E$, the analysis map $C_{g_0}$, and the covariance map $\COV$ are all bounded and bounded below (see the diagram on \autoref{fig:diagram}), as well as the inequality $\norm{\H}_{\StOPW(U)} \lesssim \norm{\steta}_{\SMinf(\R^2)}$ proven in \autoref{thm:H_extension}, there exist constants $K, K'>0$ such that 
\begin{equation*}\begin{aligned}
&\norm{R_{\steta_{\badH}}}_{\Minf(\R^4)} \geq K \norm{A}_{\ell^\infty(\Z^2 \times \Z^2)}\\
&\norm{R_{\badH f}}_{\Minf(\R^2)}\leq K' \norm{D}_{\ell^\infty(\Z^2 \times \Z^2)}. 
\end{aligned}\end{equation*}
We can now select a pair  of \nnd matrices $A_1=(K+1)\Id$ and $A_2 = A_1 + X_1 / (K \norm{X_1}_\infty)$ such that  $\norm{A_1-A_2}_{\ell^\infty(\Z^2 \times \Z^2)} = 1 / K$ and $\norm{\MMM A_1 - \MMM A_2}_{\ell^\infty(\Z^2 \times \Z^2)} \leq \eps.$ 
The corresponding operators $\badH_1, \badH_2 \in \StOPW(U)$ then satisfy 
\[
\norm{R_{\steta_{\badH_1}} - R_{\steta_{\badH_2}}}_{\Minf(\R^4)} \geq  1,\text{ and } \norm{R_{\badH_1 f} - R_{\badH_2 f}}_{\Minf(\R^2)} \leq  \eps' = K\eps/2 \to 0.
\]
\end{enumerate}
 \end{proof}
 
Thus, the identifiability of $\StOPW(U)$ by a fixed signal $f$ is tied to the invertibility of a bi-infinite matrix $(\MM)\eval_{\Ltilde}$, where $\M$ depends on $f$. 
We identify several cases for the geometry of $U$ in which the bi-infinite matrix fails to be bounded below: the case of the excessive volume, $\vol(U)>1$, or the presence of two types of defective patterns. 
Defective patterns are explored in detail in \autoref{sec:defective.patterns}. 

These results follow as corollaries from a general and rather technical \autoref{cor:my.bi-infinite} that guarantees that if we can find a growing sequence of singular minors within $\MMM$ that are dominated by $\lambda$-slanted diagonals, and the matrix $\MMM$ has decay away from the slanted diagonal, then the matrix $\MMM$ does not have a bounded left inverse. 

\begin{definition}\label{defn:decay.away}\cite{ABK, Pfa05}
We say that the entries of a matrix $M\colon \ell^{p_1}(\Z^{d_1}) \to \ell^{p_2}(\Z^{d_2})$ decay away from the $\lambda$-slanted diagonal  if for some positive constants $r_1, r_2, r>0$, 
\begin{equation*}\label{eq:m.decay}
\abs{M_{z_2, z_1}} \leq w(\lambda \norm{z_2}_\infty - \norm{z_1}_\infty) \:  (1+\norm{z_1}_\infty)^{r_1} \: (1+\norm{z_2}_\infty)^{r_2}
\end{equation*}
for some decreasing function $w(x) = \littleo{x^{-r}}, x\in\R$.
\end{definition}
Consistent with the standard linear algebra literature, we call the submatrices 
\begin{align*} 
M_{[N, \lambda N]} \coloneqq \Big[ M_{z_2, z_1 } \colon  \norm*{z_1}_\infty \leq \lambda N, \norm*{z_2}_\infty \leq  N\Big]
\end{align*}
the \emph{$\lambda$-slanted principal minors} of dimensions $(\lambda N, N). $

\begin{figure}[ht]
\centering
\includegraphics{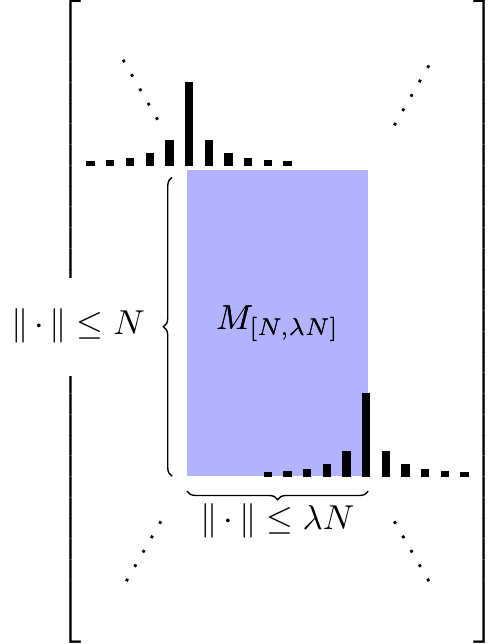}
\caption{A $\lambda$-slanted principal minor. } 
\end{figure}

\begin{theorem}\label{cor:my.bi-infinite}
Let $1\leq p_1, p_2 \leq \infty$, $\frac{1}{p_1} + \frac{1}{q_1} = 1$.  
Let $\MMM = [M_{z_2, z_1}]: \ell^{p_1}(\Z^{d_1}) \to \ell^{p_2}(\Z^{d_2})$ be a complex bi-infinite matrix that has the following properties. 
\begin{enumerate}[(i)]
\item \label{item:simple.ii} the entries of $M$ decay away from the $\lambda$-slanted diagonal, that is, 
\begin{equation*} 
\abs{\MMM_{z_2, z_1}} \leq w(\lambda \norm{z_2}_\infty - \norm{z_1}_\infty) 
\end{equation*}
for some rapidly decreasing function $w(x) = \littleo{x^{-r}}$ for any $r\in \R_+$, 
\item \label{item:simple.iii} and for any $N_0 \in \N$ there exists $N > N_0$ such that the slanted principal central minor
\[ 
\MMM_{[N, \lambda N]}= \brackets[\Big]{M_{z_2, z_1} \colon \size{z_1} \leq \lambda N,  \ \size{z_2} \leq N}
\]
is singular, 
\end{enumerate}
then $\MMM$ has no bounded left inverses. 
In fact, for any $\eps>0$, there exists a vector $x\in\ell^{p_1}_0(\Z^{d_1})$ such that $\norm{x}_{p_1} = 1$ and $\norm{\MMM x}_{p_2}<\eps$. 
\end{theorem}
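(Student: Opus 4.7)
The plan is to proceed by contraposition: assuming $M$ has a bounded left inverse, so that $\|Mx\|_{p_2} \geq A\,\|x\|_{p_1}$ for some $A>0$ and all $x \in \ell^{p_1}_0$, I will produce a normalized sequence $x_N \in \ell^{p_1}_0(\Z^{d_1})$ with $\|x_N\|_{p_1} = 1$ and $\|Mx_N\|_{p_2} \to 0$, contradicting stability. The sequence is built from kernel vectors of the singular slanted minors, extended by zero; the image is then localized to the tail $\{\|z_2\|_\infty > N\}$, where the $\lambda$-slanted decay takes over.

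Concretely, I first invoke hypothesis~\eqref{item:simple.iii} to pick arbitrarily large $N$ with $M_{[N,\lambda N]}$ singular, and choose a unit vector $x_N$ in its kernel, supported on $\{\|z_1\|_\infty \leq \lambda N\}$. By construction $(Mx_N)_{z_2}=0$ for all $\|z_2\|_\infty \leq N$, so the entire $\ell^{p_2}$-mass of $Mx_N$ lives in the tail. For $\|z_2\|_\infty = m > N$ and $\|z_1\|_\infty \leq \lambda N$ the slanted offset satisfies $\lambda m - \|z_1\|_\infty \geq \lambda(m-N)>0$, and monotonicity of $w$ together with hypothesis~\eqref{item:simple.ii} yield $|M_{z_2,z_1}| \leq w(\lambda(m-N))$ uniformly on the support of $x_N$. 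Applying H\"older's inequality row-wise, with the shell count $|\{\|z_1\|_\infty = j\}| \lesssim j^{d_1 - 1}$, bounds each tail entry by a product $C(\lambda N)^{d_1/q_1}\, w(\lambda(m-N))$; summing over shells $\{\|z_2\|_\infty = m\}$ with $m>N$ and substituting $k=m-N$ produces
\[
\|Mx_N\|_{p_2}^{p_2} \;\lesssim\; (\lambda N)^{d_1 p_2/q_1} \sum_{k \geq 1} (N+k)^{d_2 - 1}\, w(\lambda k)^{p_2}.
\]

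The main obstacle is in the last step: establishing that this bound, together with a judicious choice of $x_N$, actually tends to zero as $N \to \infty$ after the normalization $\|x_N\|_{p_1}=1$. The naive estimate carries a polynomial-in-$N$ prefactor that must be absorbed by the super-polynomial decay of $w$, i.e.\ by exploiting $w(x)=\littleo{x^{-r}}$ for \emph{every} $r>0$. To close the argument I would select $x_N$ inside the (high-dimensional) kernel of the minor so as to minimize its effective $\ell^1$-mass near the support boundary $\{\|z_1\|_\infty = \lambda N\}$, thereby tightening the H\"older bound, and then use the freedom in $r$ to beat the remaining polynomial growth. This strategy adapts the argument of \cite[Theorem~2.1]{Pfa05}, which the present theorem is designed to generalize, to the multi-dimensional $\lambda$-slanted setting; the identification of the correct kernel vector within the minor is the technical heart of the proof.
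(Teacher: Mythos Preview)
Your framework is exactly the paper's: take a unit kernel vector of the singular minor, zero-pad, and estimate the tail $\{\size{z_2}>N\}$ via H\"older. You have also correctly located the genuine difficulty: with the support of $x_N$ reaching all the way out to $\size{z_1}=\lambda N$, the slanted offset $\lambda m-\size{z_1}$ can be as small as $\lambda(m-N)$, so for $m=N+1$ the bound $w(\lambda)$ is merely a constant, and the sum $\sum_{k\ge 1}(N+k)^{d_2-1}w(\lambda k)^{p_2}$ carries an unavoidable $N^{d_2-1}$ contribution from the small-$k$ terms. Multiplied by $N^{d_1 p_2/q_1}$ this blows up, not down.

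Your proposed remedy---choosing $x_N$ within the kernel to minimise mass near the boundary $\{\size{z_1}=\lambda N\}$---is not how the paper proceeds, and it cannot work in general: condition~\eqref{item:simple.iii} only guarantees that the minor is singular, so its kernel may well be one-dimensional, leaving you no freedom whatsoever in the choice of $x_N$. There is no ``optimisation'' step available.

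The paper's device is different and structural: one works with a \emph{strictly smaller} slant $\pambda<\lambda$ for the singular minor (this is the content of \autoref{thm:my.bi-infinite}, to which the present statement defers, and is exactly what is verified in every application, e.g.\ in the proofs of \autoref{thm:necessity}, \autoref{thm:k2k4}, \autoref{thm:k33}). With $\size{z_1}\le \pambda N$ and $\size{z_2}>N$, the offset now satisfies
\[
\lambda\size{z_2}-\size{z_1}\;\ge\;\lambda N-\pambda N\;=\;(\lambda-\pambda)N,
\]
so even the \emph{smallest} argument fed to $w$ tends to infinity with $N$. The remaining work is to turn this into a clean estimate: the paper replaces the shell sums by integrals and invokes a short convexity lemma (\autoref{lem:convex}) to control $(\lambda(x-1)-N_1-1)^{-s}-(\lambda(x-1))^{-s}$; the polynomial prefactors are then beaten by choosing $r$ large enough. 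No special choice of kernel vector is needed.
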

\begin{proof}
The proof is similar to the proof of \cite[Theorem 2.1]{Pfa05}. We provide it in \autoref{sec:bi-infinite}. 
\end{proof}

We will show that a bi-infinite matrix $\MMM = (\MM)\eval_{\Ltilde}$ defined in \eqref{eq:MM} --- upon rearranging the indices to take advantage of the sparsity of the set $\Ltilde$ --- exhibits the above \hyperref[defn:decay.away]{decay phenomenon}. It is important to note that only in \autoref{thm:necessity}, we need to require $\lambda > 1$.

\begin{lemma}\label{lem:Mtilde.is.unstable}
Let the set $\Lambda$ be an admissible set in the sense of \autoref{eq:spd}. Further, assume that $\abs{\Lambda}=\Aleph^2$, and index the entries of $\Lambda$ by $j,j'=1,\dotsc, \Aleph$, that is, $(\mnmn)_{(j,j')} \in \Lambda$. Upon transformation of variables, namely, 
\[
\sigma=k, \  \tau = l \Aleph +j, \ \sigma' = -k', \  \tau' = l'\Aleph + j', 
\]
let the matrix $\Mtilde\colon \el[2]{\infty} \to \el[2]{\infty}$ be defined as 
\[
\Mtilde_{\pqpq; \sitau,\sitaup} = \M_{p,q,k,l,m_{(j,j')},n_{(j,j')}} \conj{\M_{p'\!,q'\!,k'\!,l'\!,m'_{(j,j')}, n'_{(j,j')}}} \raisebox{-4pt}{,}
\]
where $\M$ is given in \autoref{thm:Phist.is.MM}, with the constants $\alpha, \beta$ are fixed to be $\alpha = \frac{1}{a}$, and $\beta=\frac{1}{\Aleph b}$. 
The matrix $\Mtilde \cong (\MM)\eval_{\Ltilde}$ has  decay away from the $\lambda$-slanted diagonal property from \autoref{defn:decay.away} with $r_1=r_2=0$ and an arbitrary $r\in\N$, that is, 
\[
\abs{\Mtilde_{\pqpq; \sitausitau}} < w(\norm{\SITAU}_\infty - \lambda \norm{\PQ}_\infty),
\]
where $w(x) = \littleo{\abs{x}^{-r}}$ for all $r\in \R$.
\end{lemma}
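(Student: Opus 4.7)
The plan is to identify each entry of $\M$ with a short-time Fourier transform of $P_{k,l,m,n}f$ and then exploit the time-frequency localization of $P$ from \autoref{lem:elementarybound}(b) to extract decay away from the $\lambda$-slanted diagonal. From the definition in \autoref{thm:Phist.is.MM},
\[
\M_{p,q;k,l,m,n} = V_{g_0}\bigl(P_{k,l,m,n}f\bigr)(\lambda^2\beta q, \lambda^2\alpha p),
\]
where $P_{k,l,m,n} = M_{\lambda k/a}T_{am+\lambda l/b}PT_{-\lambda l/b}M_{bn-\lambda k/a}$ and $\alpha = 1/a$, $\beta = 1/(\Aleph b)$. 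First I would track the outer modulations and translations in $P_{k,l,m,n}$ through the localization bounds $\abs{Pg(x)} \leq \norm{g}_\Minf\, d_0(x)$ and $\abs{\widehat{Pg}(\xi)} \leq \norm{g}_\Minf\, d_0(\xi)$, together with the time-frequency invariance of $\norm{\cdot}_\Minf$. This will show that $P_{k,l,m,n}f$ is rapidly localized around time $am + \lambda l/b$ and that its Fourier transform is rapidly localized around frequency $\lambda k/a$.

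Next, I would convolve the time-side pointwise bound on $P_{k,l,m,n}f$ with the rapidly decaying $g_0$ in the integral definition of the STFT, yielding rapid decay of $\abs{\M_{p,q;k,l,m,n}}$ in the single variable $\lambda^2\beta q - am - \lambda l/b$. Applying an analogous argument to $\widehat{P_{k,l,m,n}f}$ through the covariance identity $\abs{V_{g_0}h(t,\nu)} = \abs{V_{\hat g_0}\hat h(\nu,-t)}$ and the rapid decay of $\hat{g_0}$, I would obtain decay in $\lambda^2\alpha p - \lambda k/a$. Substituting the explicit choices of $\alpha, \beta$ and using the change of variables $\sigma = k$, $\tau = l\Aleph + j$, these two expressions become affine in $\lambda p - \sigma$ and $\lambda q - \tau$, perturbed by the bounded quantities $am\Aleph b$ and $\lambda j$. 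Since $(m_{(j,j')},n_{(j,j')})$ ranges over the finite set $\Lambda$ and $j \in \{1,\dotsc,\Aleph\}$, these perturbations are uniformly bounded; absorbing the bounded shifts into a new rapidly decreasing envelope $\psi_0$ will yield, uniformly in $(j,j')$,
\[
\abs{\M_{p,q;k,l,m_{(j,j')},n_{(j,j')}}} \leq \psi_0(\abs{\lambda p - \sigma}) \quad\text{and}\quad \abs{\M_{p,q;k,l,m_{(j,j')},n_{(j,j')}}} \leq \psi_0(\abs{\lambda q - \tau}).
\]
Analogously, using $\sigma' = -k'$ and $\tau' = l'\Aleph + j'$, the conjugated factor is bounded by $\psi_0(\abs{\lambda p' + \sigma'})$ and by $\psi_0(\abs{\lambda q' - \tau'})$.

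To conclude, I would observe that when $\norm{\SITAU}_\infty > \lambda\norm{\PQ}_\infty$, at least one of the four coordinates of $\SITAU$ realizes the maximum; if, for instance, $\abs{\tau} = \norm{\SITAU}_\infty$, the reverse triangle inequality gives $\abs{\lambda q - \tau} \geq \abs{\tau} - \lambda\abs{q} \geq \norm{\SITAU}_\infty - \lambda\norm{\PQ}_\infty$, so the time-decay bound on the unprimed factor combined with the trivial bound $\psi_0(0)$ on the conjugated factor gives
\[
\abs{\Mtilde_{\pqpq;\sitausitau}} \leq \psi_0(0)\,\psi_0\bigl(\norm{\SITAU}_\infty - \lambda\norm{\PQ}_\infty\bigr);
\]
the other three cases (where $\abs{\sigma}$, $\abs{\sigma'}$, or $\abs{\tau'}$ realizes the max) would be handled symmetrically using the matching factor of $\psi_0$. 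Setting $w \coloneqq \psi_0(0)\,\psi_0$, which decays faster than any polynomial since $\psi_0$ does, then yields the stated bound for arbitrary $r$. The main obstacle will be the bookkeeping in the previous step: ensuring that the bounded shifts by $am\Aleph b$ and $\lambda j$ and their primed analogues are absorbed uniformly in $(j,j')$ into a single rapidly decreasing envelope, so that the slanted-diagonal decay rate is genuinely independent of the finite index set $\Lambda$ and of the parameters $a, b, \Aleph$.
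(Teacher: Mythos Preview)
Your plan is correct and follows essentially the same route as the paper: both arguments read $\M_{p,q;k,l,m,n}$ as an STFT of $P_{k,l,m,n}f$, invoke the pointwise time- and frequency-localization bounds on $P$ from \autoref{lem:elementarybound}(b) to obtain separate rapid decay in $\lambda q-\tau$ and $\lambda p-\sigma$ (after the substitutions $\alpha a=1$, $\beta b\Aleph=1$), absorb the uniformly bounded shifts coming from the finite index set $\Lambda$ and from $j\in\{1,\dots,\Aleph\}$ into a single rapidly decreasing envelope, and then combine the four one-variable bounds into the $\ell^\infty$ slanted-diagonal estimate. The paper packages the primed and unprimed factors together via two-dimensional indices and finishes by setting $w=\max\{\norm{w_1}_\infty w_1,\norm{w_2}_\infty w_2\}$, which is exactly the implicit content of your final case analysis.
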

\begin{proof} 
Here it is crucial that the amount of active boxes in a pattern is a perfect square $\Aleph^2$. This can be easily achieved by refining the rectification and by allowing to increase $\Lambda$. 
The following analysis is the same as in \cite{KozPfa}, only with 2D indices. 

\begin{equation}\label{eq:decay}\begin{split}
\abs*{{\Mtilde}_{\pqpq}^{\sitau,\sitaup}} 
&= \abs{\M_{p,q; \klmnjj}\:\conj{\M_{p'\!,q'; \klmnjjp}}} \\ 
&= \abs*{\ip*{ \paren{P_{\klmnjj} \otimes \conj{P_{\klmnjjp}}} (f\otimes \conj{f}) , \conj{g_{p,q}}\otimes g_{p'\!,q'} }} \\
&= \abs*{\ip*{ M_{\lambda \kk / a} \: T_{a \mm + \lambda \ll / b } \: \PP \: T_{-\lambda \ll / b} \: M_{b \nn - \lambda \kk / a}\: \ff, \gg_{\pp,\qq} }},
\end{split}\end{equation}
where we use upright letters for two-dimensional indices and simple tensor products, that is, $\PP = P\otimes \conj{P}, \ff = f\otimes \conj{f}, \kk = (k,-k')$, $\ll = (l,l'), \mm = (m_\jjp, m'_\jjp), \nn = (n_\jjp,-n'_\jjp), \pp = (p,p')$, and $\qq = (q,q')$. Also, $\jj = (j,j'), \ss = (\sigma, \sigma'), \tt = (\tau,\tau')$.

\newcommand*{\phirm}{\upphi}
By \autoref{lem:elementarybound}, the properties of $\PP$ are such that for any $\phirm(x)\in\Minf(\R^2)$ and any $r\in \N^2$ we have the decay
\begin{equation*} 
 \abs{\PP \phirm(x)} \leq \norm{\phirm}_{\Minf(\R^2)}  d_{0}(x) = \littleo{\abs{x}^{-r}},
\end{equation*}
hence, for some $d_1(x) = \littleo{\abs{x}^{-r}}$, 
\[
T_{a \mm - \lambda \jj / b} \: \abs{\PP \, \phirm(x)} \leq \norm{\phirm}_{\Minf(\R^2)} \: \sup_{\jj=(1,1), \dotsc, (J,J)}  T_{a\mm - \lambda \jj / b} \:d_0(x) = \norm{\phirm}_{\Minf(\R^2)} \: d_1(x).
\]
 In particular, we continue \eqref{eq:decay} with  $\phirm(x)= T_{-\lambda \ll / b } \: M_{b \nn  - \lambda \kk / a} \: \ff$ and $\tt=\ll \Aleph+ \jj$, noting that $\norm{\phirm}_{\Minf(\R^2)} = \norm{\ff}_{\Minf(\R^2)}$, 
\newcommand*{\dd}[1]{\paren*{d_{#1} \otimes \conj{d_{#1} }}}
\begin{align*}
\abs*{\Mtilde_{\pqpq}^{\sitau,\sitaup}}& =  \abs*{\ip*{ M_{\lambda \kk / a} \: T_{a \mm + \lambda \ll / b } \: \PP \: T_{-\lambda \ll / b} \: M_{b \nn - \lambda \kk / a}\: \ff, M_{\lambda^2 \alpha \pp} \: T_{\lambda^2 \beta \qq} \: \gg_{0,0}} }\\
&\leq \ip*{  T_{  \lambda  \ll / b } T_{a \mm}  \abs*{\PP  \phirm  },  T_{\lambda^2\beta \qq} \: \gg_{0,0} }  \\
& = \ip*{  T_{  \lambda   (\ll \Aleph + \jj) / (b\Aleph) } T_{a \mm - \lambda \jj / b }  \abs*{\PP  \phirm  },  T_{\lambda^2\beta \qq} \: \gg_{0,0} }  \\
& \lesssim  \norm{\ff}_{\Minf(\R^2)}  \ip*{  T_{  \lambda  \tt / (b\Aleph) } \dd{1},  T_{\lambda^2\beta \qq} \: \gg_{0,0} }  \\
& = \norm{\ff}_{\Minf(\R^2)} \ip*{  T_{  \frac{\lambda}{b\Aleph}  \paren*{\tt  - \lambda \beta b \Aleph \qq} } \dd{1},  \: \gg_{0,0} }  \\
&\leq \norm{\ff}_{\Minf(\R^2)} w_1 \paren*{\tau - \lambda \beta  b\Aleph q} \: w_1 \paren*{\tau' - \lambda \beta  b\Aleph q'}, 
\end{align*}
where $w_1(x) = \ip*{T_{\frac{\lambda}{b \Aleph} x} d_1, \: g_0} = \littleo{\abs{x}^{-r}}$. 

Similarly, by taking the Fourier transform on both sides of the inner product,
\begin{align*}
 \Mtilde_{\pqpq}^{\sitau,\sitaup} & = \abs*{\ip*{ M_{\lambda \kk / a} \: T_{a \mm + \lambda \ll / b } \: \PP \: T_{-\lambda \ll / b} \: M_{b \nn - \lambda \kk / a}\: \ff, 
					M_{\lambda^2 \alpha \pp} \: T_{\lambda^2 \beta \qq} \, \gg_{0,0}} }\\
 &= \abs*{\ip*{T_{\lambda \kk / a} \: M_{-a \mm - \lambda \ll / b } \: \FT \: \PP \: T_{-\lambda \ll / b} \: M_{b \nn - \lambda \kk / a}\, \ff, T_{\lambda^2 \alpha \pp} \: \abs*{ M_{\lambda^2 \beta \qq} \: \widehat{\gg_{0,0}}}}} \\
 &\leq \ip*{T_{\frac{\lambda}{a}(\ss - \lambda \alpha \pp)} \abs{\FT \: \PP \, \phirm}, \widehat{\gg_{0,0}}} \\
 &\lesssim \norm{\ff}_{\Minf(\R^2)} \ip*{T_{\frac{\lambda}{a}(\ss - \lambda \alpha a \pp)} \dd{0}, \: \widehat{\gg_{0,0}}}\\
 & \leq \norm{\ff}_{\Minf(\R^2)} w_2\paren*{\sigma - \lambda \alpha a p} \:  w_2\paren*{\sigma' - \lambda \alpha a p'}.
\end{align*}
where $w_2(x) = \ip*{T_{\frac{\lambda}{a} y} d_0, \, \widehat{g_0}}$, and we justify our choice of $\alpha$ and $\beta$ by observing $b \beta \Aleph = 1$ and $a \alpha = 1$.
The proof is complete by setting 
\[
w(x) = \max \braces*{ \norm{w_1}_{\infty} w_1(x),  \: \norm{w_2}_{\infty} w_2(x)} = \littleo{x^{-r}} \quad \text{ for all } r\in\N. \qedhere
\]
\end{proof}

 \renewcommand{\d}{\:\mathrm{d}}

We are now ready to complete this section by proving the main theorem. 
\begin{theorem}\label{thm:necessity}
If an admissible bounded measurable set $M\subseteq \R^4$ has $\vol^{-}(M)>1$, the class $\StOPW(M)$ is not identifiable by any $f\in\Minf(\R)$.
\end{theorem}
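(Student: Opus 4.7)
The plan is to deduce \autoref{thm:necessity} from \autoref{thm:Phist.is.MM} in combination with \autoref{cor:my.bi-infinite}, using a dimension count to verify that the relevant bi-infinite matrix has singular slanted principal minors of arbitrarily large size. The input to this chain is already assembled by \autoref{lem:Mtilde.is.unstable}; the only thing that needs to be produced is the singular-minor hypothesis of \autoref{cor:my.bi-infinite}, and this is exactly where the assumption $\vol^{-}(M)>1$ is consumed.

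First, I would pass to a subset $U\subseteq M$ with a precise rectification. Since $\vol^{-}(M)>1$, the definition of inner Jordan content yields a finite union $U$ of disjoint 4D rectangles inside $M$ with $\vol(U)>1$, and by a lattice refinement (keeping $L=1/(ab)$ prime) I would arrange $U$ to be precisely symmetrically $(a,b,\Lambda)$-rectified with $|\Lambda|=J^{2}$ a perfect square. Admissibility of $\Lambda$ in the sense of \eqref{eq:spd} is inherited from that of the rectification of $M$, while a slight enlargement of $\Lambda$ to reach a perfect square only weakens the support constraint on $\Reta$ and so can only make non-identifiability easier. Because $\vol(U)=J^{2}/L^{2}>1$ is equivalent to $J>L$, this is precisely the regime in which the parameter $\lambda$ of \autoref{thm:Phist.is.MM} admits a choice strictly greater than one. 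Since $\StOPW(U)\subseteq \StOPW(M)$, it is enough to show that $\StOPW(U)$ is not identified by any $f\in\Minf(\R)$.

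Next, fix any $f\in\Minf(\R)$ and form the bi-infinite matrix $\MMM=(\MM)\eval_{\Ltilde}$ of \autoref{thm:Phist.is.MM}; choose $\lambda,\alpha,\beta$ with $1<\lambda^{4}<\min(J/L,\,1/(\alpha\beta))$, and pass to the reindexed matrix $\Mtilde$ of \autoref{lem:Mtilde.is.unstable}, whose rows and columns are indexed by $(p,q,p',q')\in\Z^{4}$ and $(\sigma,\tau,\sigma',\tau')\in\Z^{4}$, respectively, and whose entries decay rapidly away from the $\lambda$-slanted diagonal. Hypothesis (i) of \autoref{cor:my.bi-infinite} is thus satisfied. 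For hypothesis (ii), observe that the $\lambda$-slanted principal minor $\Mtilde_{[N,\lambda N]}$ has approximately $(2N+1)^{4}$ rows and $(2\lambda N+1)^{4}$ columns; since $\lambda>1$, for all sufficiently large $N$ the column count strictly exceeds the row count, so the minor has a non-trivial right kernel and is singular. Invoking \autoref{cor:my.bi-infinite}, the matrix $\MMM$ admits no bounded left inverse, and \autoref{thm:Phist.is.MM} concludes that $\Phist$ is not bounded below. Since $f$ was arbitrary, $\StOPW(U)$ — and therefore $\StOPW(M)$ — fails to be identifiable.

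The principal technical obstacle is the rectification step: coordinating the requirements that $L$ be prime, $|\Lambda|$ be a perfect square, $J>L$, and the rectification be precise, all while keeping $U\subseteq M$ and $\vol(U)>1$. This is essentially number-theoretic bookkeeping with Jordan covers rather than hard analysis, but the constraints interact and care is needed to ensure they survive a simultaneous refinement of the $(a,b)$-lattice.
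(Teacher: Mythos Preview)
Your proposal is correct and follows essentially the same route as the paper: reduce to a precisely rectified subset $U$ with $|\Lambda|=J^{2}>L^{2}$, choose $\lambda>1$ with $\lambda^{4}<J/L$, invoke \autoref{lem:Mtilde.is.unstable} for the decay hypothesis, and then use a row/column dimension count to see the slanted principal minors are singular, feeding \autoref{cor:my.bi-infinite} into \autoref{thm:Phist.is.MM}. The only cosmetic difference is that the paper picks a separate $1<\pambda<\lambda$ for the minors (matching the two-parameter form in \autoref{thm:my.bi-infinite}), whereas you use the single $\lambda$ exactly as in the simplified statement of \autoref{cor:my.bi-infinite}; since your singularity argument works for any slant strictly bigger than one, this is not a gap.
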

\begin{proof}
By the theory of Jordan domains, for any such $M$ we can find a subset $U$ of $M$ with a precise symmetric $(a,b,\Lambda)$-rectification $U$ such that $\vol (U) >1$, and at the expense of further refinement of the lattice, we can guarantee a parallelepiped count to be a perfect square, $\abs{\Lambda} = \Aleph^2$. Consider that the volume of $U$ is precisely $(ab)^2 (\Aleph^2) = (\Aleph / L)^2 > 1$. Therefore, we can choose $1 < \lambda < \sqrt[4]{\frac{\Aleph}{L}}$ such that selecting the constants $\alpha= \frac{1}{a}, \beta = \frac{1}{b \Aleph}$ satisfies the requirement of \autoref{thm:Phist.is.MM} necessary for the system $\Gscr$ to be a frame. 
\[ 
\lambda^4 \alpha \beta = \lambda^4 \frac{1}{a b J} = \lambda^4 \frac{L}{J} < 1.
\]
Thus, the conditions of \autoref{lem:Mtilde.is.unstable} are fulfilled, and the there defined matrix $\Mtilde$ decays away from the $\lambda$-slanted diagonal. 

It remains to observe that whenever $\lambda > 1$, we can always find a $1< \pambda< \lambda$ such that there exists a growing sequence of $\pambda$-slanted principal minors $\Mtilde_{[N,\pambda N]} \colon \C^{16\floor{\pambda N}^4} \to \C^{16N^4}$ of the matrix $\Mtilde$ that are singular, since for $N$ sufficiently large, $\floor{\pambda N} > N$. 
Therefore, the \autoref{cor:my.bi-infinite} holds, the matrix $\Mtilde$ is not stable, and neither is the evaluation operator $\Phist$. This proves that the set $\StOPW(U)$, and hence, its superset $\StOPW(M)$ is not identifiable by any $f$.
\end{proof}

\renewcommand{\d}{\boldsymbol{d}}
\renewcommand*{\c}{\boldsymbol{c}}   
\newcommand{\Gtilde}[1]{I_{#1}} 

\section{Defective sets}\label{sec:defective.patterns}
In \cite{PfaZh02}, we have described the geometrical conditions on the rectification pattern $\Lambda$ of $M$ that prevented the identifiability by delta train, based on the condition in \autoref{cor:Reta_less_RShah} that the matrix $\GG\eval_{\Lambda}$ must be invertible. 
It turns out that the state of affairs is not the deficiency of delta train identifiers $\Shah_c$, but rather a consequence of the tensor product structure of the autocorrelation support. 
In this section we show that the sets that we determined to be unidentifiable by weighted delta trains, remain so even if we allow $f$ to be an arbitrary distributional sounding signal. We suggest to call such patterns \emph{globally defective}. 

In \cite[Definition 10]{PfaZh02}, we defined a pattern to be defective it a weighted delta train could not identify the corresponding operator classes. We discovered two families of patterns that were defective in that sense. We extend the definition of the second family here and show that these patterns are indeed globally defective. 
\begin{definition}\label{defn:defective.families}
Let $\Gamma \subset \Z^2$ be a set such that $\abs{\Gamma}  = L^2$. 
Let $\Lambda \subset \Gamma \times \Gamma$ be an \hyperref[eq:spd]{admissible set} in $\Z^4$. 
Let the graph $G$ on the vertices indexed by $\Gamma$ be such that its adjacency matrix is the indicator matrix $\Id_{\Lambda}$.
If for some sets $\Gamma_1, \Gamma_2 \subset \Gamma$ such that $\Gamma_1 \cap \Gamma_2 = \varnothing$, and $\abs{\Gamma_1} + \abs{\Gamma_2} > L$, 

\begin{enumerate}[(i)]
\item the graph $G$ contains two disjoint complete subgraphs $K_{\Gamma_1}$ and $K_{\Gamma_2}$ (that is $K_{\Gamma_i}, i=1,2$ is a complete graph on the vertices in $\Gamma_i$), then we say $\Lambda$ contains a \emph{two squares} pattern on $\Gamma_1, \Gamma_2$. 

\item the graph $G$ contains a complete bipartite subgraph $K_{\Gamma_1, \Gamma_2}$, then we say  that $\Lambda$ contains a \emph{butterfly} pattern on $\Gamma_1, \Gamma_2$. 
\end{enumerate}
We give examples of each type of pattern in \autoref{fig:defective.patterns}.
\end{definition}

\begin{figure}[ht]
\centering
\def\myscale{0.45}
\subfloat[$K_2 \sqcup K_4$, \enquote{two squares}] 
{ \label{fig:defectiveK2K4} 
\includegraphics{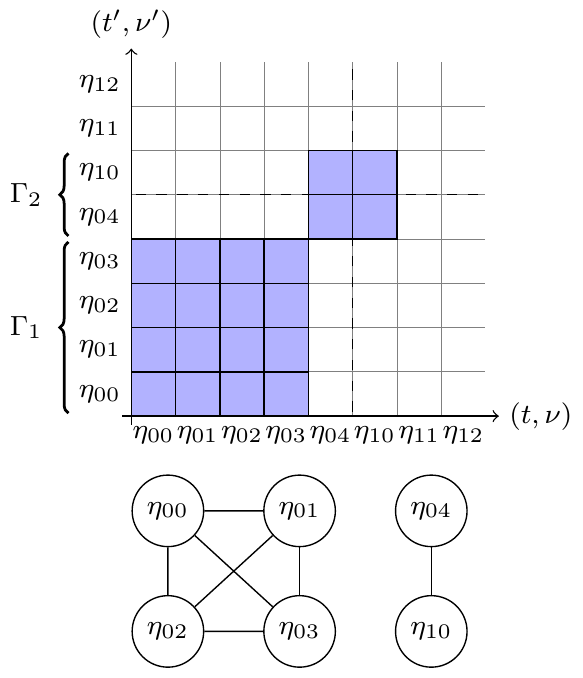}
}
\subfloat[$K_{3,3}$, \enquote{butterfly}] 
{ \label{fig:defectiveK33} 
\includegraphics{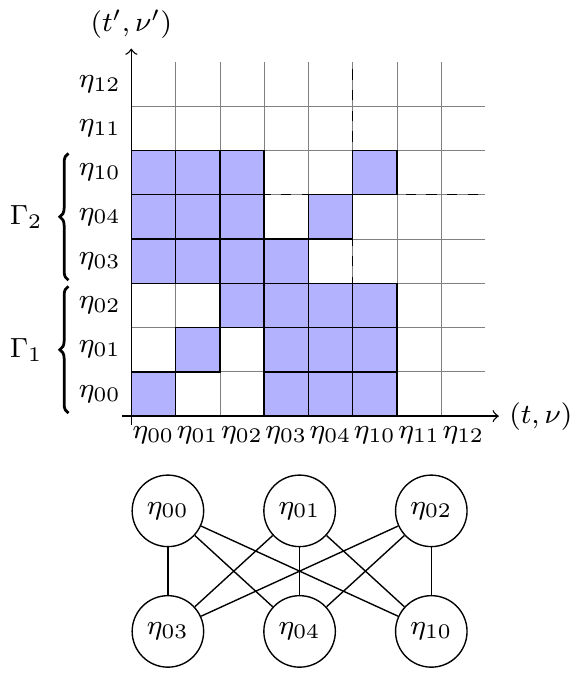}
}
\caption{Defective patterns, $L=5$. \label{fig:defective.patterns}}
\end{figure}

\begin{theorem}\label{thm:k2k4}
Let $M \subset \R^4$  have a precise symmetric $(a,b,\Lambda)$-rectification such that $\Lambda$ contains a two squares pattern on $\Gamma_1, \Gamma_2$ as defined in \autoref{defn:defective.families}, that is, $\abs{\Gamma_1} + \abs{\Gamma_2} \geq L+1, \Gamma_1 \cap \Gamma_2 = \varnothing$, and $(\Gamma_1 \times \Gamma_2) \cup (\Gamma_2 \times \Gamma_1) \subset \Lambda$. 
Assume also that $\abs{\Lambda} = \Aleph^2$ for some $\Aleph \in \N$.

The class $\StOPW(M)$ is not identifiable by any $f\in \Minf(\R)$, that is, for any $\eps>0$, there exists an operator $\H\in \StOPW(M)$ such that $\norm{\H f}_{\SMinf(\R)} <\eps \norm{\H}_{\StOPW(M).}$
\end{theorem}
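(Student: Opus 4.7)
The plan is to apply \autoref{thm:Phist.is.MM}: for a fixed $f \in \Minf(\R)$, failure of $\Phist$ to be bounded below is equivalent to the bi-infinite matrix $(\MM)\eval_{\Ltilde}$ failing to be bounded below on $\ell^\infty$, and I must establish the latter for \emph{every} $f$. By \autoref{lem:Mtilde.is.unstable} (applied with $\alpha = 1/a$, $\beta = 1/(bJ)$, and any $\lambda > 0$ satisfying $\lambda^4 < J/L$), the rearrangement $\Mtilde$ decays rapidly away from the $\lambda$-slanted diagonal, so by \autoref{cor:my.bi-infinite} it is enough to exhibit, for an unbounded sequence of $N$, $\lambda$-slanted principal minors $\Mtilde_{[N, \lambda N]}$ that are singular.

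The singularity is to be extracted from a finite-dimensional combinatorial obstruction associated to the two-squares structure $(\Gamma_1 \times \Gamma_1) \cup (\Gamma_2 \times \Gamma_2) \subseteq \Lambda$, an obstruction that is \emph{independent of} $f$. Since $\abs{\Gamma_1} + \abs{\Gamma_2} \geq L + 1$, for any $c \in \C^L$ the columns of the finite Gabor matrix $G = [M^l T^k c]_{k,l=0}^{L-1}$ indexed by $\Gamma_1 \cup \Gamma_2$ are linearly dependent in $\C^L$, so there exist $v^{(i)}$ supported on $\Gamma_i$ with $G v^{(1)} = -G v^{(2)}$. The rank-two hermitian $N_0 = v^{(1)} (v^{(1)})^* - v^{(2)} (v^{(2)})^*$ is then supported on $(\Gamma_1 \times \Gamma_1) \cup (\Gamma_2 \times \Gamma_2)$ and satisfies $G N_0 G^* = 0$ by \autoref{lem:equivalence}. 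This $N_0$ is precisely the algebraic obstruction that defeats delta-train identification in the two-squares case of \cite{PfaZh02}.

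To lift $N_0$ to a near-kernel element of the slanted minor, I would take a compactly supported discrete profile $\psi_N : \Z^4 \to \C$ with $\norm{\psi_N}_\infty = 1$, identically $1$ on a cube of side $(\lambda - \delta) N$ and supported in the cube of side $\lambda N$, and form
\[
x_N(\klkl, \mnmn) = \psi_N(\klkl)\,(N_0)_{(m,n),(m',n')}, \qquad (\mnmn) \in \Lambda,
\]
supported on $\Ltilde$ and embedded in the column space of $\Mtilde_{[N, \lambda N]}$. The time-frequency-shift structure of the operators $P_{\klmn}$ combined with the identity $G N_0 G^* = 0$ should produce cancellation of the dominant contributions to $(\Mtilde\,x_N)_{\pqpq}$ at interior indices $(\pqpq)$, while the boundary and tail contributions are controlled by the rapid off-diagonal decay of $\Mtilde$ from \autoref{lem:Mtilde.is.unstable}. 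As $N \to \infty$ one expects $\norm{\Mtilde_{[N, \lambda N]}\,x_N}_\infty / \norm{x_N}_\infty \to 0$, yielding singular minors $\Mtilde_{[N, \lambda N]}$ for arbitrarily large $N$ after a small perturbation if exact singularity is required.

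The hard part will be the precise translation of the finite-dimensional identity $G N_0 G^* = 0$ into a quantitative near-cancellation for the $f$-dependent matrix $\Mtilde$. For the delta train, the identity is exact because the Zak-transform relation (equation~(5) of \cite{PfaZh02}) makes $G$ literally act on the spreading-function coefficients; for general $f$ the effective analogue uses the operators $P_{\klmn}$ applied to $f$ and analyzed against the Gabor frame $\Gscr$, which does not provide exact algebraic relations between columns. The needed cancellation must therefore be engineered through careful bookkeeping of error terms using the off-diagonal decay of $\Mtilde$, the time-frequency localization of $P$ and $g_0$ from \autoref{lem:elementarybound}, and a bound uniform in $f \in \Minf(\R)$.
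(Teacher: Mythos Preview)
Your framework is correct through the invocation of \autoref{thm:Phist.is.MM}, \autoref{lem:Mtilde.is.unstable}, and \autoref{cor:my.bi-infinite}: what remains is to produce singular $\pambda$-slanted minors of $\Mtilde$ for some $\pambda<\lambda$ and arbitrarily large $N$. Where your proposal goes wrong is the mechanism you choose to generate that singularity.

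The obstruction $N_0 = v^{(1)}(v^{(1)})^* - v^{(2)}(v^{(2)})^*$ with $G N_0 G^* = 0$ is tied to the \emph{finite Gabor matrix} $G = [M^l T^k c]$, which enters the identification problem only when $f = \Shah_c$. For a general $f\in\Minf(\R)$ the columns of $\Mtilde$ indexed by $(\klmn)$ with $(m,n)$ ranging over $\Gamma$ are the vectors $\ip{P_{\klmn} f, g_{p,q}}$, and the operators $P_{\klmn}$ for fixed $(k,l)$ and varying $(m,n)\in\Gamma$ are genuinely distinct time-frequency shifts of $P$. There is no algebraic identity among them inherited from $G$; the relation $G N_0 G^* = 0$ simply does not act on these columns. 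Consequently your lifted vector $x_N = \psi_N \otimes N_0$ produces no cancellation whatsoever at interior indices $(\pqpq)$---the dominant contributions do not vanish, and off-diagonal decay controls only tails, not the principal part. The ``careful bookkeeping'' you anticipate cannot close this gap because the gap is algebraic, not analytic.

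The paper's route is entirely different and avoids any reference to the finite Gabor matrix $G$. One works directly with the $f$-dependent matrices
\[
\M_i = \brackets[\big]{\M_{p,q;\klmn} : \abs{p},\abs{q}\le N,\ \abs{k}\le\floor{\pambda N},\ \abs{l}\le N',\ (m,n)\in\Gamma_i}, \quad i=1,2,
\]
and counts dimensions: the target space has dimension $(2N+1)^2$ while the combined column count is $(2\floor{\pambda N}+1)(2N'+1)(\abs{\Gamma_1}+\abs{\Gamma_2})$. The hypothesis $\abs{\Gamma_1}+\abs{\Gamma_2}\ge L+1$ is exactly what forces, for suitable $\pambda$ and large $N$, the latter to exceed the former, so there exist nonzero $b_i\in\Dom(\M_i)$ with $\M_1 b_1 + \M_2 b_2 = 0$. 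Then $b = b_1\otimes\conj{b_1} - b_2\otimes\conj{b_2}$ is a nontrivial vector, supported on $(\Gamma_1\times\Gamma_1)\cup(\Gamma_2\times\Gamma_2)\subset\Lambda$, lying in the kernel of $\Em{\Lambda}$ and hence of $\Mtilde_{[N,\pambda N]}$. This gives \emph{exact} singularity (no perturbation needed), and the linear dependence $b_1,b_2$ may depend on $f$ and $N$---only its existence matters. The combinatorics of $\Gamma_1,\Gamma_2$ enter through the cardinality inequality $\abs{\Gamma_1}+\abs{\Gamma_2}>L$, not through any Gabor-matrix identity.
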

\begin{proof}
The case $\Aleph > L$ is covered by the \autoref{thm:necessity}, so  we can assume that $\lambda^4 < \Aleph/L \leq 1$, $\lambda$ to be chosen later. The proof will follow the same lines as the proof of \autoref{thm:necessity}.

The system $\Gscr = \braces*{ g_{p,q} = M_{\lambda^2 \alpha p} T_{\lambda^2 \beta q} g_0}$ is a frame, if we choose $\alpha= \frac{1}{a}, \beta = \frac{1}{b \Aleph}$ so that 
\[ 
\lambda^4 \alpha \beta = \lambda^4 \frac{1}{a b J} = \lambda^4 \frac{L}{J} < 1.
\]
By \autoref{lem:Mtilde.is.unstable}, the there defined matrix $\Mtilde$ has decay away from the $\lambda$-slanted diagonal.
In order to apply \autoref{cor:my.bi-infinite} and  \autoref{thm:Phist.is.MM}, we need to show that for some $\pambda < \lambda$ there exists a growing sequence of singular $\pambda$-slanted principal minors $\Mtilde_{[N, \pambda N]}$ of the matrix $\Mtilde$. 
However, here, due to $\lambda<1$, these submatrices are tall and skinny rather than short and fat, as in the case of \hyperref[thm:necessity]{excessive volume}, so a closer look is necessary. 

For a fixed $N$ and $\pambda<\lambda$ to be chosen later, let $M_{[N, \pambda N]}$ be a $\pambda$-slanted principal minor
\begin{multline*}
\Mtilde_{[N, \pambda N]} = \Big[\Mtilde_{\pqpq; \sitausitau} \colon \\
\norm*{\PQ}_\infty \leq N, \quad \norm*{\SITAU}_\infty \leq \floor{\pambda N}\Big]. 
\end{multline*}
Consider the variable transformation 
\[
\sigma=k, \  \tau = l \Aleph +j, \ \sigma' = -k', \  \tau' = l'\Aleph + j', \ (\mnmn) = (m_j, n_j, m'_{j'}, n'_{j'}) \in \Lambda. 
\]
We are guaranteed to have  $\norm*{\SITAU}_\infty \leq \floor{\pambda N}$ if  
\[ 
\abs{k},\abs{k'} \leq \floor{\pambda N}, \quad \abs{l},\abs{l'} \leq \floor*{\frac{\floor{\pambda N} - \Aleph}{\Aleph}} \coloneqq N'.
\]

\begin{figure}[ht]
\centering
\includegraphics{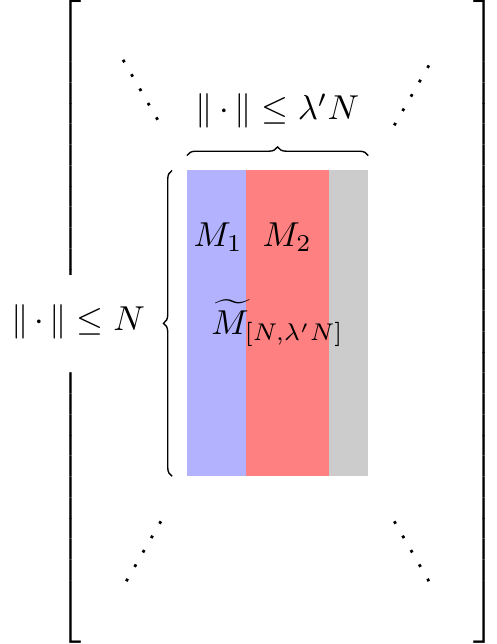}
\caption{The $\pambda$-slanted principal minor $M_{[N, \pambda N]}$ and its column submatrices $\Em{1}$, $\Em{2}$. }
\end{figure}
Let us denote $\Em{1}, \Em{2}, \Em{\Lambda}$ and $\Em{\text{full}}$ the submatrices comprising columns of $\Mtilde_{[N, \pambda N]}$ such that 
$\abs{k},\abs{k'} \leq \floor{\pambda N}, \abs{l},\abs{l'} \leq  N'$ and $(\mnmn) \in \Gamma_1 \times \Gamma_1$ (respectively, $\Gamma_2 \times \Gamma_2$, $\Lambda$, and $(\Gamma_1 \cup \Gamma_2) \times (\Gamma_1 \cup \Gamma_2)$).  
We will also denote the corresponding index sets $\Gtilde{1}, \Gtilde{2}, \Gtilde{\Lambda}$ and $\Gtilde{\text{full}},$ for example, 
\begin{equation}\label{eq:Em.full}
\Gtilde{\Lambda} =\begin{multlined}[t] \Big\{ (\sitausitau)\in \Z^4  \colon \\
\abs{k},\abs{k'} \leq \floor{\pambda N},  \abs{l},\abs{l'} \leq  N', (\mnmn) \in \Lambda\Big\}.
\end{multlined}
\end{equation}
We will show that the submatrix $\Em{\Lambda}$ does not have full rank.

Since $(\Gamma_1 \times \Gamma_1) \cap (\Gamma_2 \times \Gamma_2)  = \varnothing$, the submatrices $\Em{1}$ and $\Em{2}$ do not overlap.
We will show, however, that their respective ranges intersect. Observe that $\Em{i} =  \M_i \otimes \conj{\M_i}$, $i=1,2$, where 
\begin{multline}\label{eq:Mu}
\M_i = \brackets[\Big]{\M_{p,q; \klmn} \colon \abs{p}, \abs{q} \leq N,  \abs{k}\leq \floor{\pambda N},  \abs{l} \leq N', (m,n) \in \Gamma_i}.
\end{multline}
The column spans of $\M_1$ and $\M_2$ in a $(2 N +1)^2$-dimensional ambient space have dimensions $(2\floor{\pambda N}+1)(2N'+1)\abs{\Gamma_i}$, $i=1,2$. For $N$ sufficiently large, they must have a nonempty intersection, because 
\begin{multline*}
(2\floor{\pambda N}+1)(2N'+1)(\abs{\Gamma_1} + \abs{\Gamma_2}) 
\geq (2\pambda N - 1) \paren*{\frac{2(\pambda N-1)}{J} - 1} (L+1)  \\
= 4(\pambda)^2  \frac{L+1}{J} N^2 + \bigO{N} 
>  (2N+1)^2
\end{multline*}
whenever $(\pambda)^2 > \frac{\Aleph}{L+1}$. We can always find two distinct real numbers $\lambda$ and $\pambda$ such that 
\begin{equation}\label{ineq:lambda.pambda}
\frac{\Aleph}{L+1} < (\pambda)^2 < \lambda^2 < \sqrt{\frac{\Aleph}{L}},
\end{equation}
for $\Aleph<L$, for example, $\pambda = \sqrt{\frac{\Aleph}{L+\frac12}}, \lambda = \sqrt{\frac{\Aleph}{L}}$. 
Therefore, there exist nonzero vectors 
$b_1 \in \Dom(\M_1)$ and $b_2 \in \Dom(\M_2)$  such that $\M_1 b_1 + \M_2 b_2 = 0$. 
It follows that for the vector $b$ given by
\[
b = 
\raisebox{-3ex}{  
\includegraphics{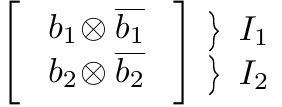}
}
\]
we have $[\Em{1} \mid \Em{2}]\:  b = 0$. 
Therefore, padding $b$ with zeros if necessary, we have found a non-trivial vector in the kernel of $\Mtilde_{[N, \pambda N]}$. By \eqref{ineq:lambda.pambda}, the condition \eqref{item:simple.iii} of \autoref{cor:my.bi-infinite} is satisfied, and the bi-infinite matrix $\Mtilde$ does not have a left inverse. Hence, the evaluation operator $\Phist$ is not stable, and the class $\StOPW(M)$ is not identifiable by $f$. 
\end{proof}

The case of a butterfly defective pattern is proven similarly. 
\begin{theorem}\label{thm:k33}
Let $M \subset \R^4$ have a precise symmetric  $(a,b,\Lambda)$-rectification such that $\Lambda$ contains a butterfly pattern on $\Gamma_1, \Gamma_2$ as defined in \autoref{defn:defective.families},  that is, $\abs{\Gamma_1} + \abs{\Gamma_2} \geq L+1, \Gamma_1 \cap \Gamma_2 = \varnothing$, and $(\Gamma_1 \times \Gamma_2) \cup (\Gamma_2 \times \Gamma_2) \subset \Lambda$. 
Assume also that $\abs{\Lambda} = \Aleph^2$ for some $\Aleph \in \N$.

The class $\StOPW(M)$ is not stochastically identifiable by any $f\in \Minf(\R)$, that is, for any $\eps>0$, there exists a operator $\H\in \StOPW(M)$ such that $\norm{\H f}_{\SMinf(\R)} <\eps \norm{\H}_{\StOPW(M).}$
\end{theorem}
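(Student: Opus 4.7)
The plan is to mirror the proof of \autoref{thm:k2k4}, with one carefully modified step to accommodate that the defective pattern now comes from a bipartite rather than two disjoint complete subgraph structure. The case $\Aleph > L$ is already covered by \autoref{thm:necessity}, so I reduce to $\Aleph \leq L$ and select the same parameters $\alpha = 1/a$, $\beta = 1/(b\Aleph)$ ensuring that $\Gscr$ is a frame, together with scalars $\lambda, \pambda$ satisfying
\[
\frac{\Aleph}{L+1} < (\pambda)^2 < \lambda^2 < \sqrt{\Aleph/L}.
\]
With these, \autoref{lem:Mtilde.is.unstable} gives the decay of $\Mtilde$ away from the $\lambda$-slanted diagonal required by \autoref{cor:my.bi-infinite}. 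What remains is to exhibit a growing sequence of singular $\pambda$-slanted principal minors $\Mtilde_{[N,\pambda N]}$ of $\Mtilde$, which by \autoref{thm:Phist.is.MM} will render $\Phist$ unstable.

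The key modification is to replace the two \enquote{diagonal} column submatrices $\Em{1} = \M_1 \otimes \conj{\M_1}$ and $\Em{2} = \M_2 \otimes \conj{\M_2}$ used in the two squares case by the two \enquote{off-diagonal} column submatrices
\[
\Em{12} = \M_1 \otimes \conj{\M_2}, \qquad \Em{21} = \M_2 \otimes \conj{\M_1},
\]
where $\M_i$ is defined as in \eqref{eq:Mu} with index set $\Gamma_i$, and the hypothesis $(\Gamma_1 \times \Gamma_2) \cup (\Gamma_2 \times \Gamma_1) \subset \Lambda$ guarantees that these columns of $\Mtilde_{[N,\pambda N]}$ are indeed present. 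The dimension-counting estimate is \emph{identical} to the one in the proof of \autoref{thm:k2k4}: for $N$ sufficiently large the column dimensions of $\M_1$ and $\M_2$ together exceed the ambient dimension $(2N+1)^2$ exactly when $(\pambda)^2 > \Aleph/(L+1)$, so their ranges must intersect nontrivially, yielding nonzero vectors $b_1, b_2$ with $\M_1 b_1 = \M_2 b_2 =: v$.

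From $v$ I build the kernel vector for $[\Em{12} \mid \Em{21}]$ by the tensor product computation
\begin{align*}
\Em{12}(b_1 \otimes \conj{b_2}) &= (\M_1 b_1) \otimes \conj{(\M_2 b_2)} = v \otimes \conj{v}, \\
\Em{21}(b_2 \otimes \conj{b_1}) &= (\M_2 b_2) \otimes \conj{(\M_1 b_1)} = v \otimes \conj{v},
\end{align*}
so that $(b_1 \otimes \conj{b_2}, \, -\, b_2 \otimes \conj{b_1})$ is a nonzero kernel vector. Zero-padding over the remaining columns of $\Mtilde_{[N,\pambda N]}$, whose index set sits inside $\Ltilde$ by construction, produces the desired nontrivial kernel of the $\pambda$-slanted principal minor. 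Since this can be done for arbitrarily large $N$, the hypothesis \eqref{item:simple.iii} of \autoref{cor:my.bi-infinite} is verified, and consequently $\Mtilde \cong (\MM)\eval_{\Ltilde}$ admits no bounded left inverse. \autoref{thm:Phist.is.MM} then yields the failure of the lower bound in \eqref{eq:id.3}, so no $f \in \Minf(\R)$ identifies $\StOPW(M)$.

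The only real obstacle is recognizing the correct tensor identity that links the \enquote{off-diagonal} submatrices $\Em{12}, \Em{21}$ to the same rank-intersection of $\M_1$ and $\M_2$ used in \autoref{thm:k2k4}; once that identity is in hand, every other ingredient (Gabor frame condition, slanted-diagonal decay, dimension counting, and the bi-infinite instability criterion) is reused verbatim from the two squares proof.
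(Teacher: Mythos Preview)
Your proposal is correct and follows essentially the same route as the paper's proof: the paper also reuses the construction of \autoref{thm:k2k4} verbatim, obtains $b_1,b_2$ with $\M_1 b_1 + \M_2 b_2 = 0$ by the identical dimension count, and then builds the kernel element of $\Mtilde_{[N,\pambda N]}$ as a block matrix $B$ supported on the off-diagonal blocks $(\Gamma_2\times\Gamma_1)\cup(\Gamma_1\times\Gamma_2)$, verifying $\Em{\text{full}}\Vec B = \M_2 b_2\otimes\conj{\M_1 b_1} - \M_1 b_1\otimes\conj{\M_2 b_2}=0$. Your formulation via $\Em{12}=\M_1\otimes\conj{\M_2}$, $\Em{21}=\M_2\otimes\conj{\M_1}$ and the pair $(b_1\otimes\conj{b_2},\,-b_2\otimes\conj{b_1})$ is the same construction up to a sign convention on $b_2$, and your correction of the typo in the hypothesis to $(\Gamma_1\times\Gamma_2)\cup(\Gamma_2\times\Gamma_1)\subset\Lambda$ is exactly what the argument requires.
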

\begin{proof}
Following the same construction as above, with the same $\lambda, \alpha$ and $\beta$, consider the same matrices $\Em{\text{full}}$ defined in \eqref{eq:Em.full}, and $\M_1, \M_2$, defined in \eqref{eq:Mu}, and the vectors $b_1 \in \Dom \M_1$ and $b_2 \in \Dom \M_2$ that satisfy $\M_1 b_1 + \M_2 b_2 =0$. 
The matrix $\Em{\text{full}}$ can easily be seen to be a tensor product $ \begin{bmatrix} \M_1  &  \M_2 \end{bmatrix} \otimes \conj{\begin{bmatrix} \M_1  &  \M_2 \end{bmatrix}}$. 
Consider a matrix supported on $\Gtilde{\Lambda} \subset \Gtilde{\text{full}}:$
\[ 
B = 
\raisebox{-6ex}{
\includegraphics{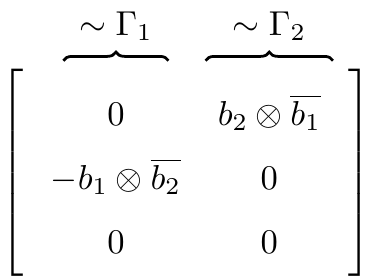}
}
\]
Then 
\begin{align*}
\Em{\text{full}} \: \Vec B 
&= \begin{bmatrix} \M_1  &  \M_2 \end{bmatrix} \otimes \conj{\begin{bmatrix} \M_1  &  \M_2 \end{bmatrix} }  \Vec B \\
&= \M_2 b_2 \otimes \conj{ \M_1 b_1} - \M_1 b_1 \otimes  \conj{\M_2 b_2} \\
&= 0. 
\end{align*}
Thus, the matrix $\Em{\text{full}}$ has a non-trivial vector $\Vec B$ in its kernel. Since $B$ is supported on $\Gtilde{\Lambda}$, the submatrix $\Em{\Lambda}$ is also singular, and hence, so is $\Mtilde_{[N, \pambda N]}$. 
By \autoref{cor:my.bi-infinite}, the bi-infinite matrix $\Mtilde$ does not have a left inverse. 
By \autoref{thm:Phist.is.MM}, the evaluation operator $\Phist$ is not stable, and the class $\StOPW(M)$ is not identifiable by $f$. 
\end{proof}

\section{Invertibility of bi-infinite matrices}\label{sec:bi-infinite}
\renewcommand{\d}{\:\mathrm{d}}

We generalize the invertibility \cite[Theorem 2.1]{Pfa05} to the case of different dimensions $d_1, d_2$ for the input and output spaces and allow slant $\lambda<1$. The proof largely follows the proof of \cite[Theorem 2.1]{Pfa05}.

Counting all the elements the $2d$ faces of the cube in $d$ dimensions with side $2K$, we denote  
\begin{equation}\label{eq:cube.f}
f(d, K) = \abs{\Set{z \in \Z^{d}}{\size{z} = K}}  = 2d \: (2K)^{d-1} = d \: 2^{d} \: K^{d-1}.
\end{equation}

\begin{theorem}\label{thm:my.bi-infinite}
Let $1\leq p_1, p_2 \leq \infty$, $\frac{1}{p_1} + \frac{1}{q_1} = 1$.  
Let $M = [M_{z_2, z_1}]: \ell^{p_1}(\Z^{d_1}) \to \ell^{p_2}(\Z^{d_2})$ be a complex bi-infinite matrix such that for some $\lambda> \pambda>0$, we have 
\begin{enumerate}[(i)]

\item \label{item:ii} the entries of $M$ decay away from the $\lambda$-slanted diagonal, that is, 
\begin{equation*} 
\abs{M_{z_2, z_1}} \leq w(\lambda \norm{z_2}_\infty - \norm{z_1}_\infty) \:  (1+\norm{z_1}_\infty)^{r_1} \: (1+\norm{z_2}_\infty)^{r_2},
\end{equation*}
for some decaying function $w = \littleo{\abs{x}^{-r}}$, where $r > \frac{d_1}{q_1} + \frac{d_2}{p_2} +  r_1 + r_2$, and the positive constants $r_1, r_2 \in \R_+$ satisfy
\[ 
r > \frac{d_1}{q_1},\text{ and } r > r_2 - \frac{1}{q_1} + \frac{d_1}{q_1} + \frac{d_2}{p_2},
\]
\item \label{item:iii} and for any $N_0 \in \N$ there exists $N_2 > N_0$ such that the submatrix based on a $\lambda'$-slanted diagonal
\[ 
\Mtilde= \brackets[\Big]{M_{z_2, z_1} \colon \size{z_1} \leq \pambda N_2,  \ \size{z_2} \leq N_2},
\]
is singular.

\end{enumerate}
then $M$ has no bounded left inverses. 
In fact, for any $\eps>0$, there exists a compactly supported vector $x\in\ell^{p_1}(\Z^{d_1})$ such that $\norm{x}_{p_1} = 1$ and $\norm{M x}_{p_2}<\eps$. 
\end{theorem}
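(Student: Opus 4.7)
The plan is to follow the strategy of \cite[Theorem 2.1]{Pfa05}, adapted to accommodate distinct domain and codomain dimensions $d_1, d_2$ and slant ratios $\pambda/\lambda$ that may lie in $(0,1)$. Given $\eps>0$, the goal is to construct a compactly supported unit vector $x\in\ell^{p_1}(\Z^{d_1})$ with $\norm{Mx}_{\ell^{p_2}(\Z^{d_2})}<\eps$, which immediately precludes the existence of a bounded left inverse.

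\textbf{Step 1 (kernel extraction).} Fix a threshold $N_0$ to be determined once the tail estimate is in hand. Invoke hypothesis (ii) to obtain $N_2>N_0$ for which the $\pambda$-slanted principal minor $\Mtilde$ (rows indexed by $\size{z_2}\leq N_2$, columns by $\size{z_1}\leq \pambda N_2$) is singular. Select any non-zero $v$ in its kernel, normalize so that $\norm{v}_{p_1}=1$, and extend by zero to a compactly supported $x\in\ell^{p_1}(\Z^{d_1})$. Then $\norm{x}_{p_1}=1$ and $(Mx)_{z_2}=0$ for all $\size{z_2}\leq N_2$, so only the tail $\size{z_2}>N_2$ contributes to $\norm{Mx}_{p_2}$.

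\textbf{Step 2 (row bound via H\"older and shell decomposition).} For $K=\size{z_2}>N_2$, H\"older's inequality in the column index, combined with the decay hypothesis (ii), yields
\[
\abs{(Mx)_{z_2}}\leq (1+K)^{r_2}\Bigl(\sum_{\size{z_1}\leq \pambda N_2}(1+\size{z_1})^{r_1 q_1}\,w(\lambda K-\size{z_1})^{q_1}\Bigr)^{1/q_1}.
\]
Partitioning the inner sum over the shells $\{\size{z_1}=k\}$, each of cardinality $f(d_1,k)$ from \eqref{eq:cube.f}, and exploiting that $\lambda K-k\geq(\lambda-\pambda)K>0$ throughout the range of summation, the rapid decay $w(x)=\littleo{\abs{x}^{-r}}$ controls the inner sum by a quantity of order $\littleo{K^{d_1/q_1+r_1-r}}$.

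\textbf{Step 3 (summing the tail).} Organizing the sum over $z_2$ into shells of cardinality $f(d_2,K)$ and feeding in the per-row estimate of Step~2 gives
\[
\norm{Mx}_{p_2}^{p_2} = \sum_{K>N_2}f(d_2,K)\,\abs{(Mx)_{z_2}}^{p_2} = \littleo*{\sum_{K>N_2}K^{d_2-1+p_2 r_2+p_2(d_1/q_1+r_1-r)}}.
\]
The principal inequality $r>\tfrac{d_1}{q_1}+\tfrac{d_2}{p_2}+r_1+r_2$ from (ii) is precisely what forces the exponent of $K$ to be strictly less than $-1$, so the tail tends to zero as $N_2\to\infty$. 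The two auxiliary constraints on $r$ in (ii) handle the boundary cases $p_2=\infty$ and $q_1=\infty$ where the Jensen/H\"older pairing needs to be modified (absorbing a row supremum instead of an $\ell^{q_1}$ norm). Choosing $N_0$ sufficiently large that this tail falls below $\eps^{p_2}$ completes the argument.

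\textbf{Main obstacle.} The delicate point is Step~3: propagating the rapid-decay factor $w^{p_2/q_1}$ through the two nested shell decompositions of distinct dimensions $d_1$ and $d_2$ while keeping track of the polynomial weights $(1+\size{z_i})^{r_i}$. The numerology of the exponent constraints on $r$ must match exactly, otherwise either the Hölder step loses summability over $z_1$ (needs $r>d_1/q_1+r_1$, roughly) or the outer shell sum over $K$ diverges. Some care is also required to ensure that the singular minors produced by (iii) can be chosen with $N_2$ arbitrarily large, so that the tail-of-order-$N_2$ estimate can indeed be driven below~$\eps$.
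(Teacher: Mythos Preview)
Your proposal is correct and follows the same overall strategy as the paper: extract a kernel vector from the singular $\pambda$-slanted minor, pad with zeros, use H\"older for the row estimate, decompose into shells, and sum the tail to obtain the exponent condition $r>\tfrac{d_1}{q_1}+\tfrac{d_2}{p_2}+r_1+r_2$.

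The execution differs in one respect worth noting. You bound the inner sum crudely by using $\lambda K-\size{z_1}\geq(\lambda-\pambda)K$ uniformly and then absorbing $N_2\leq K$, which immediately gives the $K^{d_1/q_1+r_1-r}$ control. The paper instead keeps the $N_1$-dependence explicit, passes to integrals, and invokes a separate convexity estimate (\autoref{lem:convex}) to bound $(\lambda(x-1)-N_1-1)^{-r}-(\lambda(x-1))^{-r}$; this is where the two auxiliary constraints $r>d_1/q_1$ and $r>r_2-\tfrac{1}{q_1}+\tfrac{d_1}{q_1}+\tfrac{d_2}{p_2}$ enter, as positivity conditions on the intermediate exponents $s$ and $B$. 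Your coarser bound bypasses that lemma entirely and needs only the principal inequality, so your remark that the auxiliary constraints ``handle the boundary cases $p_2=\infty$ and $q_1=\infty$'' is not quite the right explanation---in the paper they arise from the integral-comparison route, and under $r_1,r_2\geq 0$ they are in fact implied by the main inequality. Your argument is slightly more elementary; the paper's is sharper in tracking the $N_1$-dependence but lands on the same final exponent.
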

\begin{figure}[ht]
\centering
\includegraphics{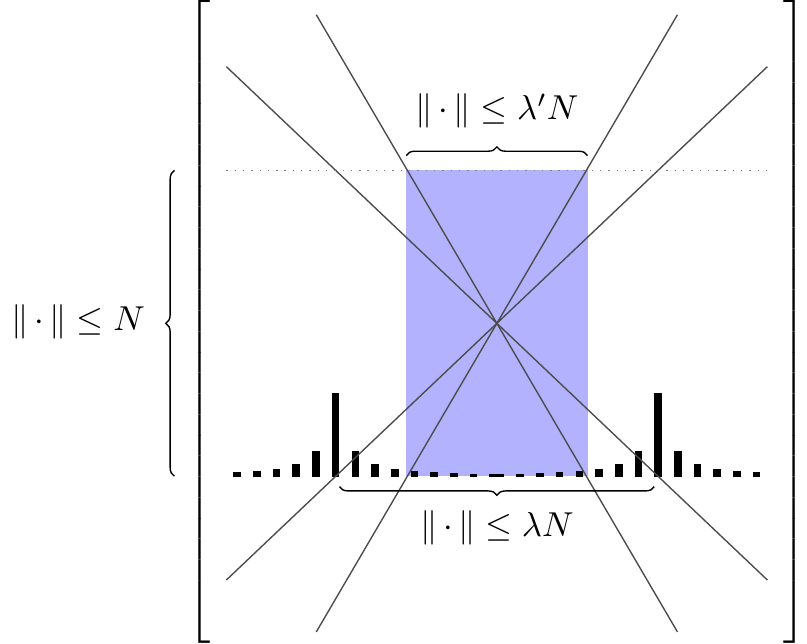}
\caption{A $\pambda$-slanted principal minor $M_{[N, \pambda N]}$ of a matrix exhibiting a decay away from the $\lambda$-slanted diagonal. } 
\end{figure}

\begin{proof}
Fix some $N_2$ to be chosen later, and $N_1 = \floor{\pambda N_2}$ in such a way that a submatrix of $M$ given in \eqref{item:iii} has a non-trivial kernel. 
Let a vector $\tilde{x}$ be such that $\norm{\tilde{x}}_{p_1} = 1$, and $\Mtilde{\tilde{x}} = 0$. 
We define $x \in \ell^{p_1}(\Z^{d_1})$ by padding with zeros: $x_{z_1} = \begin{dcases}
\tilde{x}_{z_1}, &\size{z_1}\leq N_1,\\
0, &\size{z_1} > N_1. 
\end{dcases}$
\begin{figure}[ht]
\centering
\includegraphics{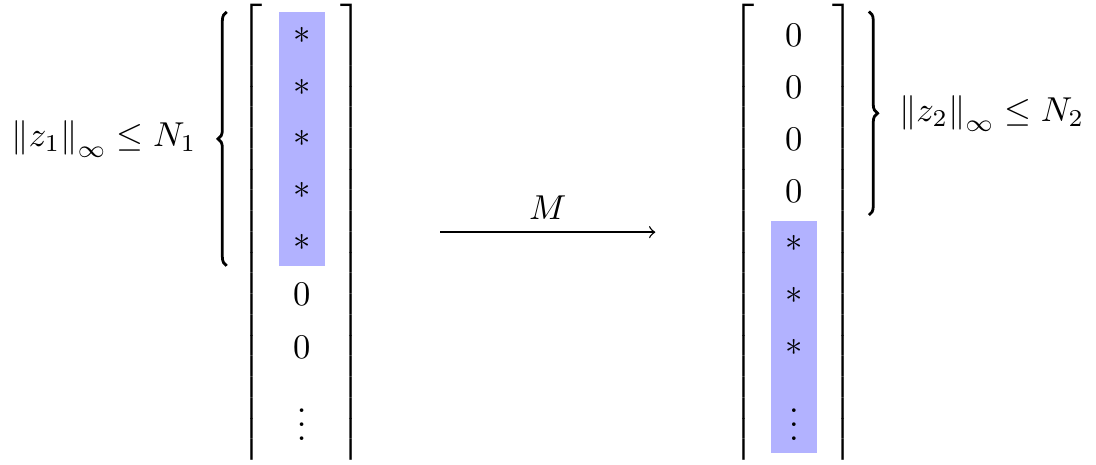}
\caption{The positions of non-zero components in $x$ and $Mx$.}
\end{figure}

Clearly, $(Mx)_{z_2} = 0$ for all $z_2$ such that $\norm{z_2}_\infty \leq N_2$. We now estimate $(Mx)_{z_2}$ for all $\norm{z_2} > N_2$. 
Fix such a $z_2$. 
\begin{align*}
\abs{(Mx)_{z_2}}^{q_1} &= \abs{\sum_{\size{z_1}\leq N_1} M_{z_2, z_1} x_{z_1}}^{q_1} \\
&\leq \norm{x}_{p_1}^{q_1} \sum_{\size{z_1}\leq N_1} \abs{M_{z_2, z_1}}^{q_1} \\
&\leq \coeff{z_2}^{q_1 r_2} \sum_{\size{z_1}\leq N_1 } \coeff{z_1}^{q_1 r_1} \: w(\underbrace{\lambda \size{z_2} - \size{z_1}}_{k_1})^{q_1} \\
&\leq \coeff{z_2}^{q_1 r_2} \: (N_1+1)^{q_1 r_1}  \sum_{\substack{\lambda \size{z_2}-N_1 \leq k_1 \\ k_1 \leq \lambda \size{z_2}}} \: w(k_1)^{q_1} \: f(d_1, k_1).
\end{align*}
where for each $\size{z_1}$ the number  of summands is $f(d_1, k_1)$, $f$ given in \eqref{eq:cube.f}. 
We can now  compute 
\begin{equation}\label{eq:Mxp2}\begin{split}
\norm{Mx}_{p_2}^{p_2} &= \sum_{\size{z_2}>N_2} \abs{(Mx)_{z_2}}^{p_2} \\
&\leq \sum_{\size{z_2}>N_2} \coeff{z_2}^{p_2 r_2} \:  (N_1+1)^{p_2 r_1}  \\
&\qquad \times \paren*{ \sum_{\lambda \size{z_2}-N_1 \leq k_1 \leq \lambda \size{z_2}}  \: w(k_1)^{q_1} \: f(d_1,k_1)  } ^{\frac{p_2}{q_1}} \\
&\leq d_2 \: 2^{d_2} \:   (N_1+1)^{p_2 r_1} \:  (\alpha \: d_1 \: 2^{d_1}  )^\frac{p_2}{q_1}  \\
&\qquad \times \sum_{k_2>N_2} (k_2+1)^{p_2 r_2 + d_2 - 1} \: \paren[\Big]{ \sum_{\lambda k_2 -N_1 \leq k_1\leq \lambda k_2}  \: w(k_1)^{q_1} \: k_1^{d_1-1} }^{\frac{p_2}{q_1}},
\end{split}\end{equation}
 where we denoted $k_2 = \size{z_2}$ and used the definition of $f$ in \eqref{eq:cube.f}.
Consider $\lambda k_2 - k_1 > \lambda N_2 -  \pambda N_2 > 1$ for $N_2 > N_0 = 2\ceiling{(\lambda-\pambda)^{-1}}$. We estimate the sums with the integrals, letting $x = k_2+1$ and $y = k_1$.
\begin{align*}
\MoveEqLeft \sum_{k_2>N_2} (k_2+1)^{p_2 r_2 + d_2 - 1} \: \paren[\Big]{ \sum_{\lambda k_2 -N_1 \leq k_1\leq \lambda k_2}  \: w(k_1)^{q_1} \: k_1^{d_1-1}  } ^{\frac{p_2}{q_1}}  \\
&\leq \int_{N_2 + 1}^\infty x^{p_2 r_2 + d_2 - 1} \: \paren[\Big]{ \int_{\lambda (x-1) - N_1-1}^{\lambda (x-1)}   \: w(y)^{q_1} \: y^{d_1-1}  \d y} ^{\frac{p_2}{q_1}}  \d x \\
\shortintertext{ let $w(y) = v(y)\: y^{-r}$ such that $v(y) \in C_0(\R)$ and $s = q_1 r - d_1 >0$  }
&\leq \int_{N_2 + 1}^\infty x^{p_2 r_2 + d_2 - 1} \: \paren[\Big]{ \int_{\lambda (x-1) - N_1-1}^{\lambda (x-1)}   \: v(y)^{q_1} \: y^{-s-1}  \d y} ^{\frac{p_2}{q_1}}  \d x \\
&\leq  \norm{v}_\infty^{p_2} \int_{N_2+1}^\infty x^{p_2 r_2 + d_2 -1} \paren[\Big]{ \int_{\lambda (x-1) - N_1-1}^{\lambda (x-1)} y^{-s-1}  \d y}^{\frac{p_2}{q_1}} \d x \\
&\leq \norm{v}_\infty^{p_2} \int_{N_2+1}^\infty x^{p_2 r_2 + d_2 -1} \paren[\Big]{ \frac{1}{s} \: y^{-s}\eval_{\lambda (x-1)}^{\lambda (x-1) - N_1-1}}^{\frac{p_2}{q_1}} \d x.
\intertext{Apply \autoref{lem:convex}, let $C = C(v, \lambda, \pambda, r, p_2, q_1, d_1, d_2)$, possibly different on different lines, and let $B = -p_2 r_2 - d_2  +  \frac{p_2}{q_1}\paren*{s+1}$, also guaranteed to be positive by \eqref{item:ii}.  
Simplify }
&\leq C \: \int_{N_2+1}^\infty x^{p_2 r_2 + d_2 -1} \paren[\Big]{ (N_1+1)\: x^{-s-1}   }^{\frac{p_2}{q_1}} \d x \\
&= C\: (N_1+1)^{\frac{p_2}{q_1}}  \int_{N_2+1}^\infty x^{-B-1}  \d x \\
&= C\: (N_1+1)^{\frac{p_2}{q_1}}  (N_2+1)^{-B}.
\end{align*}
We remember $N_1 = \floor{\pambda N_2}$ and finally return to estimate \eqref{eq:Mxp2}
\begin{align*}
\norm{M x}_{p_2} = C (N_1+1)^{r_1 + \frac{1}{q_1}} (N_2+1)^{-\frac{B}{p_2}} 
\leq  C \:  (N_2+1)^{r_1 + \frac{1}{q_1} -\frac{B}{p_2}},
\end{align*}
where we used again that $N_2>N_0$. For any fixed $\eps$ we can now find $N_2$ large enough so that $\norm{Mx}_{p_2} <\eps$, because by \eqref{item:ii},
\[ 
r_1 + \frac{1}{q_1} -\frac{B}{p_2} = r_1 + r_2 + \frac{d_2}{p_2} + \frac{d_1}{q_1} - r  < 0. 
\]
\end{proof}

\begin{lemma}\label{lem:convex}
Let $N_2 \geq N_0 = 2\ceiling{(\lambda-\pambda)^{-1}}$ and $N_1 = \floor{\pambda N_2}$ with $\pambda < \lambda$. For $x\geq N_2+1$, we estimate 
\[
(\lambda(x-1)-N_1-1)^{-r} - (\lambda(x-1))^{-r} \leq C(\lambda, \pambda, r) \: x^{-r-1} \: (N_1+1).
\]
\end{lemma}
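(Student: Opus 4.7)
The plan is to apply the mean value theorem to the function $f(t) = t^{-r}$, which has derivative $f'(t) = -r t^{-r-1}$. Setting $a = \lambda(x-1) - N_1 - 1$ and $b = \lambda(x-1)$, the left-hand side becomes $f(a) - f(b) = r\xi^{-r-1}(b-a) = r\xi^{-r-1}(N_1+1)$ for some $\xi \in (a,b)$. Since $f'$ is decreasing in magnitude as $t$ increases, $\xi^{-r-1} \leq a^{-r-1}$, so it suffices to show that $a \gtrsim x$ with a constant depending only on $\lambda - \pambda$.

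First I would verify that $a > 0$ so the estimate is meaningful. Since $N_1 = \floor{\pambda N_2} \leq \pambda N_2$ and $x \geq N_2 + 1$,
\[
a = \lambda(x-1) - N_1 - 1 \geq \lambda N_2 - \pambda N_2 - 1 = (\lambda - \pambda)N_2 - 1 \geq 1,
\]
by the hypothesis $N_2 \geq 2\ceiling{(\lambda-\pambda)^{-1}}$.

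Next I would get the sharper lower bound $a \gtrsim x$. Writing $a = \lambda x - \lambda - N_1 - 1$ and using $N_1 \leq \pambda N_2 \leq \pambda(x-1)$, one finds
\[
a \geq \lambda x - \lambda - \pambda(x-1) - 1 = (\lambda - \pambda)x - 1.
\]
Since $x \geq N_2 + 1 \geq 2(\lambda - \pambda)^{-1} + 1$, the term $(\lambda - \pambda)x \geq 2$, so $(\lambda - \pambda)x - 1 \geq \tfrac{1}{2}(\lambda - \pambda)x$, giving $a \geq \tfrac{\lambda - \pambda}{2}\, x$.

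Combining these estimates yields
\[
f(a) - f(b) \leq r\, a^{-r-1}(N_1+1) \leq r\paren*{\tfrac{2}{\lambda - \pambda}}^{r+1} x^{-r-1}(N_1+1),
\]
which is the claimed bound with $C(\lambda, \pambda, r) = r\paren*{\tfrac{2}{\lambda-\pambda}}^{r+1}$. No step here is a real obstacle; the only subtlety is ensuring $a > 0$ so that the MVT applies on a domain where $t^{-r}$ is smooth, which is precisely what the hypothesis $N_2 \geq 2\ceiling{(\lambda - \pambda)^{-1}}$ is calibrated to guarantee.
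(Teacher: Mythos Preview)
Your mean-value-theorem argument is correct and arguably cleaner than the paper's. One small arithmetic slip: the simplification
\[
\lambda x - \lambda - \pambda(x-1) - 1 = (\lambda-\pambda)x - 1
\]
is off; the left side equals $(\lambda-\pambda)(x-1) - 1$, not $(\lambda-\pambda)x - 1$. This does not damage the argument: since $(\lambda-\pambda)(x-1)\geq 2$ by the hypothesis on $N_2$, you still get $a \geq \tfrac12(\lambda-\pambda)(x-1)$, and then $(x-1)\geq \tfrac{N_0}{N_0+1}\,x$ absorbs into the constant $C(\lambda,\pambda,r)$.

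The paper's proof takes a different route. It factors out $a^{-r}$ (with their $a=\lambda(x-1)$, opposite to your labeling) to write the difference as $a^{-r}\bigl((1-t)^{-r}-1\bigr)$ with $t=(N_1+1)/a$, shows $t$ stays in a fixed interval $[0,t_0]$ with $t_0<1$, and then uses convexity of $g(t)=(1-t)^{-r}-1$ together with $g(0)=0$ to bound $g(t)\leq \tfrac{g(t_0)}{t_0}\,t$. Your MVT bound $r\xi^{-r-1}(N_1+1)$ is more direct and gives an explicit constant $r\bigl(\tfrac{2}{\lambda-\pambda}\bigr)^{r+1}$ (up to the fix above), whereas the paper's constant involves the less explicit $g(t_0)/t_0$. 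Both approaches ultimately reduce to the same observation that the lower endpoint $\lambda(x-1)-N_1-1$ is bounded below by a fixed multiple of $x$, which is exactly what the calibration $N_2\geq 2\ceiling{(\lambda-\pambda)^{-1}}$ is there to ensure.
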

\begin{proof}
Denote for brevity $a = \lambda(x-1) \geq \lambda N_2$ and $b = N_1+1$. Let $t = b/a$.
Observe that for $N>N_0 = 2\ceiling{(\lambda-\pambda)^{-1}} > \frac{2}{\lambda-\pambda}$, 
\[ 
t = \frac{b}{a}  \leq \frac{\pambda N_2 + 1}{\lambda N_2}  = \frac{\pambda}{\lambda} + \frac{1}{\lambda N_2} \leq \frac{\pambda}{\lambda} + \frac{\lambda-\pambda}{2\lambda} = 
\frac{\lambda+\pambda}{2\lambda} \coloneqq t_0 < 1. 
\]
Consider the convex function $f(t) = (1-t)^{-r}-1$ for $t\in [0, 1)$. Since $f(0)=0$, we can estimate 
\[ 
f(t) \leq \frac{f(t_0)}{t_0} t = C(\lambda, \pambda) \: t \quad \text{ for all } t \in [0,t_0]. 
\]
We can now bound the original quantity 
\[
(a-b)^{-r} - a^{-r} = a^{-r} \paren{ \paren*{1-t}^{-r} - 1} \leq a^{-r} \: C \: t = C' \: (x-1)^{-r-1} \: (N_1+1). 
\]
It remains to observe that for $x\geq N_2+1$, we have $(x-1)^{-r-1} \leq x^{-r-1}\paren*{\frac{x-1}{x}}^{-r-1}$, and 
\[
\paren*{\frac{x-1}{x}}^{-r-1} \leq \paren*{1 - \frac{1}{x}}^{-r-1} \leq \paren*{1- \frac{1}{N_2}}^{-r-1} \leq \paren*{\frac{\lambda+\pambda}{2}}^{-r-1} = C(\lambda, \pambda, r),
\]
which can be also absorbed into the constant. 
\end{proof}
\bibliographystyle{elsarticle-num}
\bibliography{PfaZh02}
\end{document}